\setlist[itemize,1]{leftmargin=0.8in}
\DeclareMathOperator*{\argmax}{arg\,max}
\DeclareMathOperator*{\argmin}{arg\,min}
\newcommand{\bx}{{\mathbf{x}}}
\newtheorem{thm}{Theorem}[section]
\newtheorem{lem}[thm]{Lemma}
\theoremstyle{remark}
\newtheorem{rem}[thm]{Remark}
\newcommand{\ie}{{\it i.e.}}
\journal{}
\begin{document}

\begin{frontmatter}



\title{Iterative Thresholding Methods for Longest Minimal Length Partitions}


\author[a]{Shilong Hu}
\ead{shilonghu@link.cuhk.edu.cn}
\author[b]{Hao Liu}
\ead{haoliu@hkbu.edu.hk}

\author[a,c]{Dong Wang}
\ead{wangdong@cuhk.edu.cn}
\address[a]{School of Science and Engineering, The Chinese University of Hong Kong, Shenzhen, Guangdong 518172, China}
\address[b]{Department of Mathematics, Hong Kong Baptist University, Kowloon Tong, Hong Kong}
\address[c]{Shenzhen International Center for Industrial and Applied Mathematics, Shenzhen Research Institute of Big Data, Guangdong 518172, China}

\begin{abstract}
In this paper, we introduce two iterative methods for longest minimal length partition problem, which asks whether the disc (ball) is the set maximizing the total perimeter of the shortest partition that divides the total region into sub-regions with given volume proportions, under a volume constraint. The objective functional is approximated by a short-time heat flow using indicator functions of regions and Gaussian convolution. The problem is then represented as a constrained max-min optimization problem. Auction dynamics is used to find the shortest partition in a fixed region, and threshold dynamics is used to update the region. Numerical experiments in two-dimensional and three-dimensional cases are shown with different numbers of partitions, unequal volume proportions, and different initial shapes. The results of both methods are consistent with the conjecture that the disc in two dimensions and the ball in three dimensions are the solution of the longest minimal length partition problem. 
\end{abstract}



\begin{keyword}
threshold dynamics \sep auction dynamics \sep fencing problem \sep longest minimal length partitions \sep volume constrained partitions
\MSC 49M20 \sep 49Q05 \sep 49M05 \sep 41A30
\end{keyword}

\end{frontmatter}


\section{Introduction}
\label{sec:intro}
The fencing problem has a long history and is rooted in the division of land using fences. One of the earliest studies in this field can be traced back to \cite{auerbach1938probleme}, which investigated the scenario of a cylinder floating in water. This problem was further explored in subsequent studies such as \cite{radziszewski1956cordes,eggleston1961maximal,goodey2006area}, in which one considers dividing a convex set into two equal-area sets and studies the optimal arc length of the fence.

In the 1950s, P{\'o}lya raised a question in \cite{polya1958aufgabe}: Among all convex sets in $\mathbb{R}^2$ with a fixed area, which one has the longest shortest arc that bisects the area? This question remained unresolved throughout the last century.  According to \cite{croft1991unsolved}, Santal{\'o} made a conjecture that the disc is the answer. Until 2012, the conjecture was proven in \cite{Esposito2012}. A similar conjecture in high dimensions was proposed in \cite{wichiramala2007efficient} and \cite{morgan2010} after P{\'o}lya. It considers the case of dividing a region in $\mathbb{R}^d$ space into two sub-regions of arbitrary given volume proportions, and asks whether the least length of arc to divide a ball is greater than the least length of arc to divide any other convex body of the same volume. The new conjecture generalizes the bisection problem to arbitrary volume constraints. In the literature, some analytical results have been presented in \cite{berry2017convex} and \cite{wang2024note}, which offer partial proofs for this conjecture. As of now, the conjecture remains unanswered for the case of three-dimensional space and for large fraction areas in the two-dimensional case (except for bisection).


In \cite{bogosel2022longest}, the authors extend the conjecture even further. They investigate whether the disc in two dimensions and the ball in three dimensions still provide the longest minimal length partitions when the region is divided into more than two parts. The mathematical description of it is given below.

Given $\Omega\subset \mathbb{R}^{d}$ as an open, connected region with Lipschitz boundary and the number of partition $n>1$, $(\omega_{1},\dots,\omega_{n})$ is said to be a partition of $\Omega$ if $\bigcup_{i=1}^{n} \omega_{i}=\Omega$ and $\omega_{i} \cap \omega_{j}=\emptyset$ for any $1\leq i < j\leq n$. Given a vector $\mathbf{c}=(c_1,\dots,c_n)\in \mathbb{R}^{n}$ with $c_{i}>0$ and $\sum_{i=1}^{n}c_i=1$, the shortest partition of $\Omega$ with volume constraint $\mathbf{c}$ is defined by
\begin{align}
     SP(\Omega,\mathbf{c})=\argmin_{(\omega_i)_{i=1}^n}\{\sum_{i=1}^{n}\text{Per}_{\Omega}(\omega_i):(\omega_i)_{i=1}^n\text{ is a partition of }\Omega,\ |\omega_i|=c_i|\Omega|\},\nonumber
\end{align}
where $|\cdot|$ is the Lebesgue measure, and $\text{Per}_{\Omega}(\omega_i)$ is the relative perimeter of $\omega_i$ on $\Omega$.
The relative isoperimetric profile of the partition is defined as
\begin{align}
    PI(\Omega,\mathbf{c})=\min\{\sum_{i=1}^{n}\text{Per}_{\Omega}(\omega_i):(\omega_i)_{i=1}^n\text{ is a partition of }\Omega,\ |\omega_i|=c_i|\Omega|\}.\nonumber
\end{align}
Then the longest minimal length partitions problem is formulated by
\begin{align}
    \max_{|\Omega|=V} PI(\Omega,\mathbf{c}).\label{longest_minimal_length_partitions}
\end{align}
When $n=2$, problem \eqref{longest_minimal_length_partitions} degenerates into the original fencing problem. If we further set the dimension $d=2$ and $\mathbf{c}=(0.5, 0.5)$, the problem is exactly the same one as P{\'o}lya raised.

\begin{figure}[t!]
    \centering
    \subfigure[disc]{\includegraphics[width=0.24\textwidth, clip, trim = 4cm 2.5cm 3cm 1.5cm]{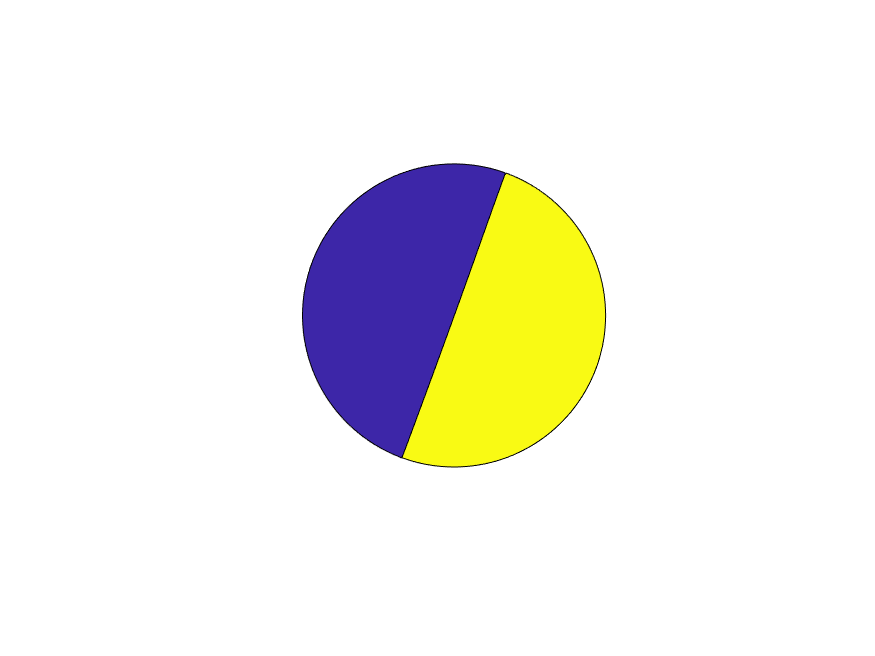}}
    \subfigure[square]{\includegraphics[width=0.24\textwidth, clip, trim = 4cm 2.5cm 3cm 1.5cm]{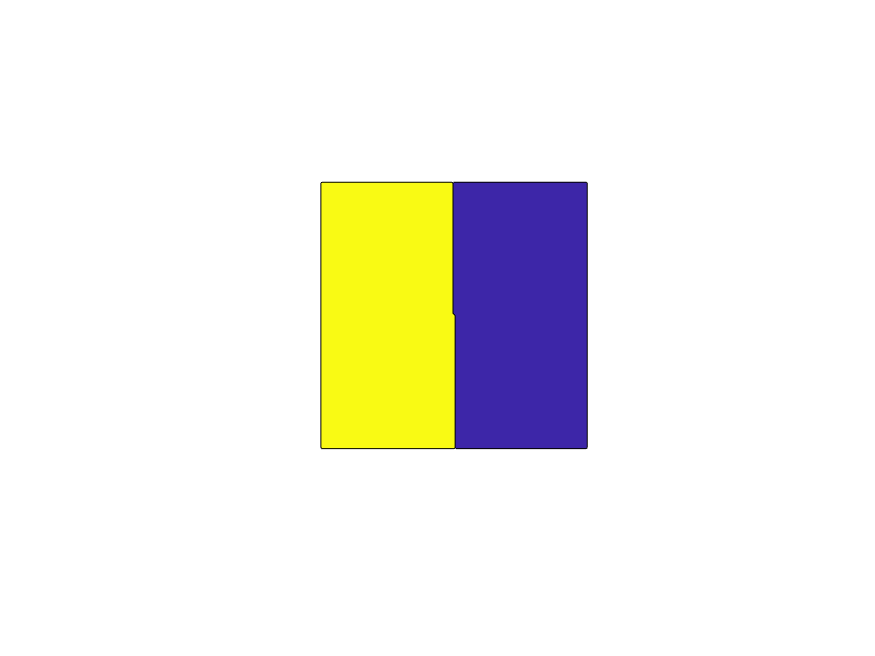}}
    \subfigure[triangle]{\includegraphics[width=0.24\textwidth, clip, trim = 2cm 1.5cm 4cm 2cm]{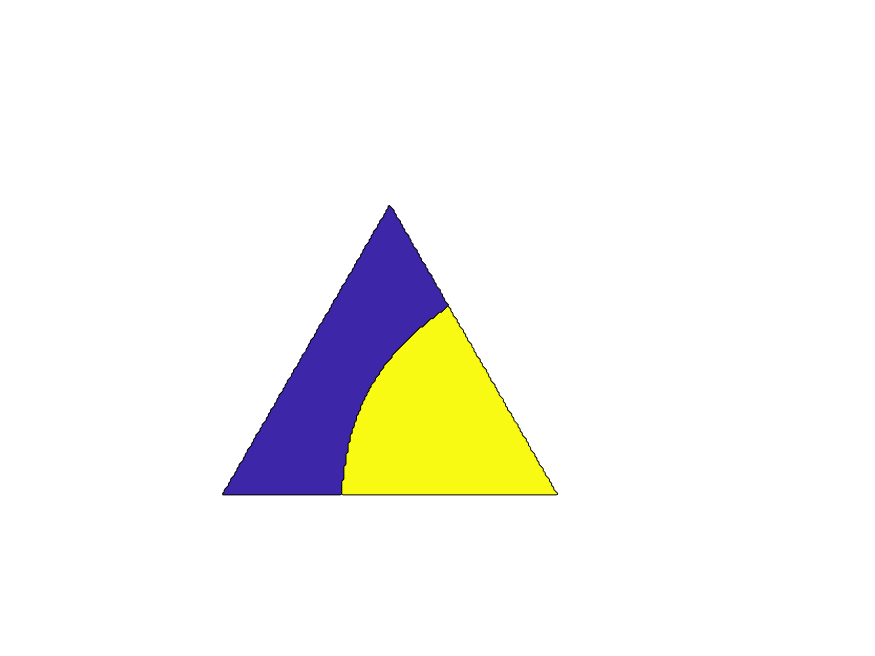}}
    \subfigure[flower]{\includegraphics[width=0.24\textwidth, clip, trim = 3cm 2cm 2cm 1.5cm]{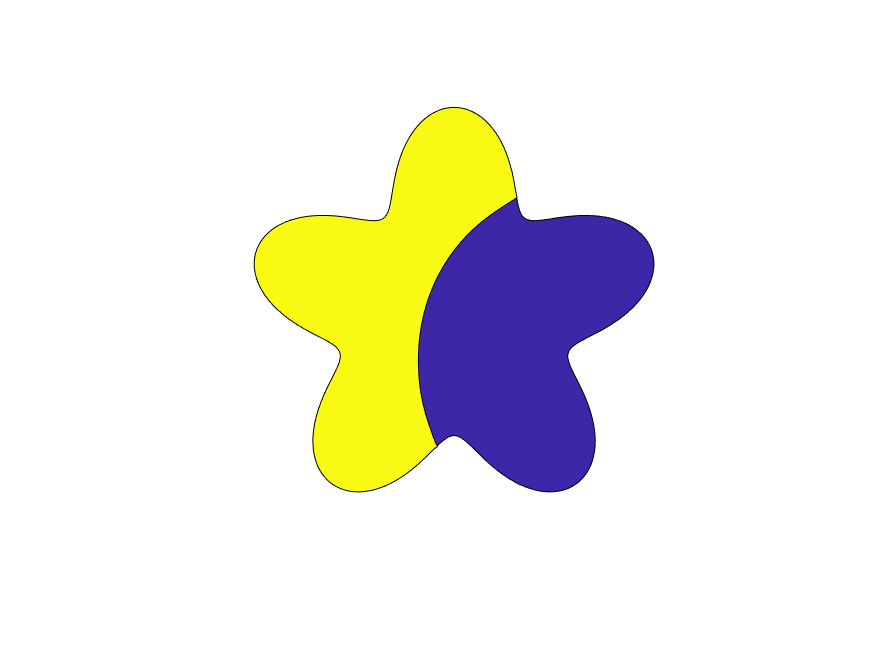}}
    \caption{The shortest partitions of different $\Omega$ in P{\'o}lya's question. The partitions are computed numerically by auction dynamics in Section \ref{sec:method}.}
    \label{fig:Intro}
\end{figure}

The relative perimeter here is defined by using the total variation. Suppose $\chi_{\omega}$ is the indicator function of $\omega$, then 
\begin{align*}
    \text{Per}_{\Omega}(\omega)=TV(\chi_{\omega},\Omega),
\end{align*}
where
\begin{align*}
    TV(u,\Omega)=\sup \left\{ \int_{\Omega}u \text{div} g : g\in C_{c}(\Omega,\mathbb{R}^d),||g||_{\infty}\leq 1\right\}
\end{align*}
is the total variation of $u$ on $\Omega$, and $C_{c}(\Omega,\mathbb{R}^d)$ means the space of $C^{\infty}$ vector-valued functions on $\Omega$ with compact support. Intuitively speaking, $\text{Per}_{\Omega}(\omega)=|\partial\omega \backslash \partial\Omega|$, and $\sum_{i=1}^{n}\text{Per}_{\Omega}(\omega_i)$ in $PI(\Omega,\mathbf{c})$ is $\frac{1}{2}(\sum_{i=1}^{n}|\partial\omega_i|-|\partial\Omega|)$. More details about the relative perimeter can be found in \cite{braides1998approximation}.

In \cite{bogosel2022longest}, the authors propose a phase-field method based on shape derivatives to numerically simulate the longest minimal length partitions. The method involves using a Modica-Mortola approximation to represent the perimeter, and the total region is parameterized using a radial function discretized with Fourier coefficients. The authors update the total region by iteratively modifying the Fourier coefficients based on shape derivatives. To maintain the volume constraint, they project the Fourier coefficients using a homothety at each iteration step. The authors claim that the results of their method provide numerical evidence for the conjecture that for any fixed $\mathbf{c}$, the disk is always the maximizer in two dimensions, and the ball is always the maximizer in three dimensions. 

In \cite{miranda2007short} the authors present a short-time heat flow approximation for the boundary integral energy of functions with bounded variation. This approximation utilizes indicator functions and Gaussian convolution and does not require taking derivatives of the functions. In \cite{merriman1992diffusion, MBO1993, merriman1994motion}, Merriman, Bence, and Osher (MBO) developed a threshold dynamics method for interface motion. The method has been proved to converge to mean curvature flow in \cite{evans1993convergence,barles1995simple}. Esedoglu and Otto further extended this method to the multi-phase case in \cite{esedoḡ2015threshold}. The method can be applied to general multiphase problems with arbitrary surface tensions. The MBO method and its idea have gained recognition for simplicity and stability, and have been widely applied in various areas such as motion by mean curvature for filaments \cite{ruuth2001diffusion}, volume-preserving interface motion \cite{ruuth2003simple,jacobs2018auction}, image processing \cite{esedog2006threshold,wang2017efficient,wang2022iterative,tai2007image,tai2005remark}, graph data processing \cite{merkurjev2013mbo,merkurjev2015global,boyd2018simplified}, foam bubbles \cite{wang2019dynamics}, grain boundary motion \cite{elsey2009diffusion} and wetting phenomena on solid surfaces \cite{xu2017efficient}. Recently, connections among the MBO method, operator-splitting methods and deep neural networks are pointed out in \cite{liu2023double,liu2023connections,tai2024pottsmgnet}.

Numerous algorithms and rigorous error analyses have been developed to refine and expand upon the original MBO method \cite{merriman1994motion,ruuth1998efficient_a,ishii2005optimal}. For instance, in \cite{jacobs2018auction}, the authors extend the MBO method to multiphase volume-constrained curvature motion using an auction dynamics scheme. This extension allows for solving problems related to minimal surface area with multiphase volume constraints.

In this paper, two new iterative methods are introduced for longest minimal length partitions. The objective functional is approximated by a short time heat flow. Then, the method of threshold dynamics is used to maximize $PI(\Omega,\mathbf{c})$ under volume constraints, and auction dynamics is used to find $SP(\Omega,\mathbf{c})$ for a fixed $\Omega$.

The rest of this paper is organized as follows. Section \ref{sec:appro} introduces approximations for the objective functional. These approximations serve as the foundation for the subsequent methods. Section \ref{sec:method} presents the derivation of the two-step methods based on the introduced approximations. In Section \ref{sec:exper}, we present numerical experiments in two and three dimensions to demonstrate the performance of the proposed methods. We propose in Section\ref{sec:monotone} an objective functional-monotone method. We then draw conclusions and discussions in Section \ref{sec:conclusion}.

\section{Approximations of the objective functional}
\label{sec:appro}

Let $\Omega\subset \mathbb{R}^d$ be an open and connected set to be partitioned. We define the objective functional (the length of the partition of $\Omega$) by 
\begin{align}
    E\left(\Omega,(\omega_i)_{i=1}^{n}\right)=\sum_{i=1}^{n}\text{Per}_{\Omega}(\omega_i). \label{exact energy}
\end{align}
Given a volume constraint $V>0$ and partition proportions $\mathbf{c}=(c_1,\dots,c_n)$, the relative isoperimetric profile is written as
\begin{align*}
    PI(\Omega,\mathbf{c})=\min_{|\omega_i|=c_i V}  E\left(\Omega,(\omega_i)_{i=1}^{n}\right).
\end{align*}
The longest minimal length problem can be formulated as the following max-min optimization problem
\begin{align}
    \label{original optimization}
    \max_{|\Omega|=V} PI(\Omega,\mathbf{c})=\max_{|\Omega|=V}\min_{|\omega_i|=c_i V}  E\left(\Omega,(\omega_i)_{i=1}^{n}\right).
\end{align}

To approximate the objective functional, we use indicator functions to implicitly represent the regions $\Omega$ and $(\omega_i)_{i=1}^{n}$. Define  $u_{\Omega}$ and $(u_{i})_{i=1}^{n}$ by
\begin{align}
    u_{\Omega}(\mathbf{x})=\begin{cases}
        1 & \mbox{ if } \mathbf{x}\in \Omega,\\
        0 & \mbox{ if } \mathbf{x}\notin \Omega,
    \end{cases}
    \quad 
    u_i(\mathbf{x})=\begin{cases}
        1 & \mbox{ if } \mathbf{x}\in \omega_i,\\
        0 & \mbox{ if } \mathbf{x}\notin \omega_i,
    \end{cases}
    \mbox{ for } i=1,...,n.
\end{align}
The volume constraints in (\ref{original optimization}) can be expressed as $\int_{\mathbb{R}^{d}}u_{\Omega}\mathrm{d}\mathbf{x}=V$ and $\int_{\mathbb{R}^{d}}u_{i}\mathrm{d}\mathbf{x}=c_{i}V$. Since $(\omega_i)_{i=1}^n$ is a partition of $\Omega$, we also have $\sum_{i=1}^n u_i =u_{\Omega}$.

To approximate the objective functional, instead of using a Modica-Mortola approximation as in \cite{bogosel2022longest}, we use indicator functions and Gaussian convolution. Note that $\text{Per}_{\Omega}(\omega_i)=\sum_{j=1}^{n}\text{Per}_{\omega_i\cup\omega_j}(\omega_i)$, and $\text{Per}_{\omega_i\cup \omega_i}(\omega_i)=\text{Per}_{\omega_i}(\omega_i)=0$. For any $i \neq j$, we have $\text{Per}_{\omega_i\cup\omega_j}(\omega_j)=\text{Per}_{\omega_i\cup\omega_j}(\omega_i)$, which can be approximated by 
\begin{align}
    \text{Per}_{\omega_i\cup\omega_j}(\omega_j)\approx \sqrt{\frac{\pi}{\tau}}\int_{\mathbb{R}^{d}} u_i \left( G_{\tau}\ast u_j\right) \mathrm{d}\mathbf{x}, 
    \label{appro:relative perimeter}
\end{align}
where $$G_{\tau}(\mathbf{x})=\frac{1}{(4\pi\tau)^{d/2}}\exp\left(-\frac{|\mathbf{x}|^2}{4\tau}\right)$$ is the Gaussian kernel and $\ast$ denotes convolution in $\mathbb{R}^{d}$. The convergence of this approximation for closed curves has been shown in \cite{miranda2007short}  as $\tau$ goes to $0^{+}$ and the multi-phase formula was proposed in \cite{esedoḡ2015threshold}.
By utilizing (\ref{appro:relative perimeter}), we approximate $E\left(\Omega,(\omega_i)_{i=1}^{n}\right)$ in \eqref{exact energy} by
\begin{align}
    E\left(\Omega,(\omega_i)_{i=1}^{n}\right)&=\sum_{i=1}^{n}\sum_{j\neq i}\text{Per}_{\omega_j\cup\omega_i}(\omega_i)\nonumber\\
    &\approx \sqrt{\frac{\pi}{\tau}}\sum_{i=1}^{n}\sum_{j\neq i}\int_{\mathbb{R}^{d}} u_i \left( G_{\tau}\ast u_j\right) \mathrm{d}\mathbf{x}\label{appro:intermediate 1}\\
    &=\sqrt{\frac{\pi}{\tau}}\sum_{i=1}^{n}\int_{\mathbb{R}^{d}} u_i \left( G_{\tau}\ast \left(\sum_{j\neq i}u_j\right)\right) \mathrm{d}\mathbf{x}\nonumber\\
    &=\sqrt{\frac{\pi}{\tau}}\sum_{i=1}^{n}\int_{\mathbb{R}^{d}} u_i \left( G_{\tau}\ast \left(u_{\Omega}-u_i\right)\right) \mathrm{d}\mathbf{x}\nonumber\\
    &=\sqrt{\frac{\pi}{\tau}}\sum_{i=1}^{n}\int_{\mathbb{R}^{d}} (G_{\tau/2}\ast u_i) \left( G_{\tau/2}\ast \left(u_{\Omega}-u_i\right)\right) \mathrm{d}\mathbf{x}. \label{appro:intermediate 2}
\end{align} 
Since $\sqrt{\frac{\pi}{\tau}}(G_{\tau/2}\ast u_i) \left( G_{\tau/2}\ast \left(u_{\Omega}-u_i\right)\right)$ concentrates on $\Omega$ as $\tau$ goes to $0^+$, \ie, 
\begin{align*}
    \lim_{\tau\rightarrow 0^+}\sqrt{\frac{\pi}{\tau}}\int_{\mathbb{R}^{d}\setminus\Omega} (G_{\tau/2}\ast u_i) \left( G_{\tau/2}\ast \left(u_{\Omega}-u_i\right)\right) \mathrm{d}\mathbf{x}=0,
\end{align*}
when $\tau$ is sufficiently small, the objective functional will not change too much if we replace the integrating domain in (\ref{appro:intermediate 2}) from $\mathbb{R}^d$ to $\Omega$. To achieve this, recall that $u_{\Omega}$ is the indicator function of $\Omega$, we multiply the integrand in (\ref{appro:intermediate 2}) by $u_{\Omega}$. Thus the objective functional in (\ref{appro:intermediate 2}) is further approximated by
\begin{align}
    \eqref{appro:intermediate 2}\approx& \sqrt{\frac{\pi}{\tau}}\sum_{i=1}^{n}\int_{\mathbb{R}^{d}} u_{\Omega}(G_{\tau/2}\ast u_i) \left( G_{\tau/2}\ast \left(u_{\Omega}-u_i\right)\right) \mathrm{d}\mathbf{x}\nonumber\\
    =&\sqrt{\frac{\pi}{\tau}}\int_{\mathbb{R}^{d}} u_{\Omega}\left(S_{\tau/2}-\sum_{i=1}^{n}(G_{\tau/2}\ast u_i)^2\right)\mathrm{d}\mathbf{x}\nonumber\\&-\sqrt{\frac{\pi}{\tau}}\int_{\mathbb{R}^{d}} u_{\Omega}S_{\tau/2} \left( G_{\tau/2}\ast (1-u_{\Omega})\right) \mathrm{d}\mathbf{x}, \label{appendix_1}
\end{align}
where $S_{\tau/2}$ denotes $S_{\tau/2}(\mathbf{x}):=G_{\tau/2}\ast \left(\sum_{i=1}^{n}u_i\right)(\mathbf{x})$.
By \cite{wang2021efficient}, in the sense of $\tau$ going to $0^+$, the last quadratic term can be approximated by
\begin{align*}
&\sqrt{\frac{\pi}{\tau}}\int_{\mathbb{R}^{d}} u_{\Omega}S_{\tau/2} \left( G_{\tau/2}\ast (1-u_{\Omega})\right) \mathrm{d}\mathbf{x} \\
\approx& \sqrt{\frac{\pi}{\tau}}\int_{\mathbb{R}^{d}} u_{\Omega}S_{\tau/2}^{\frac{1}{2}} \left( G_{\tau/2}\ast \left(S_{\tau/2}^{\frac{1}{2}}(1-u_{\Omega})\right)\right) \mathrm{d}\mathbf{x}.
\end{align*}

Denote the approximate objective functional by
\begin{align}
    \label{approximate energy 2}
    \Tilde{E}_{\tau}(u_{\Omega},(u_i)_{i=1}^n):=\sqrt{\frac{\pi}{\tau}}\int_{\mathbb{R}^{d}} u_{\Omega}\left(S_{\tau/2}-\sum_{i=1}^{n}(G_{\tau/2}\ast u_i)^2\right)\mathrm{d}\mathbf{x}\nonumber\\
    -\sqrt{\frac{\pi}{\tau}}\int_{\mathbb{R}^{d}} u_{\Omega}S_{\tau/2}^{\frac{1}{2}} \left( G_{\tau/2}\ast \left(S_{\tau/2}^{\frac{1}{2}}(1-u_{\Omega})\right)\right) \mathrm{d}\mathbf{x}.
\end{align}
The original problem \eqref{original optimization} is approximated by
\begin{align}
    \label{modified optimization 1}
    \max_{\substack{u_\Omega \in \mathcal{B} \\ \int_{\mathbb{R}^d} u_{\Omega}\mathrm{d}\mathbf{x}=V}}
    \min_{\substack{u_i \in \mathcal{B} \\ \int_{\mathbb{R}^d} u_{i}\mathrm{d}\mathbf{x}=c_i V \\ \sum u_i = u_{\Omega}}}
    \Tilde{E}_{\tau}(u_{\Omega},(u_i)_{i=1}^n),
\end{align}
where $\mathcal{B}:=\{u\in BV(\Omega,\mathbb{R})| u=\{0,1\}\}$, and $BV(\Omega,\mathbb{R})$ denotes the bounded-variation functional space.

Note that \eqref{appro:intermediate 1} can also be used for approximation, leading to another approximation of the objective functional
\begin{align}
    \label{approximate energy 1}
     \Hat{E}_{\tau}(u_{\Omega},(u_i)_{i=1}^n):=\sqrt{\frac{\pi}{\tau}}\sum_{i=1}^{n}\sum_{j\neq i}\int_{\mathbb{R}^{d}} u_i \left( G_{\tau}\ast u_j\right) \mathrm{d}\mathbf{x}.
\end{align}
The corresponding modified problem is 
\begin{align}
    \label{modified optimization 2}
    \max_{\substack{u_\Omega \in \mathcal{B} \\ \int_{\mathbb{R}^d} u_{\Omega}\mathrm{d}\mathbf{x}=V}}
    \min_{\substack{u_i \in \mathcal{B} \\ \int_{\mathbb{R}^d} u_{i}\mathrm{d}\mathbf{x}=c_i V \\ \sum u_i = u_{\Omega}}}
    \Hat{E}_{\tau}(u_{\Omega},(u_i)_{i=1}^n).
\end{align}
Here, $\Tilde{E}_{\tau}$ and  $\Hat{E}_{\tau}$ are equivalent to each other in the sense of $\tau$ converging to $0^+$, and hence are \eqref{modified optimization 1} and \eqref{modified optimization 2}.

\section{Derivation of the methods}
\label{sec:method}

Based on the two objective functional approximations in (\ref{approximate energy 2}) and (\ref{approximate energy 1}), in this section, two iterative methods are derived for longest minimal length partitions. Denote the indicator functions of the total region and the shortest partition in $k$-th iteration by $u_{\Omega}^{k}$ and $(u_{i}^{k})_{i=1}^n$ respectively. Each method contains two steps:
\begin{itemize}
    \item[Step 1:] Fix $(u_{i}^{k})_{i=1}^{n}$ and find $u_{\Omega}^{k+1}$ by maximizing $\widetilde{E}_{\tau}$.
    \item[Step 2:] Fix $u_{\Omega}^{k+1}$ and find $(u_{i}^{k+1})_{i=1}^{n}$ by minimizing $\widehat{E}_{\tau}$.
\end{itemize}
    
In Step 2, due to the good form of $\widehat{E}_{\tau}$, auction dynamics \cite{jacobs2018auction} can be directly employed in both methods. The $(u_{i}^{k})_{i=1}^{n}$ given by the two methods preserve $\sum_{i=1}^{n}u_{i}^{k}=u_{\Omega}^{k}$ to ensure that the collection of subsets is a partition of the total region. In Step 1, the update is based on the objective functional $\widetilde{E}_{\tau}$ and threshold dynamics is used. The constraint $\sum_{i=1}^{n}u_{i}=u_{\Omega}$ is relaxed in this part to update the total region. Otherwise, if $u_{\Omega}^{k+1}=\sum_{i=1}^{n}u_{i}^{k}=u_{\Omega}^{k}$, the total region would remain unchanged during iterations. To maintain the stability of the method, in the first method, the update of $u_{\Omega}^{k+1}$ is fully leaded by the behavior of $\Tilde{E}_{\tau}$, while $(u_i^{k+1})_{i=1}^n$ are updated by running auction dynamics several times to avoid local minimizer of $\Tilde{E}_{\tau}$. In the second method, a proximal term is introduced to control the evolution of $u_{\Omega}$ and only a one-time auction dynamics is used to updated $(u_i^{k+1})_{i=1}^n$, which enhance both the stability and efficiency. 

\subsection{The first method}

\subsubsection{Update of \texorpdfstring{$u_{\Omega}^{k+1}$}{}}

In this step, we find $u_{\Omega}^{k+1}$ based on $(u_{i}^{k})_{i=1}^n$.
In the update of the total region, the approximate objective functional $\Tilde{E}_{\tau}$ (defined in \eqref{approximate energy 2})  serves as a guidance. Fixing $(u_{i}^{k})_{i=1}^n$, we update $u_{\Omega}^{k+1}$ by
\begin{align}
    \label{update Omega: 1}
    u_{\Omega}^{k+1} = \argmax_{\substack{u_{\Omega} \in \mathcal{B} \\ \int_{\mathbb{R}^d} u_{\Omega}\mathrm{d}\mathbf{x}=V}} 
    \Tilde{E}_{\tau}(u_{\Omega},(u_{i}^{k})_{i=1}^n).
\end{align}

To solve \eqref{update Omega: 1}, we relax the constraint $u_{\Omega} \in \{0,1\}$ to $u_{\Omega}\in [0,1]$. Specifically, we define $\mathcal{K}:=\{u\in BV(\Omega,\mathbb{R})| u\in[0,1] \}$ and consider the following problem instead:
\begin{align}
    \label{update Omega: 1.2}
    u_{\Omega}^{k+1} = \argmax_{\substack{u_{\Omega} \in \mathcal{K} \\\int_{\mathbb{R}^d} u_{\Omega}\mathrm{d}\mathbf{x}=V}}
    \Tilde{E}_{\tau}(u_{\Omega},(u_{i}^{k})_{i=1}^n).
\end{align}
The following lemma shows that (\ref{update Omega: 1}) and (\ref{update Omega: 1.2}) have the same maximizer.
\begin{lem}\label{lem:equivalence}
   The problems \eqref{update Omega: 1} and \eqref{update Omega: 1.2} have the same maximizer, \ie, 
    \begin{align*}
        \argmax_{\substack{u_{\Omega} \in \mathcal{B} \\ \int_{\mathbb{R}^d} u_{\Omega}\mathrm{d}\mathbf{x}=V}} 
        \Tilde{E}_{\tau}(u_{\Omega},(u_{i}^{k})_{i=1}^n)=\argmax_{\substack{u_{\Omega} \in \mathcal{K} \\ \int_{\mathbb{R}^d} u_{\Omega}\mathrm{d}\mathbf{x}=V}}  \Tilde{E}_{\tau}(u_{\Omega},(u_{i}^{k})_{i=1}^n).
    \end{align*}
\end{lem}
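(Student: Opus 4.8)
The plan is to fix the partition $(u_i^k)_{i=1}^n$ and regard $\widetilde{E}_\tau(\,\cdot\,,(u_i^k)_{i=1}^n)$ as a functional of the single variable $u_\Omega$, show that it is \emph{convex} in $u_\Omega$, and then exploit the fact that a convex functional maximized over a convex set is driven to an extreme point; the extreme points of the relaxed feasible set turn out to be exactly the binary functions in $\mathcal{B}$. First I would separate $\widetilde{E}_\tau$ from \eqref{approximate energy 2} into its linear and quadratic parts in $u_\Omega$. Writing $a:=S_{\tau/2}^{1/2}$, which is fixed once $(u_i^k)_{i=1}^n$ is fixed, and expanding $1-u_\Omega$ in the second integral, the integrand $u_\Omega\, a\,(G_{\tau/2}\ast(a(1-u_\Omega)))$ splits into a piece affine in $u_\Omega$ and the piece $u_\Omega\, a\,(G_{\tau/2}\ast(a\,u_\Omega))$. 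Collecting terms yields
\begin{align*}
    \widetilde{E}_\tau(u_\Omega,(u_i^k)_{i=1}^n) = L(u_\Omega) + \sqrt{\tfrac{\pi}{\tau}}\int_{\mathbb{R}^d} (a\,u_\Omega)\big(G_{\tau/2}\ast(a\,u_\Omega)\big)\,\mathrm{d}\mathbf{x},
\end{align*}
where $L$ is affine in $u_\Omega$ and the remaining integral is a quadratic form in $u_\Omega$.

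The key step is then the sign of the quadratic form $Q(g):=\int_{\mathbb{R}^d} g\,(G_{\tau/2}\ast g)\,\mathrm{d}\mathbf{x}$. Since the Fourier transform of the Gaussian $G_{\tau/2}$ is everywhere positive, Plancherel's identity gives $Q(g)=\int \widehat{G_{\tau/2}}\,|\widehat{g}|^2 \ge 0$, so $Q$ is positive semidefinite and $u_\Omega \mapsto Q(a\,u_\Omega)$ is convex (it is the composition of the linear map $u_\Omega\mapsto a\,u_\Omega$ with a convex quadratic). Consequently $\widetilde{E}_\tau(\,\cdot\,,(u_i^k)_{i=1}^n)$ is affine plus convex, hence a convex functional of $u_\Omega$.

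To finish, I would note that the feasible set $\mathcal{K}_V := \{u_\Omega \in \mathcal{K} : \int_{\mathbb{R}^d} u_\Omega\,\mathrm{d}\mathbf{x} = V\}$ is convex and weak-$*$ compact in $L^\infty$, and that $\widetilde{E}_\tau$ is weak-$*$ continuous on it: Gaussian convolution sends a bounded family to an equicontinuous one, so it is compact and turns the weak convergence inside the quadratic term into strong convergence. By Bauer's maximum principle, a convex and continuous functional on a compact convex set attains its maximum at an extreme point. A standard bathtub-type characterization identifies the extreme points of $\{0\le u_\Omega\le 1,\ \int u_\Omega = V\}$ with the indicator functions of measurable sets of volume $V$, i.e.\ precisely the elements of $\mathcal{B}$ with the prescribed volume. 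Hence the maximum of the relaxed problem \eqref{update Omega: 1.2} is attained inside $\mathcal{B}$; since $\mathcal{B}\cap\{\int u_\Omega=V\}\subseteq\mathcal{K}_V$, the two maximal values coincide and any such extreme maximizer of \eqref{update Omega: 1.2} is simultaneously a maximizer of \eqref{update Omega: 1}.

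The hard part will be pinning down the sign of the quadratic term, namely verifying that the relaxed functional is convex rather than concave in $u_\Omega$, since convexity is exactly what forces the maximizer onto an extreme (binary) point; the positive-definiteness of the Gaussian kernel is what makes this work. Once convexity is established, the extreme-point characterization and the weak-$*$ compactness/continuity needed for Bauer's principle are routine, so I expect no further serious obstacle.
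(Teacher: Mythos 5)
Your proposal is correct in substance and rests on the same key fact as the paper's proof: with $(u_i^k)_{i=1}^n$ frozen, $\widetilde{E}_{\tau}(\cdot,(u_i^k)_{i=1}^n)$ is an affine functional of $u_\Omega$ plus the quadratic form $\sqrt{\pi/\tau}\int (a\,u_\Omega)\bigl(G_{\tau/2}\ast(a\,u_\Omega)\bigr)\,\mathrm{d}\mathbf{x}$ with $a=S_{\tau/2}^{1/2}$, and that form is positive semidefinite, so the relaxed functional is convex in $u_\Omega$. The differences are in the packaging. For positivity, the paper uses the semigroup identity $G_{\tau/2}=G_{\tau/4}\ast G_{\tau/4}$ to write the form as $\int\bigl(G_{\tau/4}\ast g\bigr)^2\,\mathrm{d}\mathbf{x}$, while you use Plancherel and the positivity of the Gaussian's Fourier transform; these are equivalent. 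For the conclusion, the paper argues by a direct perturbation: if a maximizer $u^{\star}$ over $\mathcal{K}$ took values in $(c,1-c)$ on a set $A$ of positive measure, the volume-preserving variation $u^t=u^{\star}+t\chi_{A_1}-t\chi_{A_2}$ remains feasible for small $|t|$ and $\frac{\mathrm{d}^2}{\mathrm{d}t^2}\widetilde{E}_{\tau}(u^t,\cdot)>0$, contradicting maximality at $t=0$; you instead invoke weak-$*$ compactness and continuity, Bauer's maximum principle, and the bathtub characterization of the extreme points of $\{0\le u_\Omega\le 1,\ \int u_\Omega=V\}$. Your route has the merit of addressing existence of a maximizer, which the paper tacitly assumes. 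Its one shortfall relative to the statement is that Bauer's principle only yields that \emph{some} maximizer is an extreme point, i.e., that the two optimal values coincide, whereas the lemma asserts equality of the $\argmax$ sets; to conclude that \emph{every} relaxed maximizer is binary you still need strict positivity of the quadratic form along nonzero feasible perturbation directions (which holds because $a=S_{\tau/2}^{1/2}>0$ everywhere and Gaussian convolution is injective) --- precisely the paper's perturbation step. Adding that observation closes the gap.
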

\begin{proof}
    Note that 
    \begin{align*}
    \max_{\substack{u_{\Omega} \in \mathcal{B} \\ \int_{\mathbb{R}^d} u_{\Omega}\mathrm{d}\mathbf{x}=V }} \Tilde{E}_{\tau}(u_{\Omega},(u_{i}^{k})_{i=1}^n)\leq\max_{\substack{
    u_{\Omega} \in \mathcal{K} \\
    \int_{\mathbb{R}^d} u_{\Omega}\mathrm{d}\mathbf{x}=V 
    }} \Tilde{E}_{\tau}(u_{\Omega},(u_{i}^{k})_{i=1}^n),
    \end{align*}
    since $\mathcal{B}\subset \mathcal{K}$. To prove the equivalence, it suffices to show
    \begin{align*}
        \argmax_{\substack{u_{\Omega} \in \mathcal{K} \\ \int_{\mathbb{R}^d} u_{\Omega}\mathrm{d}\mathbf{x}=V }} \Tilde{E}_{\tau}(u_{\Omega},(u_{i}^{k})_{i=1}^n)\in\left\{u_{\Omega} \in \mathcal{B}:\int_{\mathbb{R}^d} u_{\Omega}\mathrm{d}\mathbf{x}=V\right\}.
    \end{align*}
    Assume this is not true and the maximizer is $u^{\star}$. Then there exists a set $A\subset\Omega$ with $|A|>0$ and $0<c<\frac{1}{2}$ such that
    \begin{align*}
        u^{\star}(\mathbf{x})\in(c,1-c), \text{ for any } \mathbf{x}\in\Omega.
    \end{align*}
    Divide $A$ into two subsets $A_1$ and $A_2$ such that $A_1\cup A_2=A$, $A_1\cap A_2 =\emptyset$ and $|A_1|=|A_2|=\frac{1}{2}|A|$. Define $u^t = u^{\star}+t\chi_{A_1}-t\chi_{A_2}$ where $\chi_{A_1}$ and $\chi_{A_2}$ are indicator functions of $A_1$ and $A_2$ respectively. Then $u^t\in\{u_{\Omega} \in \mathcal{K}:\int u_{\Omega}\mathrm{d}\mathbf{x}=V\}$ for any $|t|<\frac{c}{2}$. Observe that
    \begin{align*}
        &\frac{\mathrm{d}^2}{\mathrm{d}t^2}\Tilde{E}_{\tau}(u^{t},(u_{i}^{k})_{i=1}^n)\\=&2\sqrt{\frac{\pi}{\tau}}\int_{\mathbb{R}^{d}} (\chi_{A_1}-\chi_{A_2})(S_{\tau/2}^{k})^{\frac{1}{2}} \left(G_{\tau/2}\ast \left((\chi_{A_1}-\chi_{A_2})(S_{\tau/2}^{k})^{\frac{1}{2}}\right)\right) \mathrm{d}\mathbf{x}\\
        =&2\sqrt{\frac{\pi}{\tau}}\int_{\mathbb{R}^{d}}\left(G_{\tau/4}\ast \left((\chi_{A_1}-\chi_{A_2})(S_{\tau/2}^{k})^{\frac{1}{2}}\right)\right)^{2} \mathrm{d}\mathbf{x}\\
        >&0,
    \end{align*}
    where $S_{\tau/2}^{k}$ denotes $S_{\tau/2}^{k}(\mathbf{x})=G_{\tau/2}\ast \left(\sum_{i=1}^{n}u_{i}^{k}\right)(\mathbf{x})$.
    Particularly, when taking $t=0$, $u^0=u^{\star}$, $\frac{\mathrm{d}^2}{\mathrm{d}t^2}\Tilde{E}_{\tau}(u^{\star},(u_{i}^{k})_{i=1}^n)>0$, which contradicts with that $u^{\star}$ is the maximizer. Therefore, the maximizer is in $\mathcal{B}$ and this ends the proof.
\end{proof}

By Lemma \ref{lem:equivalence}, solving (\ref{update Omega: 1}) is converted to solving (\ref{update Omega: 1.2}), for which we follow \cite{esedoḡ2015threshold} to linearize $\Tilde{E}_{\tau}(u_{\Omega},(u_{i}^{k})_{i=1}^n)$ as
\begin{align*}
    L_{\tau}(u_{\Omega},u_{\Omega}^{k},(u_{i}^{k})_{i=1}^n)=\int_{\mathbb{R}^{d}}u_{\Omega}\phi^{k}\mathrm{d}\mathbf{x},
\end{align*}
where the dominant function is
\begin{align}
    \phi^{k}(\mathbf{x})=\sqrt{\frac{\pi}{\tau}}\left(S_{\tau/2}^{k}-\sum_{i=1}^{n}\left(G_{\tau/2}\ast u_{i}^{k}\right)^{2}+(S_{\tau/2}^{k})^{\frac{1}{2}}G_{\tau/2}\ast \left((S_{\tau/2}^{k})^{\frac{1}{2}}\left(2u_{\Omega}^{k}-1\right)\right)\right).\label{dominant function1}
\end{align}
By threshold dynamics, $u_{\Omega}^{k+1}$ is updated by maximizing the linearized objective functional under the volume constraint
\begin{align*}
    u_{\Omega}^{k+1}=\argmax_{\int_{\mathbb{R}^d} u_{\Omega}\mathrm{d}\mathbf{x}=V} L_{\tau}(u_{\Omega},u_{\Omega}^{k},(u_{i}^{k})_{i=1}^n).
\end{align*}
Let $\sigma>0$ such that
\begin{align}
    \int_{\mathbb{R}^{d}} \chi_{\sigma}(\mathbf{x})\mathrm{d}\mathbf{x}=V,\label{eq:volume preserving}
\end{align}
where $\chi_{\sigma}(\mathbf{x})$ is the indicator function of the set $\{\mathbf{x}\in\mathbb{R}^{d}:\phi^{k}(\mathbf{x})\geq\sigma\}$. It is easy to see that the maximizer $u_{\Omega}^{k+1}$ is determined point-wisely by
\begin{align}
    u_{\Omega}^{k+1}(\mathbf{x})=
    \begin{cases}
        1 & \mbox{ if } \phi^{k}(\mathbf{x})\geq\sigma, \\
        0 & \mbox{ if } \phi^{k}(\mathbf{x})<\sigma,
    \end{cases}
    \label{update total}
\end{align}
which is exactly $u_{\Omega}^{k+1}(\mathbf{x})=\chi_{\sigma}(\mathbf{x})$. 

\begin{rem}
    In \cite{bogosel2022longest}, during the update of Fourier coefficients of the radial function which represents the total region, the volume constraint is broken. Then the authors project the updated coefficients onto the volume constraint using a homothety. For the methods in this paper, volume preserving is kept in the way of \eqref{eq:volume preserving} and \eqref{update total}. For regular discretization, such a $\sigma$ is easy to find. Besides, since $\sigma$ is a threshold of the dominant function, it makes sure that the update maximizes the approximate objective functional in this time step under the volume constraint.
\end{rem}

\begin{figure}[t!]
    \centering
    \subfigure[initial]{\includegraphics[width=0.3\textwidth, clip, trim = 4cm 2.5cm 3cm 1.5cm]{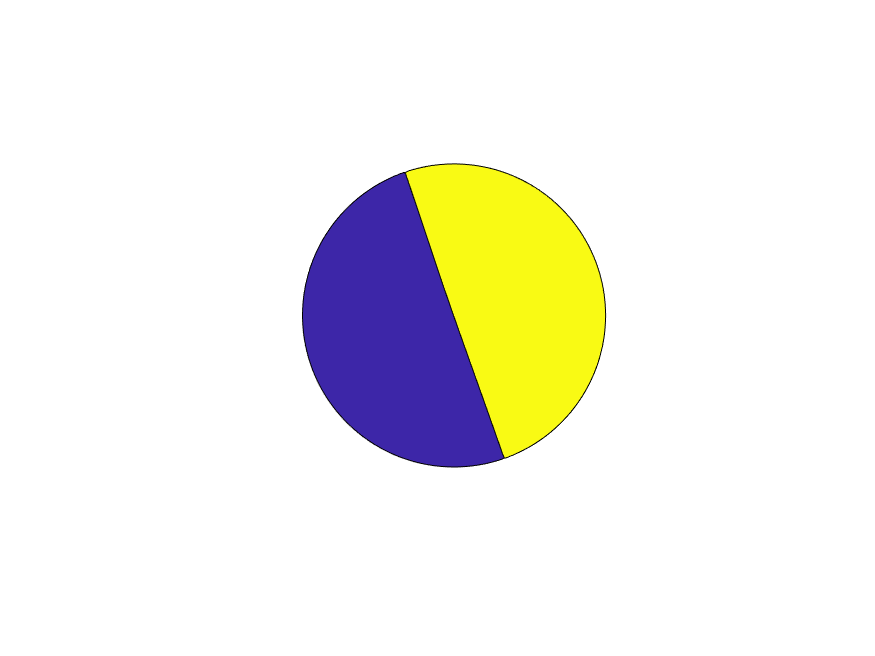}}
    \subfigure[after one iteration]{\includegraphics[width=0.3\textwidth, clip, trim = 4cm 2.5cm 3cm 1.5cm]{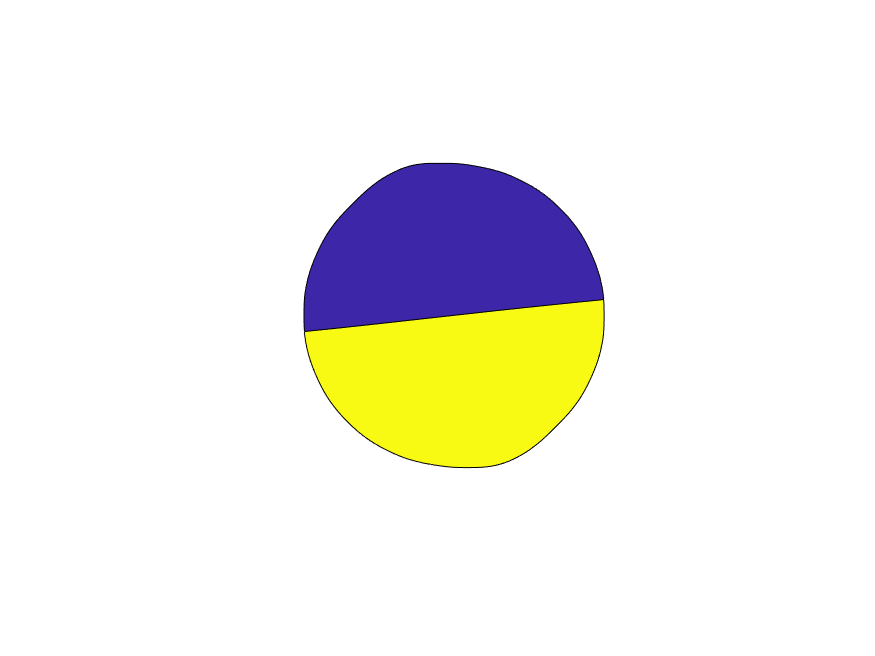}}
    \caption{The initial condition is a disc. After one iteration by \eqref{update total}, the total region changes instead of remaining round. }
    \label{fig:method}
\end{figure}

In fact, if we just update the total region by \eqref{update total} in each iteration, $u_{\Omega}^{k+1}$ will not converge to a stationary point, and the iterations will not stop. This is because the update of $u_{\Omega}^{k+1}$ depends on $(u_{i}^{k})_{i=1}^{n}$, which represents a shortest partition of $u_{\Omega}^{k}$ but may not be unique. For example, each diameter of a disc gives a shortest partition in the two-dimensional bisection case, and there are infinity many diameters. When we update the total region, $u_{\Omega}^{k+1}$ differs from $u_{\Omega}^{k}$ in the direction of increasing the objective functional of $(u_{i}^{k})_{i=1}^{n}$. However, this may decrease the objective functional of other shortest partitions and eventually decrease the objective function we want to maximize. As Figure \ref{fig:method} shows, if we take a disc as the initial total region, the update by \eqref{update total} will give a region which is not a disc after one iteration. This contradicts with the analytical result in \cite{Esposito2012} that the disc is the optimal solution of bisection in the two-dimensional case.

To address this issue, one approach is to identify all the shortest partitions of $u_{\Omega}^{k}$, and then update the total region in the direction of increasing the objective functional of all of them. However, this is impractical to implement as the number of shortest partitions can be infinite. An alternative approach is to reduce the update rate gradually when the change of objective functional is small. Then when $u_{\Omega}^{k}$ is close to the optimal solution, it will not change too much and go far away.

Instead of using all the points in the level set $\{\mathbf{x}\in\mathbb{R}^{d}:\phi^{k}(\mathbf{x})\geq\sigma\}$, we only use a part of it during the update of the total region. Denote $0\leq\beta\leq1$ the update step-length, and $u_{\Omega}^{process}(\mathbf{x})=\chi_{\sigma}(\mathbf{x})$ the indicator function obtained by \eqref{update total}. Denote $\Omega_{process}$ the region corresponding to $u_{\Omega}^{process}$, $A=\Omega_{process}-\Omega^k$ and $B=\Omega^k-\Omega_{process}$. Intuitively speaking, to increase the approximate objective functional in the direction of $(u_{i}^{k})_{i=1}^{n}$, the original algorithm tends to delete the points in $B$ and add the points in $A$ into the total region. Since $|\Omega_{process}|=|\Omega^k|$ by volume conservation, $|A| = |B|$ and $A\cap B=\emptyset$. In order to reduce the update rate, we find $\sigma_1$, $\sigma_2$ such that
\begin{align*}
    \int_{A}\chi_{\sigma_1}(\mathbf{x})\mathrm{d}\mathbf{x}&=\beta|A|,\\
    \int_{B}\chi_{\sigma_2}(\mathbf{x})\mathrm{d}\mathbf{x}&=(1-\beta)|B|,
\end{align*}
and define $\Tilde{A}=\{x\in A:\phi^{k}(\mathbf{x})\geq\sigma_1\}$ and $\Tilde{B}=\{x\in B:\phi^{k}(\mathbf{x})\leq\sigma_2\}$. Then $\Tilde{A}$ is a subset of $A$ with $|\Tilde{A}|=\beta |A|$, and for any $\mathbf{x}_{1}\in \Tilde{A}$, $\mathbf{x}_{2}\in (A-\Tilde{A})$, we have
\begin{align*}
    \phi^{k}(\mathbf{x}_1)\geq\sigma_1>\phi^{k}(\mathbf{x}_2),
\end{align*}
which means $\Tilde{A}$ is the part of $A$ with higher dominant function values. Similarly, $|\Tilde{B}|\subset |B|$, $|\Tilde{B}|=\beta |B|$ and $\Tilde{B}$ is the part of $B$ with lower dominant function values.
We update the total region by
\begin{align}
    u_{\Omega}^{k+1}(\mathbf{x})=u_{\Omega}^{k}(\mathbf{x})+\chi_{\Tilde{A}}-\chi_{\Tilde{B}},\label{update by part}
\end{align}
where $\chi_{\Tilde{A}}$ and $\chi_{\Tilde{B}}$ are the indicator functions of $\Tilde{A}$ and $\Tilde{B}$. In other words, we delete the points in $\Tilde{B}$ and add the points in $\Tilde{A}$ into the total region, instead of the whole $B$ and $A$. Note that $|\Tilde{A}|=|\Tilde{B}|$ and $\Tilde{A}\cap\Tilde{B}=\emptyset$. The update of the total region follows volume conservation. Here $\beta$ controls the degree of differences between $u_{\Omega}^{k}$ and $u_{\Omega}^{k+1}$. When $\beta=0$, the total region will not update during iterations; when $\beta=1$, the update is the same as \eqref{update total}. Now the update step-length can be controlled by setting different value of $\beta$.  

When $\beta$ is large, $u_{\Omega}^k$ may be difficult to converge due to the non-uniqueness of the solution and the two-step updating strategy. In fact, it oscillates around a disc. To obtain a stable and convergent scheme, we need to control the updating rate of $u_{\Omega}^k$, which is $\beta$ here. For a fixed $\beta$, after sufficiently many iterations, even though $u_{\Omega}^k$ oscillates, its objective functional stays at a certain scale. 
To evaluate the change of objective functional during iterations, we calculate the change of averaged objective functional over a  few ($M$ in Algorithm \ref{alg:first}) consecutive iterations. When the change is less than a tolerance $r_{tol}$, it is regarded to be small, and we reduce $\beta$ by multiplying a factor $\gamma$ ($0<\gamma<1$). Here the averaged objective functional of a few steps is used instead of the objective functional in one iteration. This can mitigate the impact of local shortest partitions on objective functional calculation.

\subsubsection{Update of \texorpdfstring{$(u_{i}^{k+1})_{i=1}^n$}{1}}

In the update of shortest partition, we fix $u_{\Omega}^{k+1}$ and use the approximate objective functional $\Hat{E}_{\tau}$ defined in \eqref{approximate energy 1}. Functions $(u_{i}^{k+1})_{i=1}^n$ are computed by solving  
\begin{align}
    (u_{i}^{k+1})_{i=1}^n = \argmin_{\substack{u_i \in \mathcal{B} \\
    \int_{\mathbb{R}^d} u_{i}d\mathbf{x}=c_i V\\
    \sum u_i = u_{\Omega}^{k+1}}}  \Hat{E}_{\tau}(u_{\Omega}^{k+1},(u_i)_{i=1}^n).
\end{align}

Observe that $\Hat{E}_{\tau}$ is proportional to the heat content energy in \cite{jacobs2018auction} 
\begin{align}
    HC_{\tau}=\frac{1}{\sqrt{\tau}}\sum_{i\neq j}\int u_i(\mathbf{x})(G_{\tau}\ast u_j)(\mathbf{x})\mathrm{d}\mathbf{x}.
\end{align} 
Therefore, given $u_{\Omega}^{k+1}$ with parameters of stopping criterion $\bm{\mu}$,  auction dynamics with volume constraints from \cite{jacobs2018auction} directly finds a shortest partition $(u_{i}^{k+1})_{i=1}^n$. Here $\bm{\mu}=(m,\epsilon_{min},\alpha,\epsilon_{0})$, $m$ is the number of maximum steps, $\epsilon_{min}$ is the auction error tolerance, $\alpha$ is the epsilon scaling factor and $\epsilon_{0}$ is the initial epsilon value. Denote the auction dynamics method by $ADM$. We compute $(u_{i}^{k+1})_{i=1}^n$ as
\begin{align}
     (u_{i}^{k+1})_{i=1}^n = ADM(u_{\Omega}^{k+1},V,\mathbf{c},\tau;\bm{\mu} ),\label{eq:adm}
\end{align}
where details of ADM is summarized in Algorithm \ref{alg:auction}, with the sub-routine membership auction summarized in Algorithm \ref{alg:membership}.

\begin{rem} In Algorithm \ref{alg:auction}, $\bm{\psi}^{l}$ is a tensor whose $i$-th element is $\psi_{i}^{l}(\mathbf{x})$. After discretizing the space, it becomes a matrix. The criterion for convergence is $u_{i,l+1}(\mathbf{x})=u_{i,l}(\mathbf{x})$ for all $i=1,\cdots,n$, \ie, when the partitions are no longer updated. In Algorithm \ref{alg:membership}, $i_{cs}(\mathbf{x},\mathbf{p})$ and $i_{next}(\mathbf{x},\mathbf{p},i^{\star})$ are defined by
    \begin{align*}
        i_{cs}(\mathbf{x},\mathbf{p})=\argmax_{1\leq i\leq n}\left[a_{i}(\mathbf{x})-p_{i}\right],\ 
        i_{next}(\mathbf{x},\mathbf{p},i^{\star})\in \argmax_{j\neq i^{\star}} \left[a_{j}(\mathbf{x})-p_{j}\right].
    \end{align*}
    Here $i_{cs}(\mathbf{x},\mathbf{p})$ is a set containing all indices satisfying the right-hand side,  $i_{next}(\mathbf{x},\mathbf{p},i^{\star})$ is just an arbitrary element of the set satisfying the right-hand side.
\end{rem}

\begin{algorithm}[t!]
\caption{Auction dynamics}\label{alg:auction}
\KwIn{Discrete total region $u_{\Omega}$, volume constraint $V$, partition proportions $\mathbf{c}$, time step $\tau$, parameters of stop conditions $\bm{\mu}=(m,\epsilon_{min},\alpha,\epsilon_{0})$. }
\KwOut{Shortest partitions $(u_i)_{i=1}^{n}$.}
\BlankLine
Set $l=0$; initialize $(u_{i,0})_{i=1}^{n}$ randomly with $\int u_{i,0} \mathrm{d}\mathbf{x}=c_{i}V$; set $\Bar{\epsilon}=\epsilon/n$; compute $G_{\tau}$;\\
\While{$l<m$ and not converged}{
Compute the convolutions: $\psi_{i}^{l}(\mathbf{x})=\sum_{i\neq j}(G_{\tau}\ast u_{j,l})(\mathbf{x})$;\\
Compute the assignment problem coefficients: $\mathbf{a}^{l} = 1-\bm{\psi}^{l}$;\\
Initialize the prices $\mathbf{p}=\mathbf{0}$, and $\epsilon=\epsilon_{0}$;\\
\While{$\epsilon\geq\Bar{\epsilon}$}{
Run Algorithm \ref{alg:membership}: $((u_i^{out})_{i=1}^{n},\mathbf{p}^{out})$= Membership auction$(\epsilon,V\mathbf{c},\mathbf{a}^{l},\mathbf{p},u_{\Omega})$;\\
Set $\mathbf{p}=\mathbf{p}^{out}$;\\
Divide $\epsilon$ by $\alpha$;\\
\If{$\epsilon<\Bar{\epsilon}$}{
Set $(u_{i,l+1})_{i=1}^{n}=(u_i^{out})_{i=1}^{n}$;
}}
Set $l=l+1$;
}
Return $(u_i)_{i=1}^{n}=(u_{i,l})_{i=1}^{n}$
\end{algorithm}

\begin{algorithm}[t!]
\caption{Membership auction }\label{alg:membership}
\KwIn{$\epsilon>0$, volumes $\mathbf{V}$, coefficients $\mathbf{a}$, initial prices $\mathbf{p}^{0}$ and indicator function $u_{\Omega}(\mathbf{x})$. }
\KwOut{Final partitions and prices $((u_i)_{i=1}^{n},\mathbf{p})$.}
\BlankLine
For every $i\in\{1,\cdots,n\}$ set $u_{i}(\mathbf{x})=0$ for all $\mathbf{x}$ such that $u_{\Omega}(\mathbf{x})=1$; set $\mathbf{p}=\mathbf{p}^{0}$\\
\While{some $\mathbf{x}$ is unassigned, i.e., $u_{\Omega}(\mathbf{x})=1$ but $\sum_{i}u_{i}(\mathbf{x})=0$,}{
\For{each unassigned $\mathbf{x}$}{
Compute $i_{cs}(\mathbf{x},\mathbf{p})$ and choose one element $i^{\star}\in i_{cs}(\mathbf{x},\mathbf{p})$;\\
Set $b(\mathbf{x})=p_{i^{\star}}+\epsilon+(a_{i^{\star}}(\mathbf{x})-p_{i^{\star}})-(a_{i_{next}(\mathbf{x},\mathbf{p},i^{\star})}(\mathbf{x})-p_{i_{next}(\mathbf{x},\mathbf{p},i^{\star})})$;\\
\eIf{$\int u_{i^{\star}}\mathrm{d}\mathbf{x}=V_{i^{\star}}$}{
Find $\mathbf{y}=\arg\min_{u_{i^{\star}}(\mathbf{z})=1}b(\mathbf{z})$;\\
Set $u_{i^{\star}}(\mathbf{y})=0$ and $u_{i^{\star}}(\mathbf{x})=1$;\\
Set $p_{i^{\star}}=\min_{u_{i^{\star}}(\mathbf{z})=1}b(\mathbf{z})$;
}{
Set $u_{i^{\star}}(\mathbf{x})=1$;\\
\If{$\int u_{i^{\star}}\mathrm{d}\mathbf{x}=V_{i^{\star}}$}{
Set $p_{i^{\star}}=\min_{u_{i^{\star}}(\mathbf{z})=1}b(\mathbf{z})$;
}}
}}
Return $((u_i)_{i=1}^{n},\mathbf{p})$
\end{algorithm}

The results of ADM critically depends on its initialization, which is usually set as random. In many cases, ADM does not always give global shortest partitions, but a local one. To make the partitions more likely to be a global shortest one, in the first method, ADM is repeated several times in each iteration. Every time a random initialization is given and a corresponding partition is created, we compute the objective functional of all partitions. The partitions with minimal objective functional are regarded as the shortest partitions. This increases the stability of the method.

The first two-step iterative method for the longest minimal length partitions problem is summarized in Algorithm \ref{alg:first}. The stopping criterion is set to be $\beta<\beta_{min}$ for some constant $\beta_{min}$, or the total regions between two iterations are exactly the same.

\begin{algorithm}[t!]
\caption{The first method for longest minimal length partitions }\label{alg:first}
\KwIn{Volume constraint $V$, partition proportions $\mathbf{c}=(c_1,\dots,c_n)$ satisfying $c_i>0$ and $\sum_{i=1}^n c_i=1$, time step $\tau>0$, auction dynamics parameters $\boldsymbol{\mu}$, repeat times of auction dynamics $p$, update step-length parameters $0\leq\beta_0\leq1$,$0<\gamma<1$, the number of steps in average objective functional $M\in\mathbb{N}$, $r_{tol}>0$, $0<\beta_{min}<1$.}
\KwOut{$u_{\Omega}$ maximizing the objective functional, shortest partitions $(u_i)_{i=1}^{n}$.}
\BlankLine
Set $k=0$; initialize $u_{\Omega}^{0}$ randomly with $\int u_{\Omega}^{0} \mathrm{d}\mathbf{x}=V$; $\beta=\beta_{0}$;\\
\For{$1\leq q\leq p$}{Compute $(u_{i,q}^0)_{i=1}^{n}=ADM(u_{\Omega}^0,V,\mathbf{c},\tau;\mu )$;}
Set $(u_{i}^{0})_{i=1}^{n}=\arg\min_{(u_{i,q}^0)_{i=1}^{n}}\Tilde{E}_{\tau}(u_{\Omega}^{0},(u_{i,q}^0)_{i=1}^{n})$;\\
\While{$\beta>\beta_{min}$ and not converge}{
Compute dominant function $ \phi^{k}$ by \eqref{dominant function1};\\
Update $u_{\Omega}^{k+1}$ by \eqref{update by part} using $\beta$;\\
\For{$1\leq q\leq p$}{Compute $(u_{i,q}^{k+1})_{i=1}^{n}=ADM(u_{\Omega}^{k+1},V,\mathbf{c},\tau;\mu )$;}
Set $(u_{i}^{k+1})_{i=1}^{n}=\arg\min_{(u_{i,q}^{k+1})_{i=1}^{n}}\Tilde{E}_{\tau}(u_{\Omega}^{k+1},(u_{i,q}^{k+1})_{i=1}^{n})$;\\
\If{$k>M$}{
Compute $E_{ave}^{k}=\frac{1}{M}\sum_{j=k-M+1}^{k}\Tilde{E}_{\tau}(u_{\Omega}^{j},(u_{i}^{j})_{i=1}^{n})$;\\
Compute $E_{ave}^{k+1}=\frac{1}{M}\sum_{j=k-M+2}^{k+1}\Tilde{E}_{\tau}(u_{\Omega}^{j},(u_{i}^{j})_{i=1}^{n})$;\\
\If{$|E_{ave}^{k+1}-E_{ave}^{k}|/|E_{ave}^{k+1}|<r_{tol}$}{$\beta=\gamma\beta$;\\}
}
$k=k+1$;
}
Return $u_{\Omega}=u_{\Omega}^{k}$, $(u_i)_{i=1}^{n}=(u_i^{k})_{i=1}^{n}$
\end{algorithm}

\begin{rem}
    In Algorithm \ref{alg:first} and Algorithm \ref{alg:second}, convergence means $u_{\Omega}^{k+1}=u_{\Omega}^{k}$.
\end{rem}

\begin{rem}
    In Algorithm \ref{alg:first}, although ADM is repeated with the same parameters, the random initialization makes the output $(u_{i,q}^{k+1})_{i=1}^{n}$ different each time.
\end{rem}

\subsection{The second method}
In the first method, to increase the stability, ADM is repeated several times in one iteration to find the shortest partitions. However, this increases the time complexity. To address this issue, the second method uses a more efficient way for stability. In the updating of $(u_{i}^{k})_{i=1}^{n}$, the second method follows a similar process as the first method but only employs ADM once. We omit the details here. In the updating of $u_{\Omega}^{k+1}$, a regularization term is introduced.
\subsubsection{Update of \texorpdfstring{$u_{\Omega}^{k+1}$ by $(u_{i}^{k})_{i=1}^n$}{}}
In the update of the total region, the approximate objective functional $\Tilde{E}_{\tau}$ in \eqref{approximate energy 2} is used. Given $u_{\Omega}^{k}$ and $(u_{i}^{k})_{i=1}^n$, $u_{\Omega}^{k+1}$ is attained by
\begin{align}\label{update Omega: 2}
    u_{\Omega}^{k+1} = \arg\max_{\substack{    u_{\Omega} \in \mathcal{B} \\
    \int u_{\Omega}=V}}  \Tilde{E}_{\tau}(u_{\Omega},(u_{i}^{k})_{i=1}^n)+\lambda\int_{\mathbb{R}^{d}} u_{\Omega} G_{\tau'}\ast\left( \sum_{i=1}^{n}u_{i}^{k}\right)\mathrm{d}\mathbf{x},
\end{align}
where $\lambda>0,\tau'>0$. In (\ref{update Omega: 2}), the second term is a regularization term to relax the constraint of $\sum_{i=1}^{n} u_i = u_{\Omega}$ and ensure that $u_{\Omega}^{k+1}$ does not deviate significantly from $u_{\Omega}^k$. In the following, we denote the regularized objective functional as 
\begin{align}
    \Tilde{E}_{\tau,\tau'}^{\lambda}(u_{\Omega},(u_{i}^{k})_{i=1}^n)=\Tilde{E}_{\tau}(u_{\Omega},(u_{i}^{k})_{i=1}^n)+\lambda\int_{\mathbb{R}^{d}} u_{\Omega}G_{\tau'}\ast\left( \sum_{i=1}^{n}u_{i}^{k}\right)\mathrm{d}\mathbf{x}.\nonumber
\end{align}

In fact, (\ref{update Omega: 2}) is a discretized scheme for an evolution equation of $u_{\Omega}$. Based on (\ref{approximate energy 2}), $u_{\Omega}^{k+1}$ is expected to maximize $\widetilde{E}_{\tau}(u_{\Omega},(u_i^k)_{i=1}^n)$. Consider a gradient ascent flow:
\begin{align}
    \frac{\partial u_{\Omega}}{\partial t}-\partial \widetilde{E}_{\tau}(u_{\Omega},(u_i^k)_{i=1}^n)=0.
    \label{eq.u_omega.update}
\end{align}
We update $u_{\Omega}^{k+1}$ by solving (\ref{eq.u_omega.update}) by a short time (one time step), and discretize (\ref{eq.u_omega.update}) with the time step $1/\lambda$ by a backward Euler scheme:
\begin{align}
    \frac{u_{\Omega}^{k+1}-u_{\Omega}^k}{1/\lambda}-\partial \widetilde{E}_{\tau}(u_{\Omega}^{k+1},(u_i^k)_{i=1}^n)=0.
\end{align}
Then $u_{\Omega}^{k+1}$ solves
\begin{align}
    u_{\Omega}^{k+1}=&\argmin_{u_{\Omega}} \left[\frac{\lambda}{2}\int_{\mathbb{R}} |u_{\Omega}-u_{\Omega}^k|^2d\bx -\widetilde{E}_{\tau}(u_{\Omega},(u_i^k)_{i=1}^n)\right] \nonumber\\
    =& \argmax_{u_{\Omega}} \left[\widetilde{E}_{\tau}(u_{\Omega},(u_i^k)_{i=1}^n)- \frac{\lambda}{2}\int_{\mathbb{R}} |u_{\Omega}-u_{\Omega}^k|^2d\bx\right]\nonumber\\
    =& \argmax_{u_{\Omega}} \left[\widetilde{E}_{\tau}(u_{\Omega},(u_i^k)_{i=1}^n)- \frac{\lambda}{2}\int_{\mathbb{R}} \left((u_{\Omega})^2+(u_{\Omega}^k)^2-2u_{\Omega}u_{\Omega}^k\right)d\bx\right].
    \label{eq.u_omega.update.1}
\end{align}
Using the constraint $u_{\Omega}\in \mathcal{B}, \int_{\mathbb{R}} u_{\Omega}d\bx=V$ and the fact $u_{\Omega}^k\in \mathcal{B}, \int_{\mathbb{R}} u_{\Omega}^kd\bx=V$, we further deduce
\begin{align}
    u_{\Omega}^{k+1}=& \argmax_{u_{\Omega}} \left[\widetilde{E}_{\tau}(u_{\Omega},(u_i^k)_{i=1}^n)- \frac{\lambda}{2}\int_{\mathbb{R}} \left(u_{\Omega}+u_{\Omega}^k-2u_{\Omega}u_{\Omega}^k\right)d\bx\right]\nonumber\\
    =& \argmax_{u_{\Omega}} \left[\widetilde{E}_{\tau}(u_{\Omega},(u_i^k)_{i=1}^n)- \frac{\lambda}{2}\int_{\mathbb{R}} \left(-2u_{\Omega}u_{\Omega}^k\right)d\bx\right]-\lambda V \nonumber\\
    =& \argmax_{u_{\Omega}} \left[\widetilde{E}_{\tau}(u_{\Omega},(u_i^k)_{i=1}^n)+\lambda\int_{\mathbb{R}} u_{\Omega}u_{\Omega}^kd\bx\right].
    \label{eq.u_omega.update.2}
\end{align}
Notice that $\sum_{i=1}^{n}u_{i}^{k}=u_{\Omega}^{k}$. Thus (\ref{update Omega: 2}) is an approximation of (\ref{eq.u_omega.update.2}), and such an approximation becomes exact as $\tau'\rightarrow 0^+$.

To solve \eqref{update Omega: 2}, similar to the first method, we relax the optimization to an equivalent problem: finding $u_{\Omega}^{k+1}$ such that 
\begin{align}
    u_{\Omega}^{k+1} = \arg\max_{\substack{
    u_{\Omega} \in \mathcal{K} \\
    \int u_{\Omega}=V }} \Tilde{E}_{\tau,\tau'}^{\lambda}(u_{\Omega},(u_{i}^{k})_{i=1}^n),
\end{align}
where $\mathcal{K}:=\{u\in BV(\Omega,\mathbb{R})| u\in[0,1] \}$. The proof of equivalence is similar to that of Lemma \ref{lem:equivalence}, and is omitted here.

The linearity of $\Tilde{E}_{\tau,\tau'}^{\lambda}(u_{\Omega},(u_{i}^{k})_{i=1}^n)$ is computed as
\begin{align*}
    L_{\tau,\tau'}^{\lambda}(u_{\Omega},u_{\Omega}^{k},(u_{i}^{k})_{i=1}^n)=\int_{\mathbb{R}^{d}}u_{\Omega}\phi^{k}\mathrm{d}\mathbf{x},
\end{align*}
where the dominant function is
\begin{align}
    \phi^{k}(\mathbf{x})=\sqrt{\frac{\pi}{\tau}}\left(S_{\tau/2}^{k}-\sum_{i=1}^{n}\left(G_{\tau/2}\ast u_{i}^{k}\right)^{2}+(S_{\tau/2}^{k})^{\frac{1}{2}}G_{\tau/2}\ast \left((S_{\tau/2}^{k})^{\frac{1}{2}}\left(2u_{\Omega}^{k}-1\right)\right)\right)+\lambda G_{\tau'}\ast\left(\sum_{i=1}^{n}u_{i}^{k}\right),\label{dominant function2}
\end{align}
and $S_{\tau/2}^{k}$ denotes $S_{\tau/2}^{k}(\mathbf{x})=G_{\tau/2}\ast \left(\sum_{i=1}^{n}u_{i}^{k}\right)(\mathbf{x})$. Then $u_{\Omega}^{k+1}$ can be computed by \eqref{update by part}, the same as in the first method.

For the stopping criterion, the second method also uses $\beta$ to control the update step-length, and reduces $\beta$ when the rate of change of the averaged objective functional in a few iterations is less than the tolerance $r_{tol}$. The iteration stops when $\beta<\beta_{min}$, or when the total region between two consecutive steps are the same. The algorithm of the second method is summarized in Algorithm \ref{alg:second}.

\begin{algorithm}[t!]
\caption{The second method for longest minimal length partitions}\label{alg:second}
\KwIn{Volume constraint $V$, partition proportions $\mathbf{c}=(c_1,\dots,c_n)$ satisfying $c_i>0$ and $\sum_{i=1}^n c_i=1$, time step $\tau>0$, regularization parameter $\lambda>0$, time step in regularization term $\tau'>0$ auction dynamics parameters $\boldsymbol{\mu}$, update step-length parameters $0\leq\beta_0\leq1$,$0<\gamma<1$, $M\in\mathbb{N}$, $r_{tol}>0$, $0<\beta_{min}<1$}
\KwOut{$u_{\Omega}$ maximizing the objective functional, relative shortest partitions $(u_i)_{i=1}^{n}$}
\BlankLine
Set $k=0$; initialize $u_{\Omega}^{0}$ randomly with $\int u_{\Omega}^{0} \mathrm{d}\mathbf{x}=V$;\\
Find $(u_i^0)_{i=1}^{n}=ADM(u_{\Omega}^0,V,\mathbf{c},\tau;\mu )$;\\
\While{$\beta>\beta_{min}$ and not converge}{
Compute dominant function $ \phi^{k}$ by \eqref{dominant function2} using $\tau$ and $\tau'$;\\
Update $u_{\Omega}^{k+1}$ by \eqref{update by part};\\
Compute $(u_i^{k+1})_{i=1}^{n}=ADM(u_{\Omega}^{k+1},V,\mathbf{c},\tau;\mu )$;\\
\If{$k>M$}{
Compute $E_{ave}^{k}=\frac{1}{M}\sum_{j=k-M+1}^{k}\Tilde{E}_{\tau}(u_{\Omega}^{j},(u_{i}^{j})_{i=1}^{n})$;\\
Compute $E_{ave}^{k+1}=\frac{1}{M}\sum_{j=k-M+2}^{k+1}\Tilde{E}_{\tau}(u_{\Omega}^{j},(u_{i}^{j})_{i=1}^{n})$;\\
\If{$|E_{ave}^{k+1}-E_{ave}^{k}|/|E_{ave}^{k+1}|<r_{tol}$}{$\beta=\gamma\beta$;\\}
}
$k=k+1$;
}
Return $u_{\Omega}=u_{\Omega}^{k}$, $(u_i)_{i=1}^{n}=(u_i^{k})_{i=1}^{n}$
\end{algorithm}

In the first method, the auction dynamics method is run several times in each iteration to find a partition, making the algorithm more stable and the partition more likely to be the global shortest one instead of a local one. However, the computational complexity of this method is high. In the second method, a regularization term is added in the approximate objective functional for stability. It doesn't need to run the auction dynamics several times at every iteration in order to maintain the stability of the algorithm.  Thus it is faster than the first method. Since the partition in each step may not be the global minimizer but a local one, the second method may need more iterations to converge. 

\section{Numerical experiments}
\label{sec:exper}
We demonstrate the effectiveness of the proposed methods by experiments in two and three dimensions. Our methods are implemented by MATLAB. Convolutions are implemented using Fast Fourier Transform. 


\subsection{Numerical experiments in two dimensions}

To simulate the longest minimal length partitions in two dimensions, a $256\times256$ regular grid is formed in $[-\pi,\pi]^{2}$ with a space step $\Delta x=2\pi/256$. The time step is set to be $\tau=2\Delta x\approx 0.049$, and $\tau'=0.5\Delta x \approx 0.013$. The stopping parameters in the auction dynamics method are set to be $\bm{\mu}=(m,\epsilon_{min},\alpha,\epsilon_{0})=(1000,10^{-7},4,0.1)$. Other parameters are set as $\beta_0=1$ (the initial update rate for $u_{\Omega}$), $\gamma=\frac{1}{2}$ (the decay factor of $\beta$), $\beta_{min}=0,05$ (the lower bound of $\beta$), $M=5$ (the number of steps in averaged objective functional) and $r_{tol}=10^{-4}$. For the first method, auction dynamics repeat-time is $p=5$. For the second method, regularization parameter is $\lambda=10$.

\subsubsection{Two partitions}
\label{sec.numerical.2d.two}
We first conduct experiments for the two dimensional example with two partitions of equal proportions. The initial condition is set to be a five-petal flower with the indicator function $\rho^2<\pi^{2}(0.4+0.2\sin{5\theta})$, where $\rho$ and $\theta$ are the radius and angle in the polar coordinate respectively. The partition proportion is $\mathbf{c}=(\frac{1}{2},\frac{1}{2})$. Both methods derived in Section \ref{sec:method} are implemented. The first method stops after 34 iterations, and the second method stops after 63 iterations. The total regions and their shortest partitions of both methods in the initial, 5th, 10th and final iterations are plotted in Figure \ref{fig: 2partition}. For visualization of the results, the indicator functions of the region and partitions are smoothed using a Gaussian kernel convolution before being plotted in Figure \ref{fig: 2partition} and the figures in the following. 

\begin{figure}[t!]
	\centering
	\begin{tabular}{|c|c|c|c|c|}
		\hline 
		 & Initial & 5th iteration & 10th iteration & Final  \\
        \hline
        \makecell{First\\method}  &
		\raisebox{-.5\height}{\includegraphics[width=0.20\textwidth, clip, trim = 4cm 2.5cm 3cm 1.5cm]{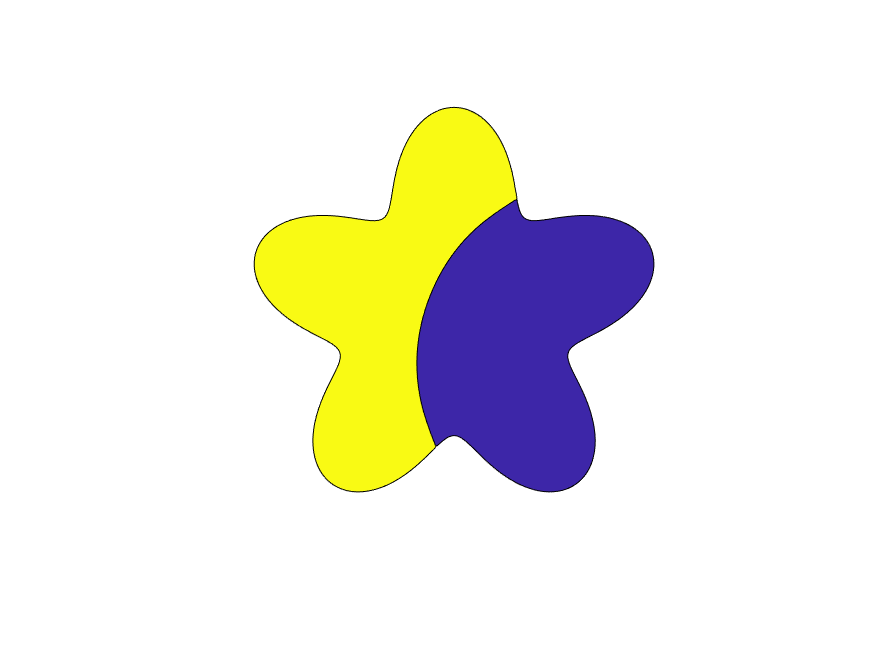}}&  
		\raisebox{-.5\height}{\includegraphics[width=0.20\textwidth, clip, trim = 4cm 2.5cm 3cm 1.5cm]{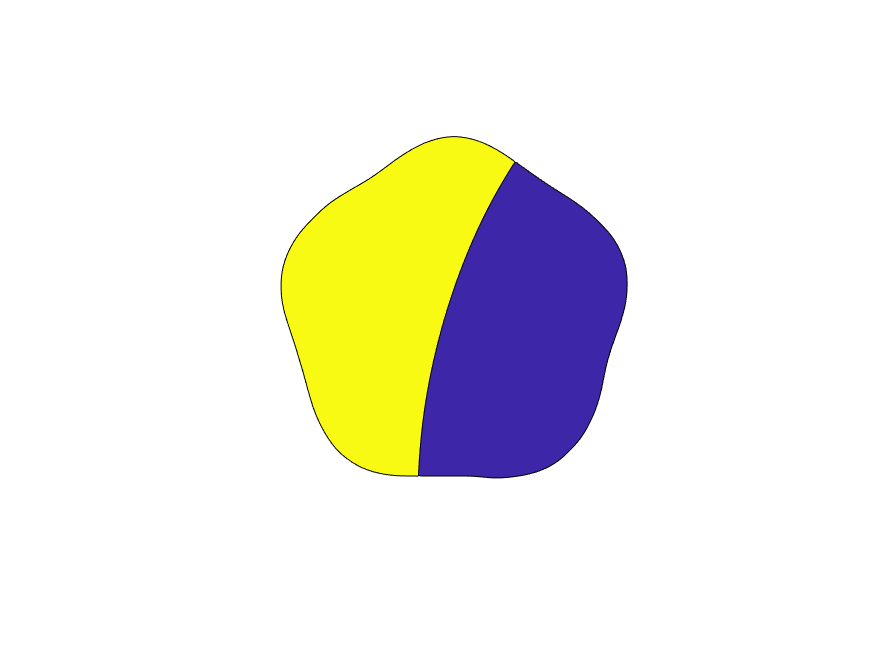}}&
		\raisebox{-.5\height}{\includegraphics[width=0.20\textwidth, clip, trim = 4cm 2.5cm 3cm 1.5cm]{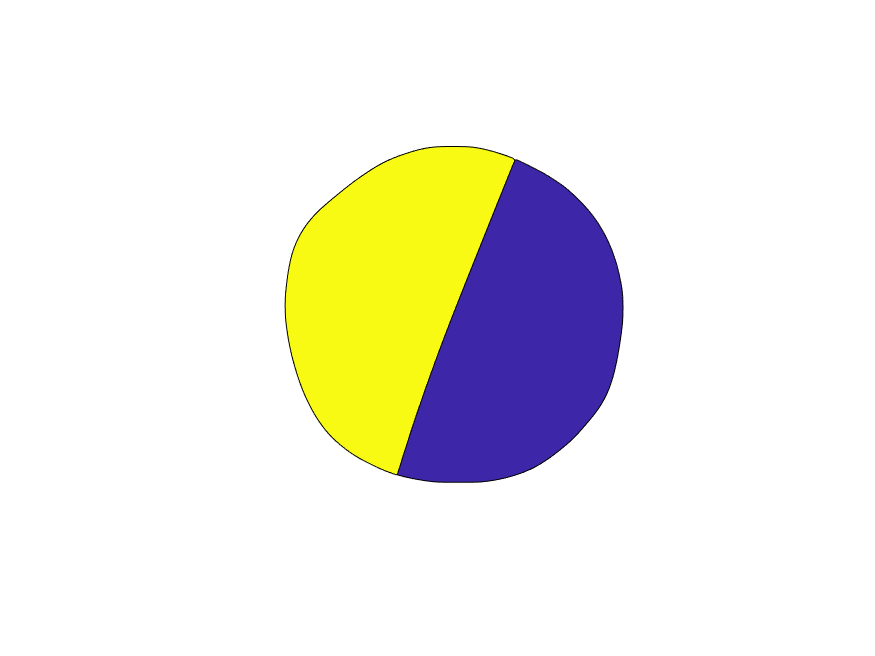}}&
		\raisebox{-.5\height}{\includegraphics[width=0.20\textwidth, clip, trim = 4cm 2.5cm 3cm 1.5cm]{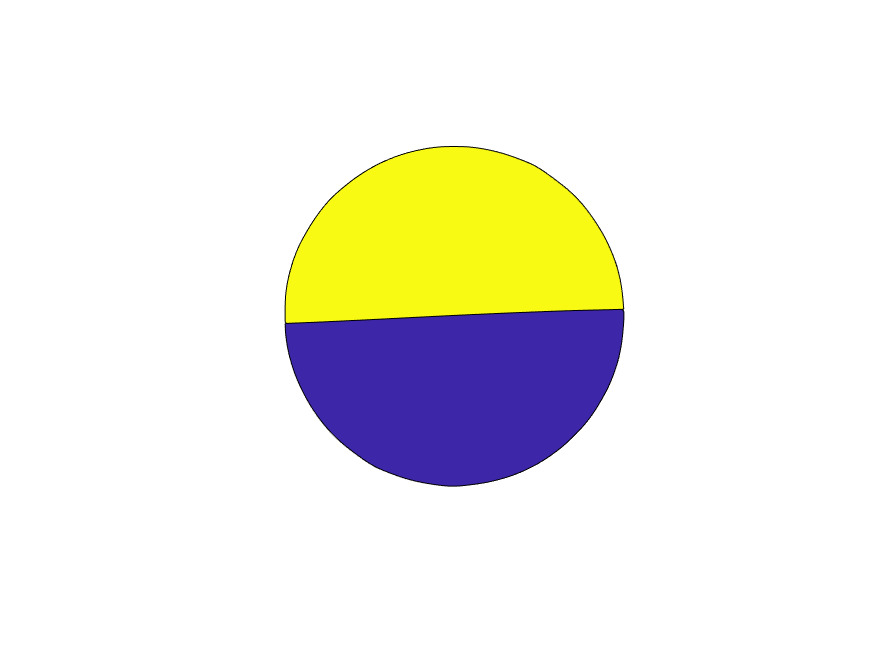}}
         \\ 
        \hline
        \makecell{Second\\method}  &
		\raisebox{-.5\height}{\includegraphics[width=0.20\textwidth, clip, trim = 4cm 2.5cm 3cm 1.5cm]{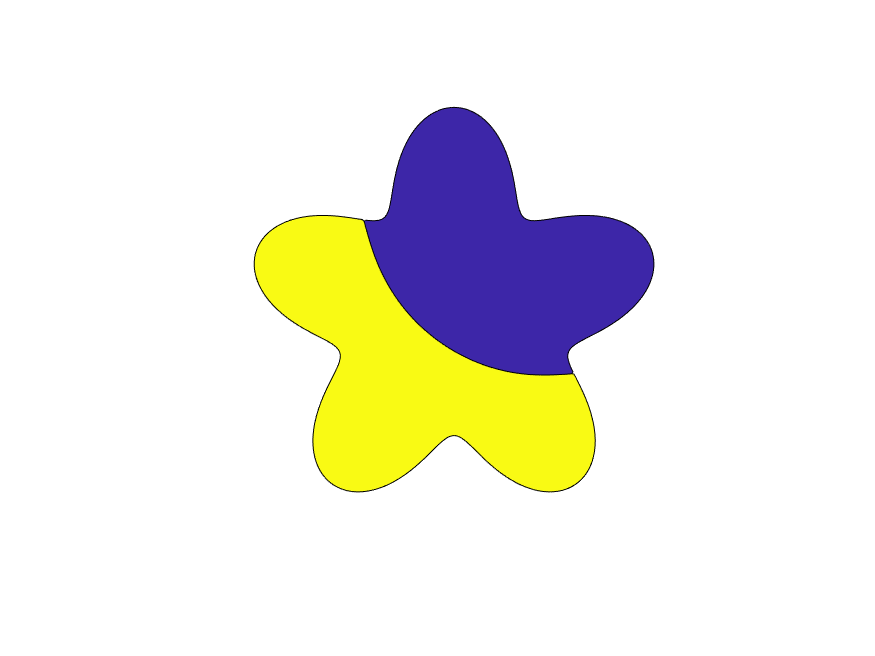}}&  
		\raisebox{-.5\height}{\includegraphics[width=0.20\textwidth, clip, trim = 4cm 2.5cm 3cm 1.5cm]{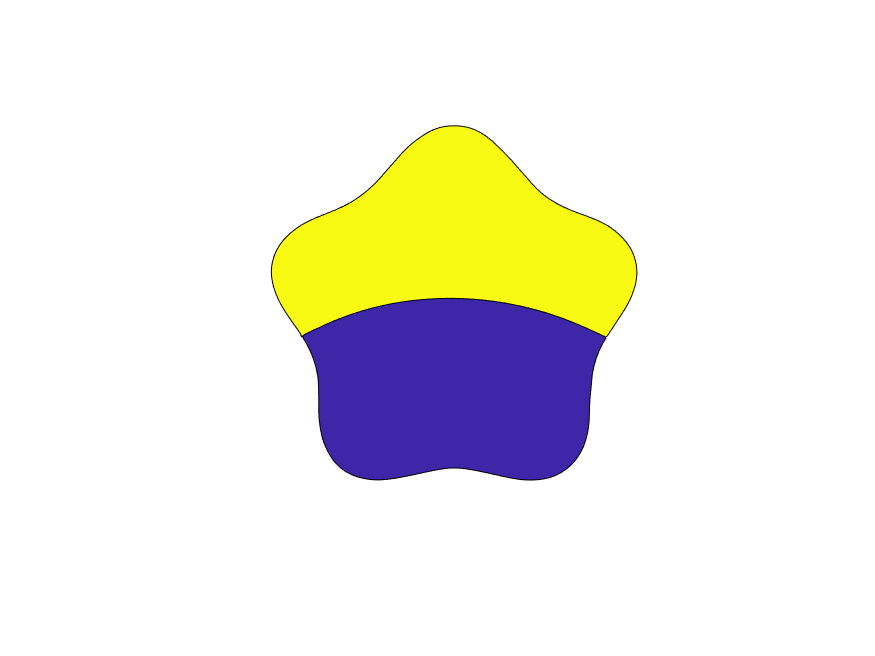}}&
		\raisebox{-.5\height}{\includegraphics[width=0.20\textwidth, clip, trim = 4cm 2.5cm 3cm 1.5cm]{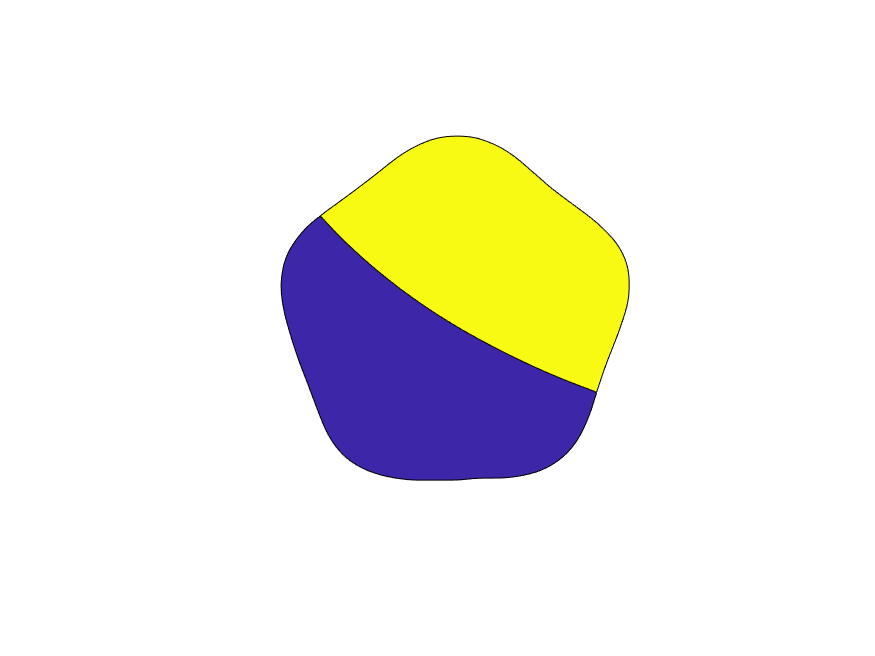}}&
		\raisebox{-.5\height}{\includegraphics[width=0.20\textwidth, clip, trim = 4cm 2.5cm 3cm 1.5cm]{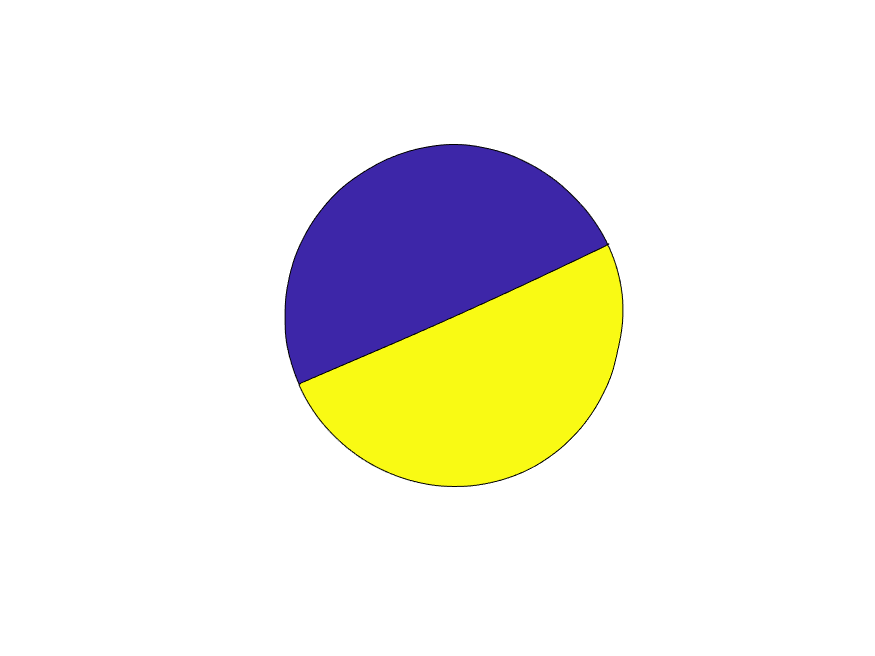}}\\
		\hline
	\end{tabular}
	\caption{Two uniform partitions in two dimensions. The initial condition is set to be a five-petal flower. The total regions and their shortest partitions of both methods in the initial, $5th$, $10th$,  and final iterations are plotted.}\label{fig: 2partition}
\end{figure}

The evolution of the approximate objective functionals $\Tilde{E}_{\tau}(u_{\Omega}^{k},(u_i^k)_{i=1}^{n})$ of both methods are plotted in Figure \ref{fig:energy 2partition}. As we can see, the final approximate objective functionals of two methods are both about $9.32$. In the $10$th iteration of the first method, the objective functional is near the final maximum objective functional, and the shape of total region shown in Figure \ref{fig: 2partition} is close to a disc. Compared with the  approximate objective functional of the second method, the approximate objective functional of first method has less fluctuation. This is because auction dynamics is repeated several times in each step of the first method, which increases the likelihood of obtaining a global shortest partition instead of a local minimizer. Thus the approximate objective functional in the first method should have a smaller gap with the exact objective functional compared to the second method.
\begin{figure}[t!]
    \centering
    \subfigure[First method]{ \includegraphics[width=0.49\textwidth, clip, trim = 1.5cm 0.8cm 0.5cm 0.5cm]{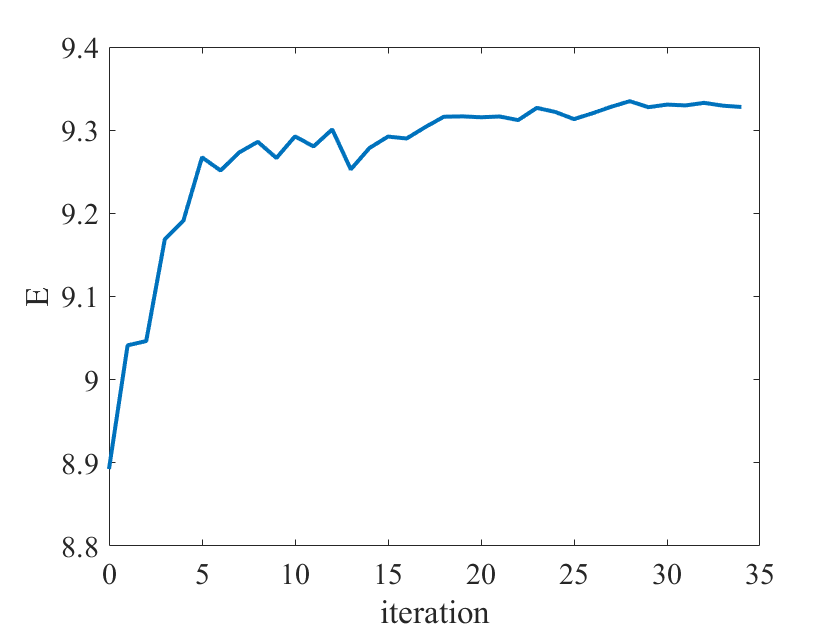}}
    \subfigure[Second method]{\includegraphics[width=0.49\textwidth, clip, trim = 1.0cm 0.8cm 0.5cm 0.5cm]{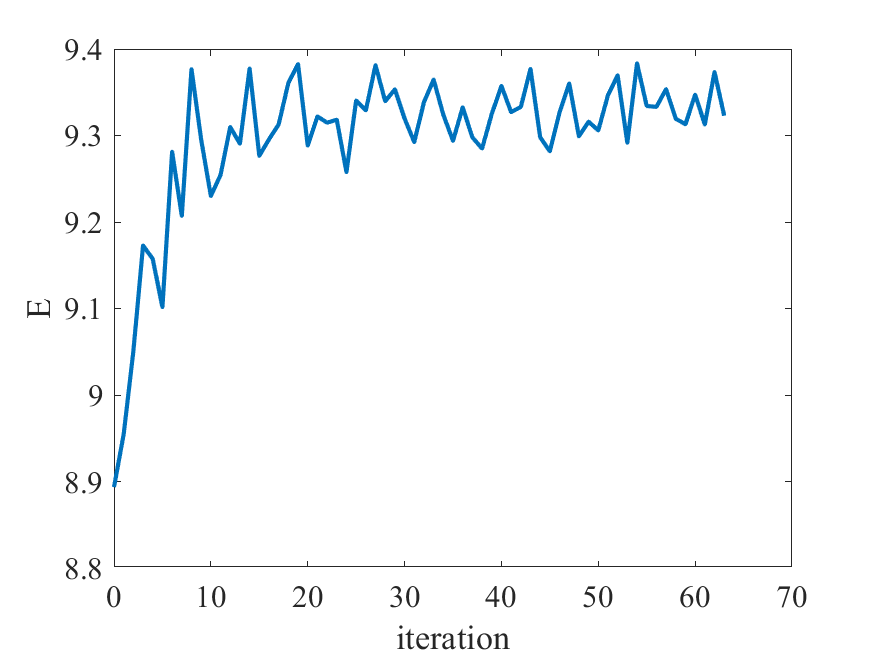}}
    \caption{Two uniform partitions in two dimensions. The approximate objective functionals $\Tilde{E}_{\tau}(u_{\Omega}^{k},(u_i^k)_{i=1}^{n})$ of two methods versus iteration times. The horizontal axis represents the number of iterations. The vertical axis represents the functional value.}
    \label{fig:energy 2partition}
\end{figure}

To further confirm that we get a disc as the optimal total region, the isoperimetric inequality is checked for the final iteration results in Figure \ref{fig: 2partition}. For any simple-connected region $\Omega$ in 2d, we have
\begin{align*}
    4\pi |\Omega|\leq |\partial \Omega|^{2},
\end{align*}
and the equality holds only when $\Omega$ is a disc. Using the indicator function of the initial condition, the area is computed to be $0.4\pi^3$. For the perimeter, we use the approximation
\begin{align*}
    |\partial\Omega|\approx \sqrt{\frac{\pi}{\tau}}\int_{\mathbb{R}^d}u_{\Omega}G_{\tau}\ast(1-u_{\Omega})\mathrm{d}\mathbf{x}.
\end{align*}
Then the values of $ 4\pi |\Omega|/ |\partial \Omega|^{2}$ for the results given by both methods are computed, which are 1.0056 and 1.0055. Here the final values are bigger than 1 due to the numerical error. Since they are close to $1$, the isoperimetric inequality corroborates that the optimal total region by each method is a disc.

As the experiments show, for the uniform two-partition case, numerical simulations imply that the disc is the maximizer for longest minimal length partitions, which is compatible with the analytical results proved in \cite{Esposito2012}. 

\subsubsection{Three partitions}

We then test the proposed methods to find the optimal solution for three partitions with equal and unequal proportions. The volume proportion parameter $\mathbf{c}$ is set to be $(\frac{1}{3},\frac{1}{3},\frac{1}{3})$, $(\frac{1}{4},\frac{1}{4},\frac{1}{2})$, $(\frac{1}{6},\frac{1}{6},\frac{2}{3})$, $(\frac{1}{6},\frac{1}{3},\frac{1}{2})$ and $(\frac{1}{10},\frac{1}{5},\frac{7}{10})$. The results of two methods are shown in Figure \ref{fig: unequal proportion}. As we can see, although the value of $\mathbf{c}$ varies, the optimal total regions given by both methods keep to be a disc, and the partition results of both methods look similar.

\begin{figure}[t!]
	\centering
	\begin{tabular}{|c|c|c|c|c|c|}
		\hline 
		  $\mathbf{c}$ & $(\frac{1}{3},\frac{1}{3},\frac{1}{3})$ & $(\frac{1}{4},\frac{1}{4},\frac{1}{2})$ & $(\frac{1}{6},\frac{1}{6},\frac{2}{3})$ & $(\frac{1}{6},\frac{1}{3},\frac{1}{2})$ & $(\frac{1}{10},\frac{1}{5},\frac{7}{10})$\\
        \hline
        \makecell{Initial\\condition} &
		\raisebox{-.5\height}{\includegraphics[width=0.15\textwidth, clip, trim = 4cm 2.5cm 3cm 1.5cm]{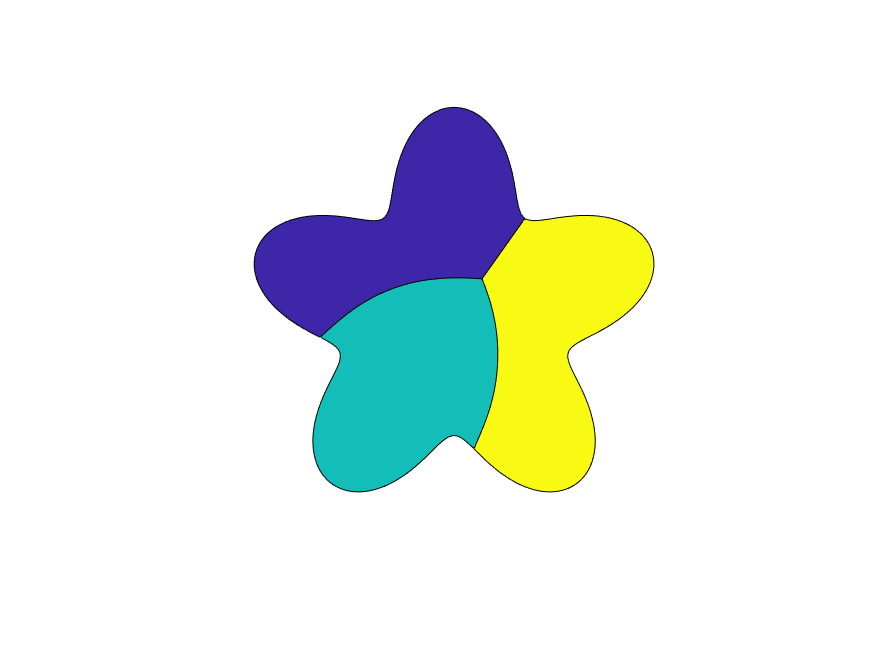}}&  
		\raisebox{-.5\height}{\includegraphics[width=0.15\textwidth, clip, trim = 4cm 2.5cm 3cm 1.5cm]{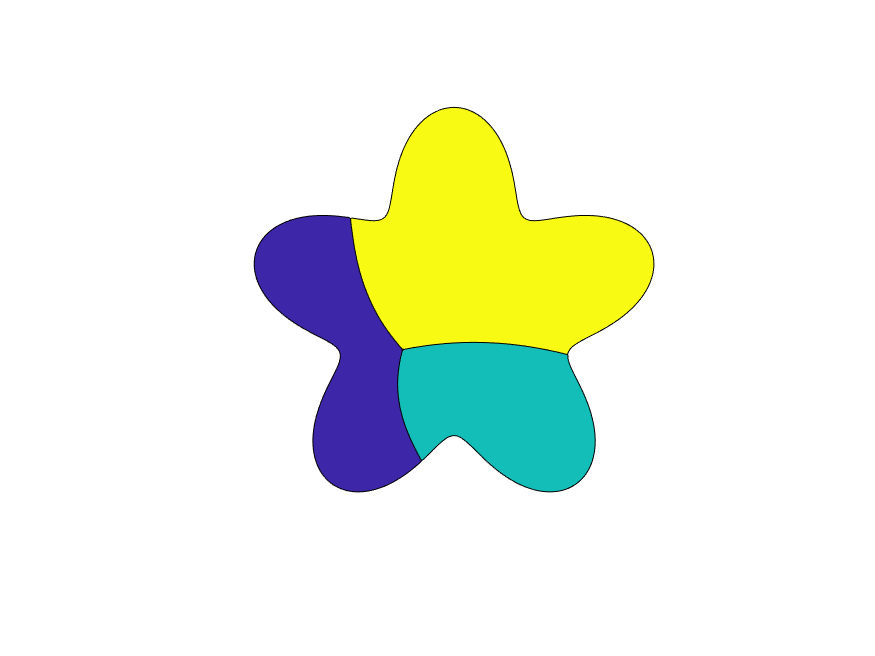}}&
		\raisebox{-.5\height}{\includegraphics[width=0.15\textwidth, clip, trim = 4cm 2.5cm 3cm 1.5cm]{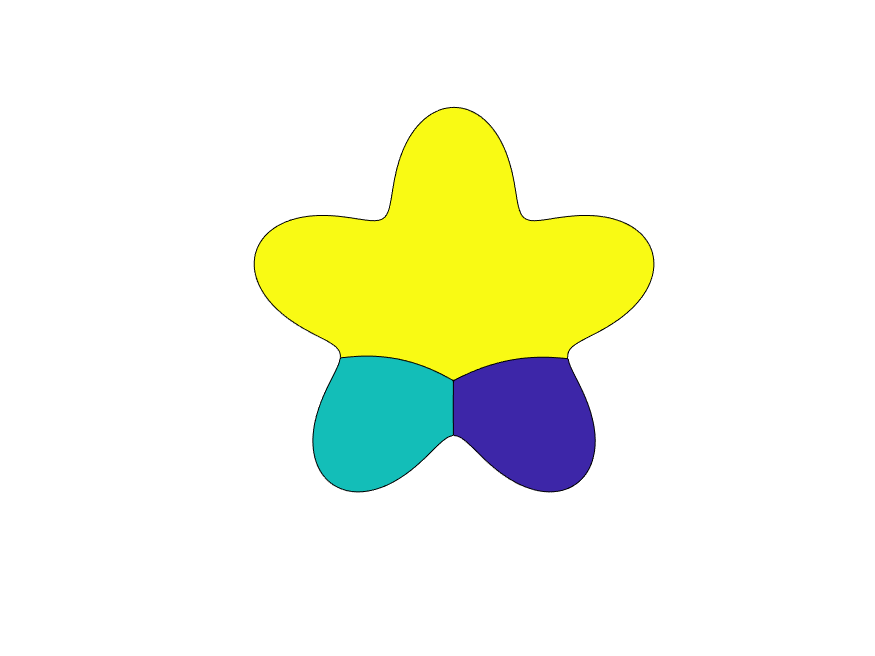}}&
        \raisebox{-.5\height}{\includegraphics[width=0.15\textwidth, clip, trim = 4cm 2.5cm 3cm 1.5cm]{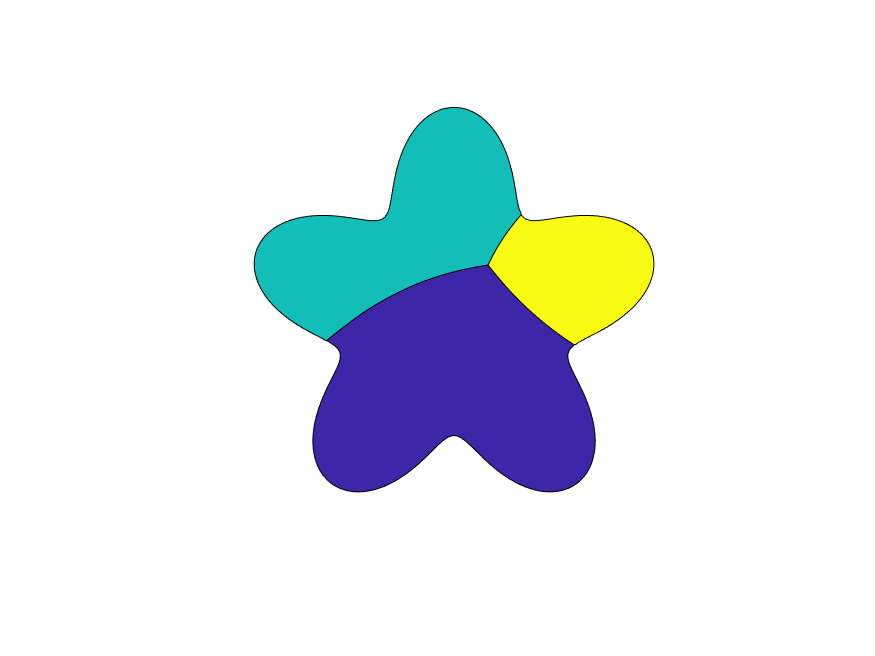}}&
		\raisebox{-.5\height}{\includegraphics[width=0.15\textwidth, clip, trim = 4cm 2.5cm 3cm 1.5cm]{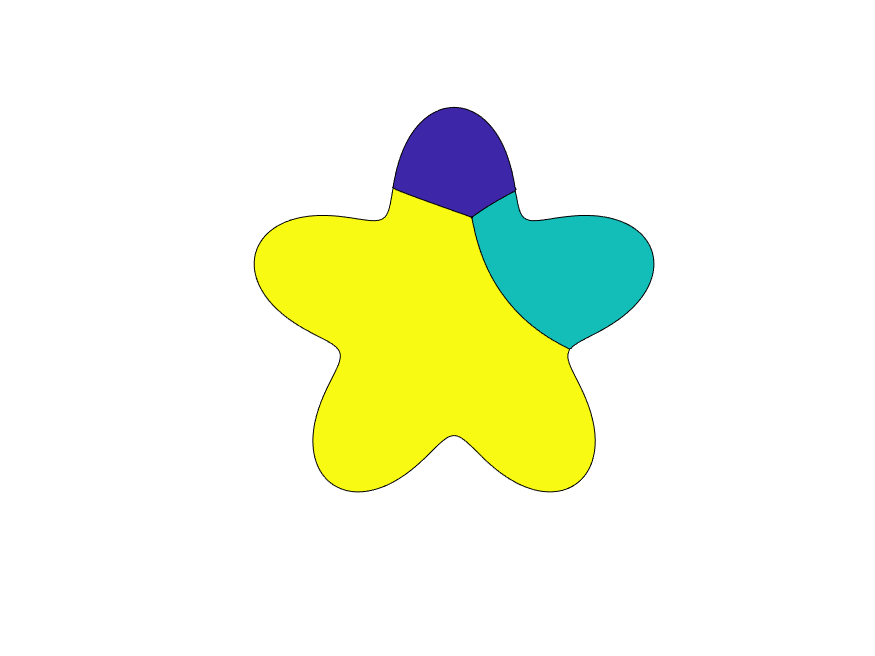}}\\ 
        \hline
        \makecell{First\\method}  &
		\raisebox{-.5\height}{\includegraphics[width=0.15\textwidth, clip, trim = 4cm 2.5cm 3cm 1.5cm]{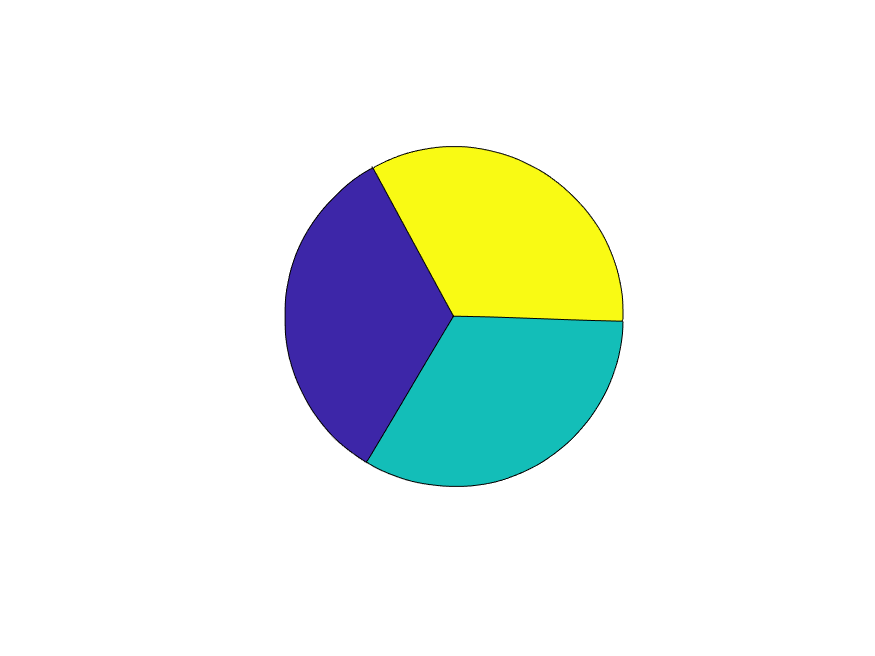}}&  
		\raisebox{-.5\height}{\includegraphics[width=0.15\textwidth, clip, trim = 4cm 2.5cm 3cm 1.5cm]{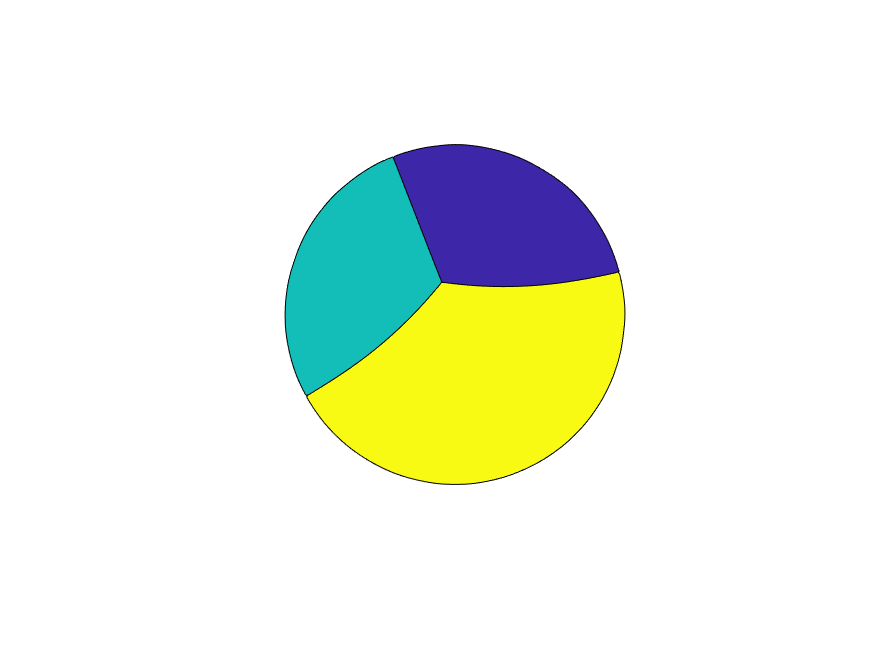}}&
		\raisebox{-.5\height}{\includegraphics[width=0.15\textwidth, clip, trim = 4cm 2.5cm 3cm 1.5cm]{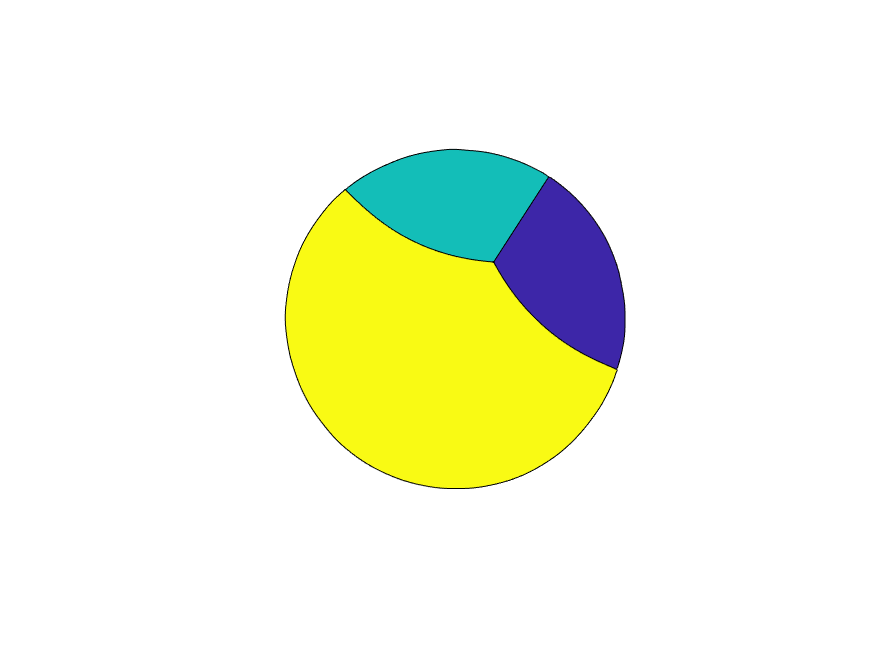}}&
		\raisebox{-.5\height}{\includegraphics[width=0.15\textwidth, clip, trim = 4cm 2.5cm 3cm 1.5cm]{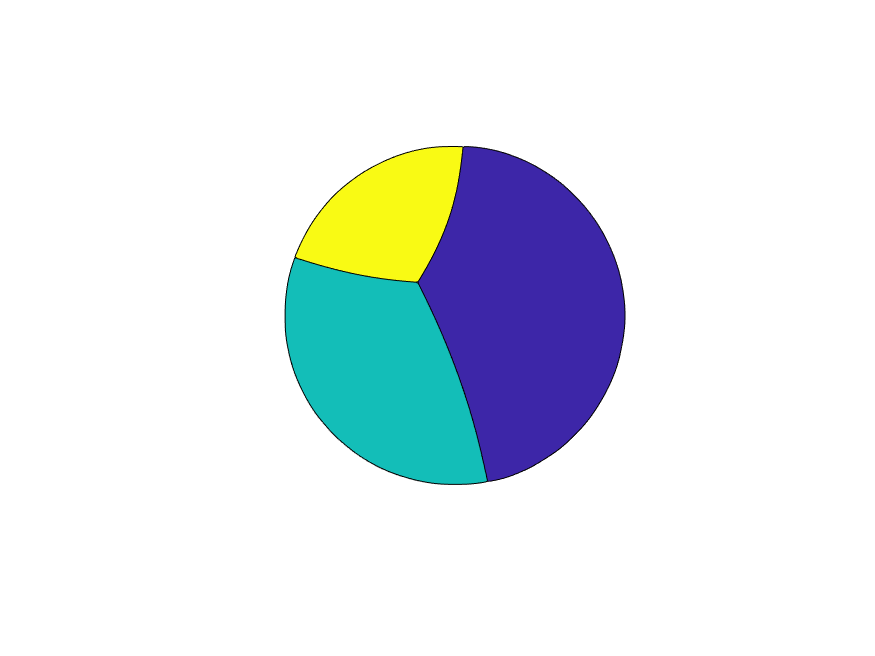}}&
		\raisebox{-.5\height}{\includegraphics[width=0.15\textwidth, clip, trim = 4cm 2.5cm 3cm 1.5cm]{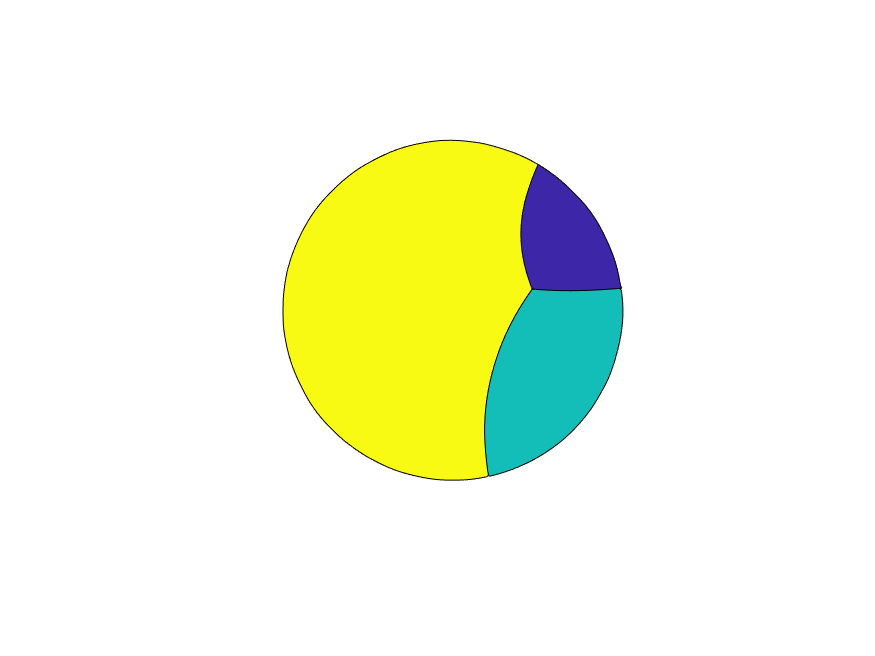}}\\
		\hline
        \makecell{Second\\method}  &
		\raisebox{-.5\height}{\includegraphics[width=0.15\textwidth, clip, trim = 4cm 2.5cm 3cm 1.5cm]{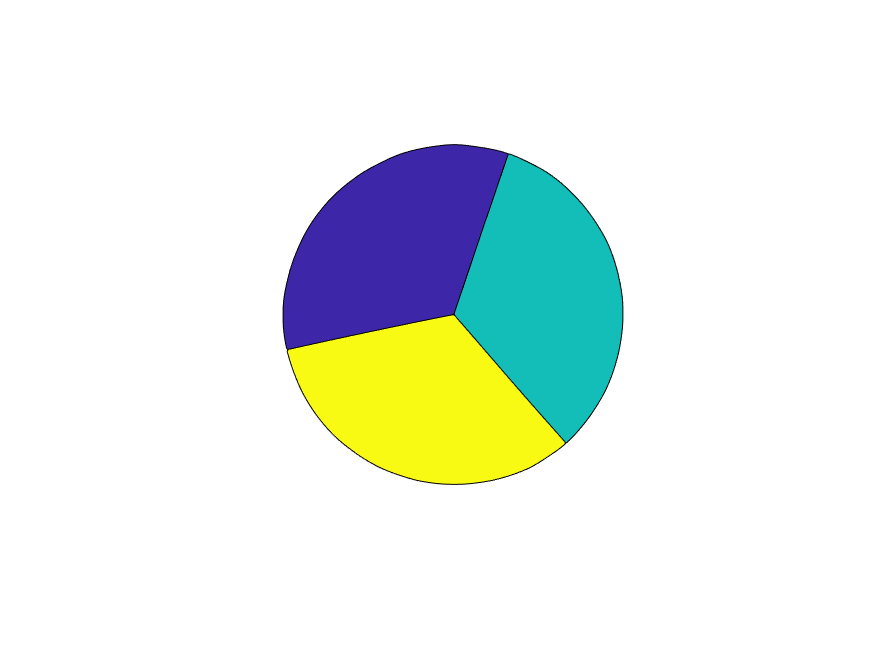}}&  
		\raisebox{-.5\height}{\includegraphics[width=0.15\textwidth, clip, trim = 4cm 2.5cm 3cm 1.5cm]{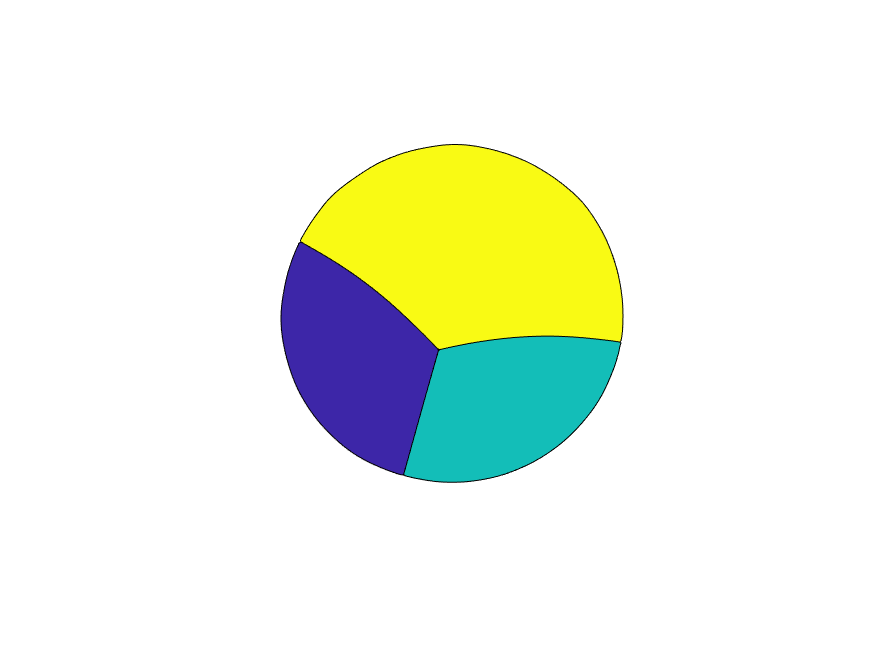}}&
		\raisebox{-.5\height}{\includegraphics[width=0.15\textwidth, clip, trim = 4cm 2.5cm 3cm 1.5cm]{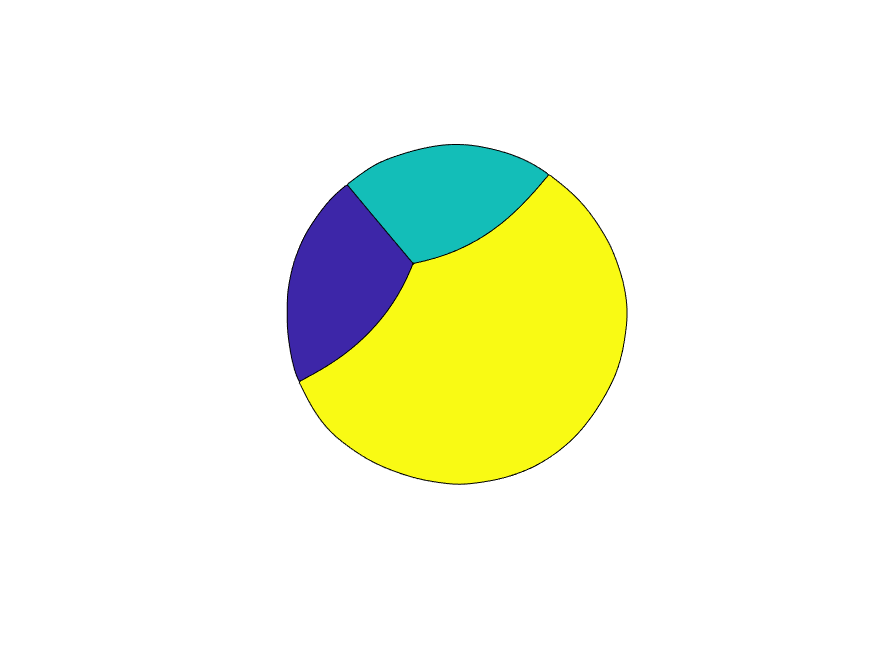}}&
		\raisebox{-.5\height}{\includegraphics[width=0.15\textwidth, clip, trim = 4cm 2.5cm 3cm 1.5cm]{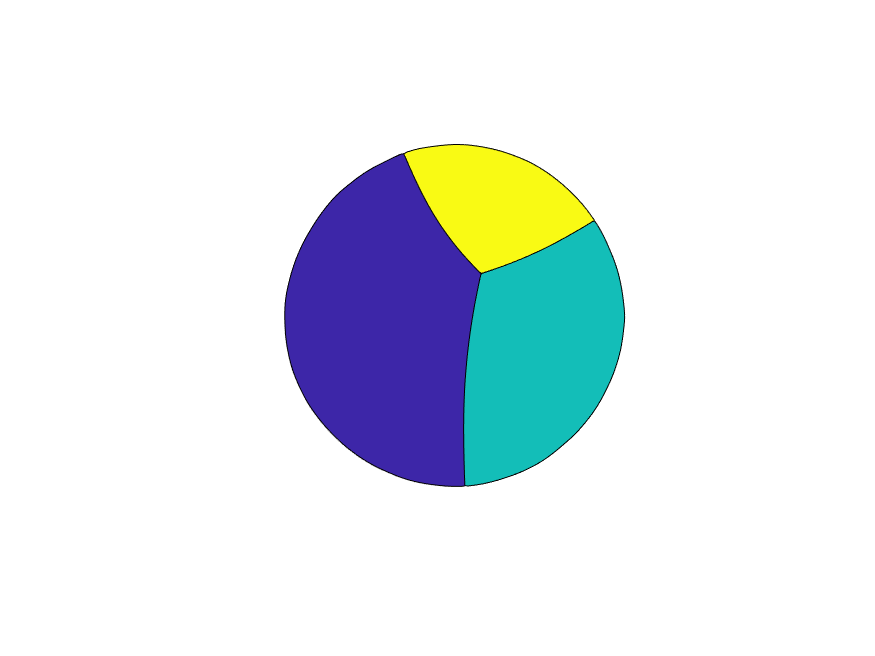}}&
		\raisebox{-.5\height}{\includegraphics[width=0.15\textwidth, clip, trim = 4cm 2.5cm 3cm 1.5cm]{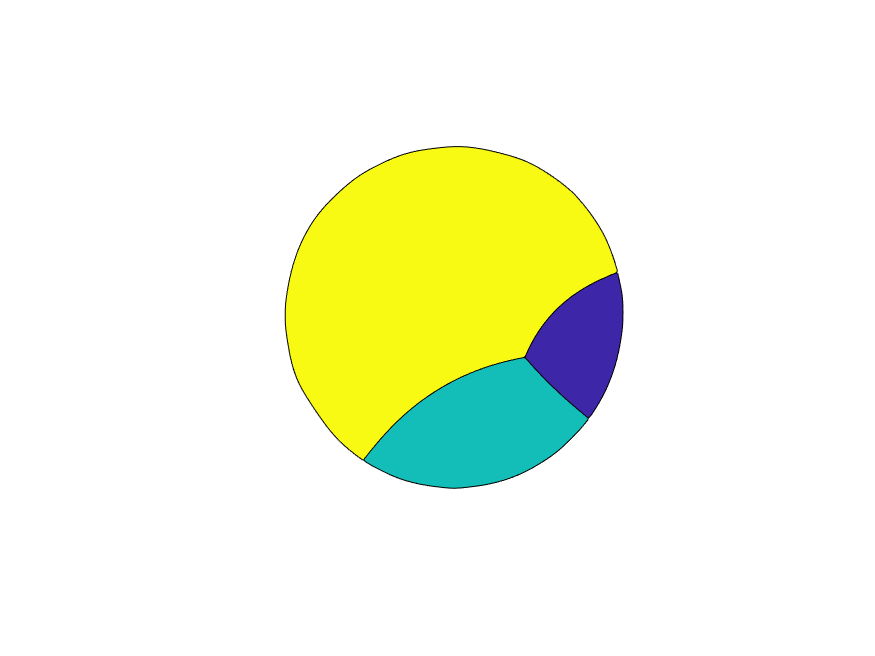}}\\
		\hline
	\end{tabular}
	\caption{Three partitions in two dimensions. The initial condition is set to be a five-petal flower. The total regions and their shortest partitions of both methods in the final iterations are plotted for different values of $\mathbf{c}$ of the three partition case.}\label{fig: unequal proportion}
\end{figure}

\subsubsection{Sensitivity to initial shape}

We test the sensitivity of the proposed methods to the initial shape. In this experiment, both methods are used to find the optimal solution for three partitions with equal proportions. The shape of initial condition is set to be a triangle, a rectangle, and randomly generated pentagons. The results are shown in Figure \ref{fig: different shape}. Both methods give the optimal total region to be a disc no matter how the shape of initial condition changes.

\begin{figure}[t!]
	\centering
	\begin{tabular}{|c|c|c|c|c|c|}
		\hline 
		   & Triangle & Rectangle & \multicolumn{3}{|c|}{Random pentagons}\\
        \hline
        \makecell{Initial\\condition} &
		\raisebox{-.5\height}{\includegraphics[width=0.15\textwidth, clip, trim = 4cm 2.5cm 3cm 1.5cm]{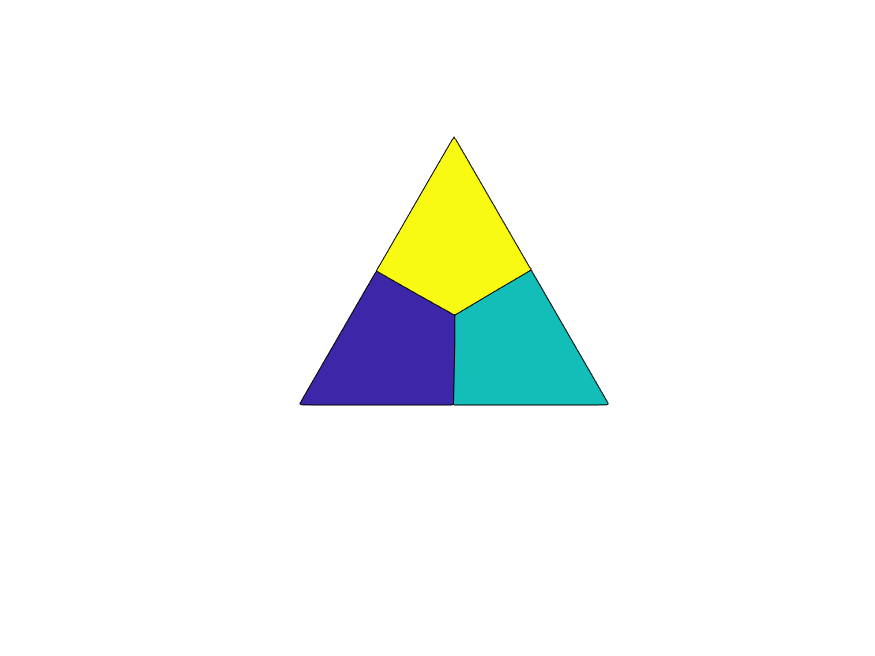}}&  
		\raisebox{-.5\height}{\includegraphics[width=0.15\textwidth, clip, trim = 3.5cm 2cm 2cm 1.5cm]{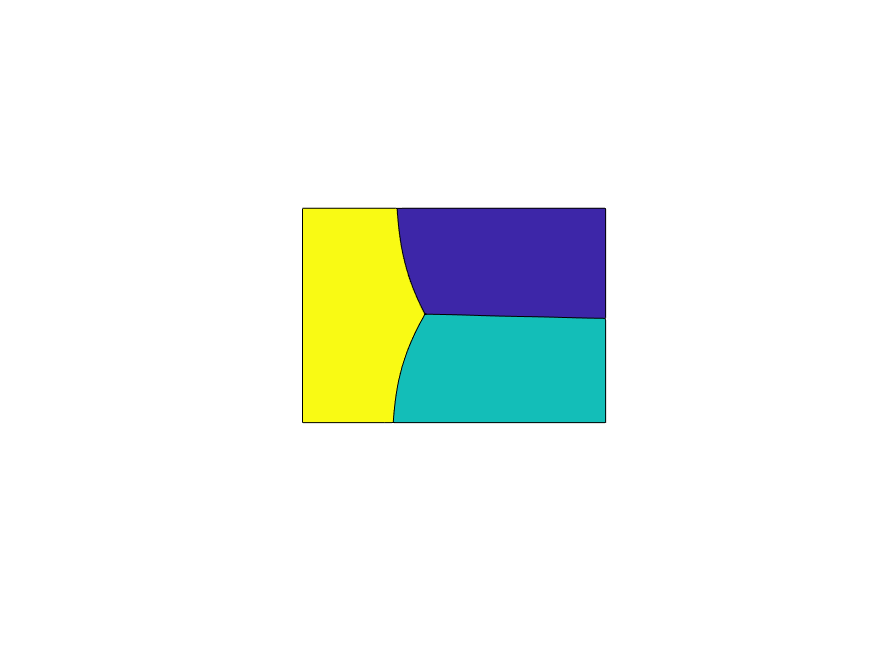}}&
		\raisebox{-.5\height}{\includegraphics[width=0.15\textwidth, clip, trim = 4.5cm 3.5cm 4cm 2cm]{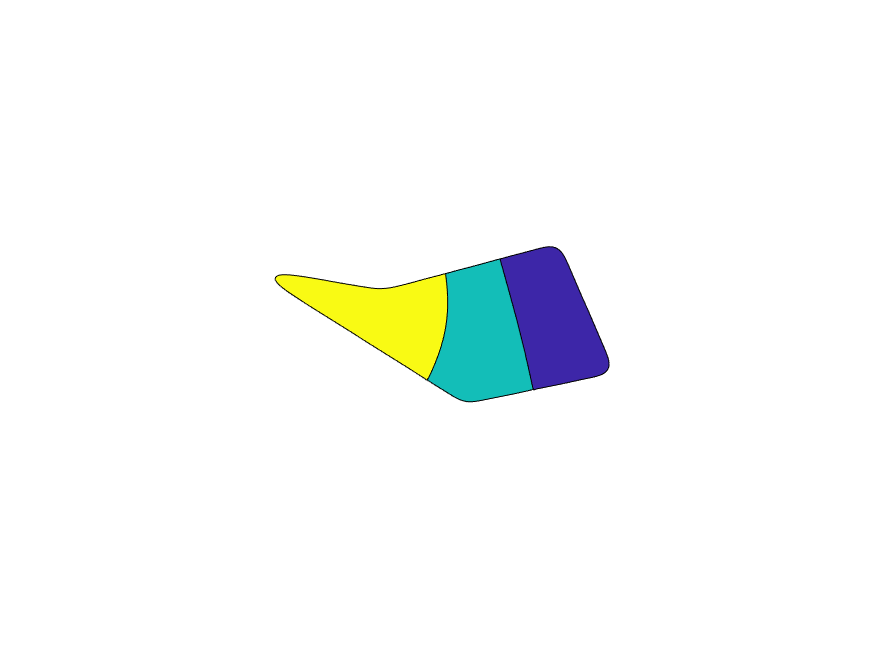}}&
        \raisebox{-.5\height}{\includegraphics[width=0.15\textwidth, clip, trim = 4.5cm 3.5cm 4cm 2cm]{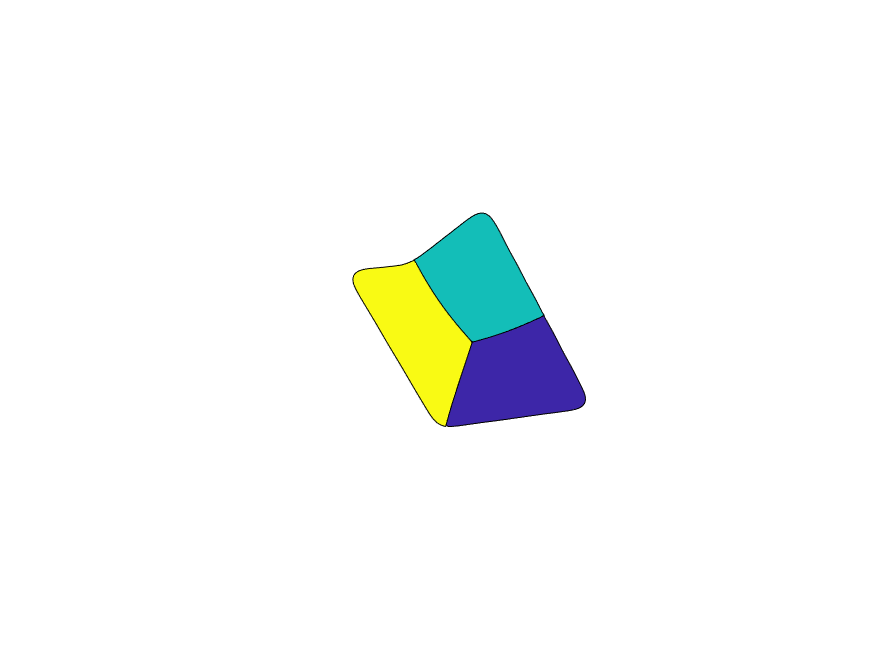}}&
		\raisebox{-.5\height}{\includegraphics[width=0.15\textwidth, clip, trim = 4.5cm 3.5cm 4cm 2cm]{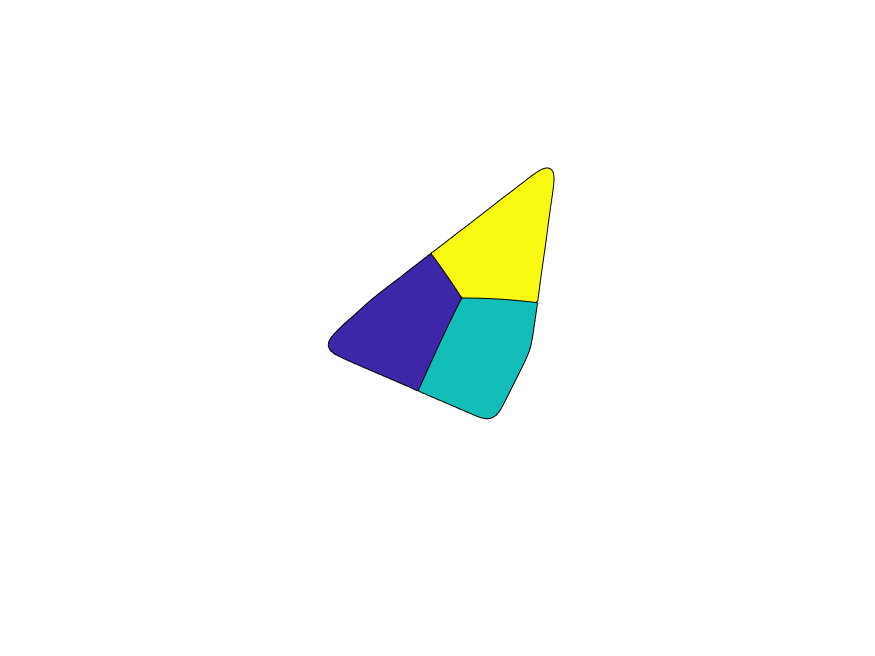}}\\ 
        \hline
        \makecell{First\\method}  &
		\raisebox{-.5\height}{\includegraphics[width=0.15\textwidth, clip, trim = 4cm 2.5cm 3cm 1.5cm]{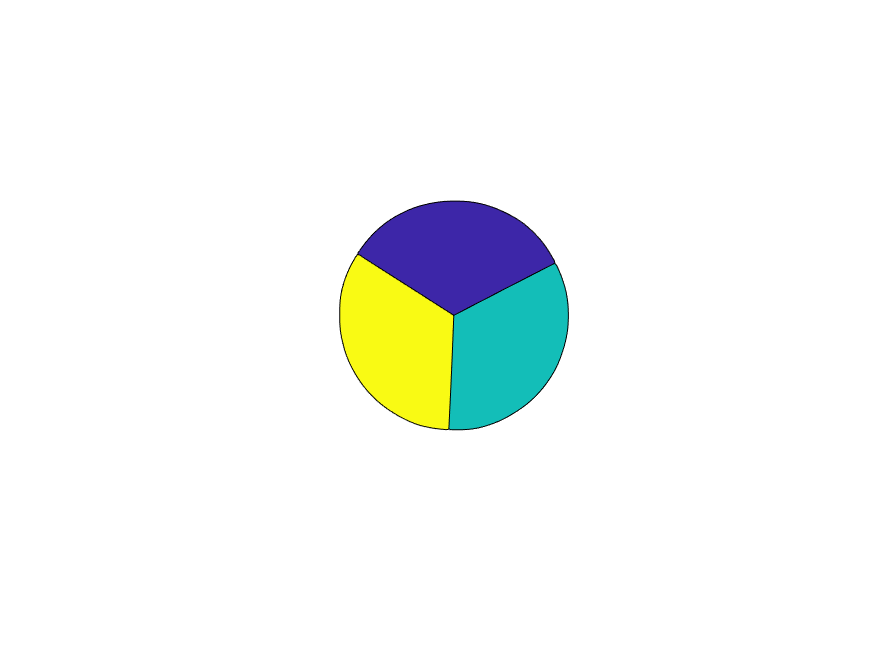}}&  
		\raisebox{-.5\height}{\includegraphics[width=0.15\textwidth, clip, trim = 3.5cm 2cm 2cm 1.5cm]{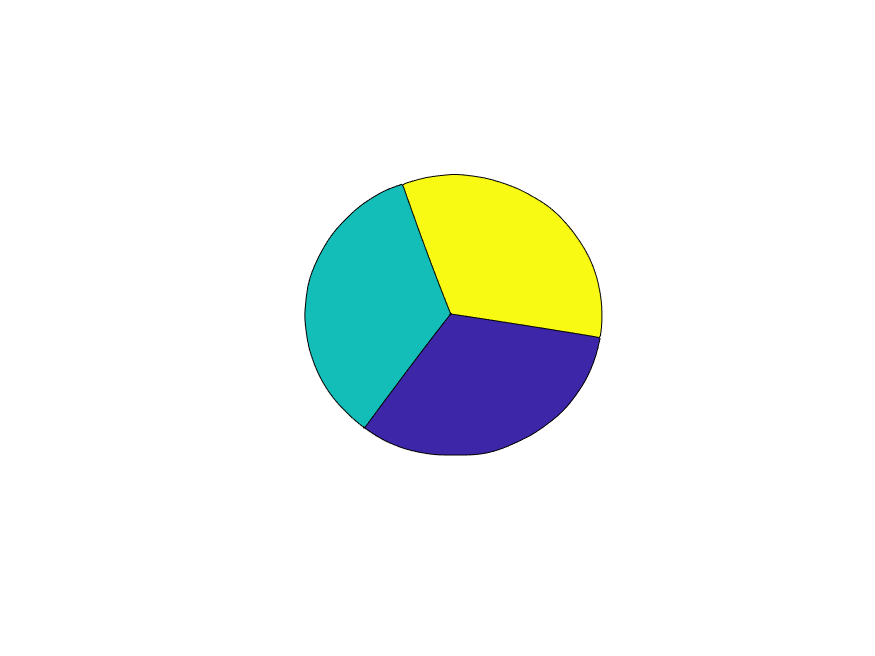}}&
		\raisebox{-.5\height}{\includegraphics[width=0.15\textwidth, clip, trim = 4.5cm 3cm 4cm 2.5cm]{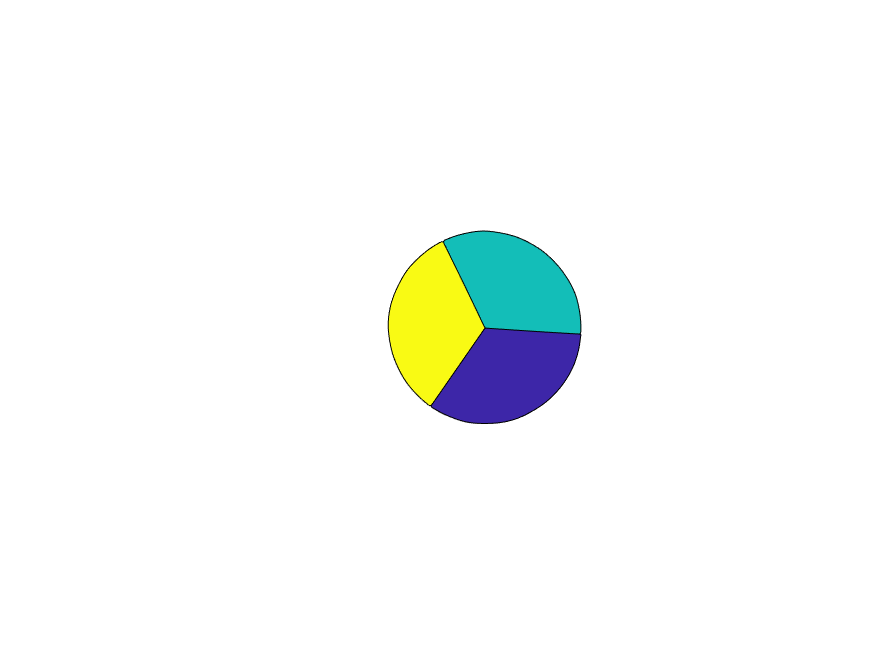}}&
		\raisebox{-.5\height}{\includegraphics[width=0.15\textwidth, clip, trim = 4.5cm 3cm 4cm 2.5cm]{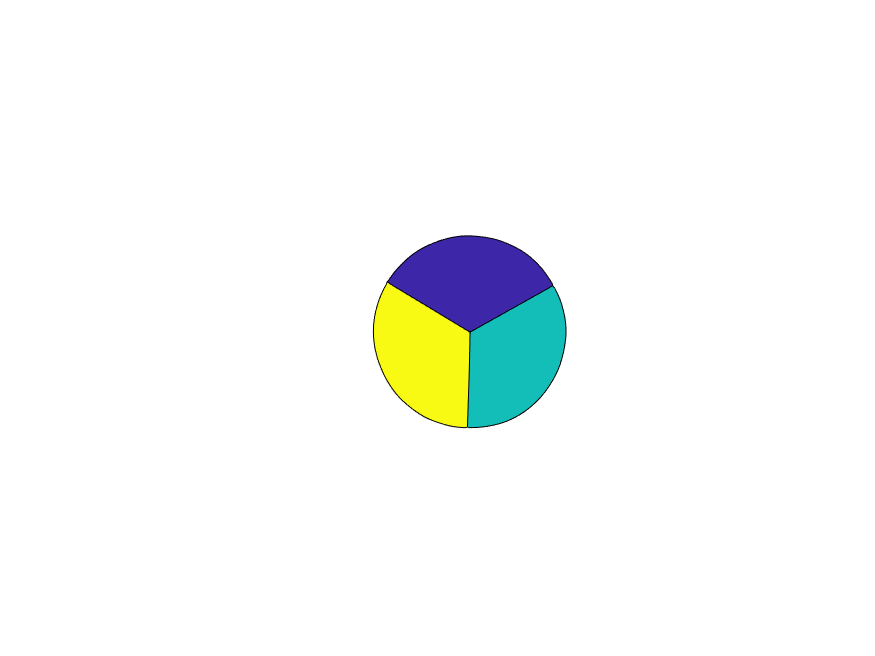}}&
		\raisebox{-.5\height}{\includegraphics[width=0.15\textwidth, clip, trim = 4.5cm 3.5cm 4cm 2cm]{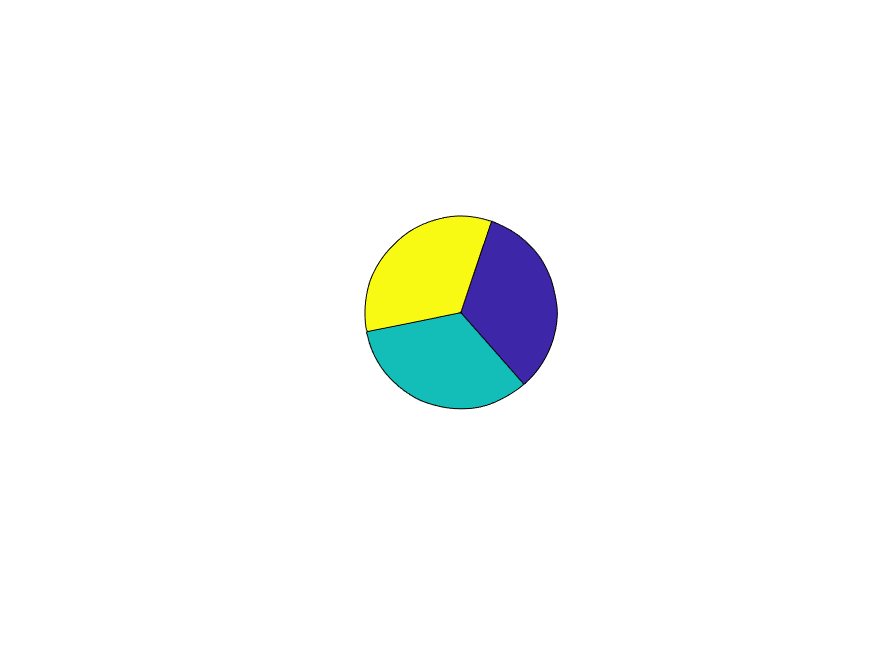}}\\
		\hline
        \makecell{Second\\method}  &
		\raisebox{-.5\height}{\includegraphics[width=0.15\textwidth, clip, trim = 4cm 2.5cm 3cm 1.5cm]{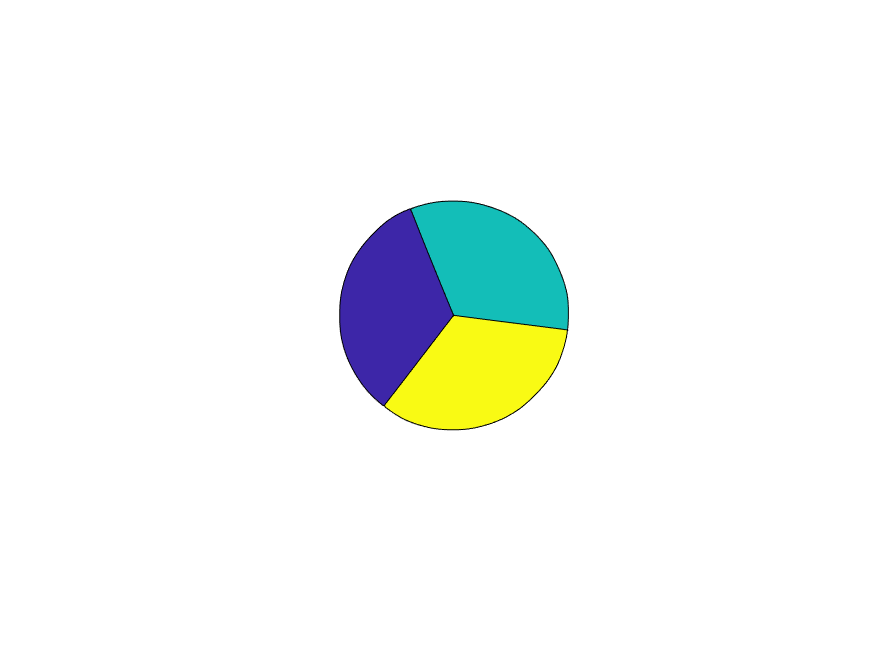}}&  
		\raisebox{-.5\height}{\includegraphics[width=0.15\textwidth, clip, trim = 3.5cm 2cm 2cm 1.5cm]{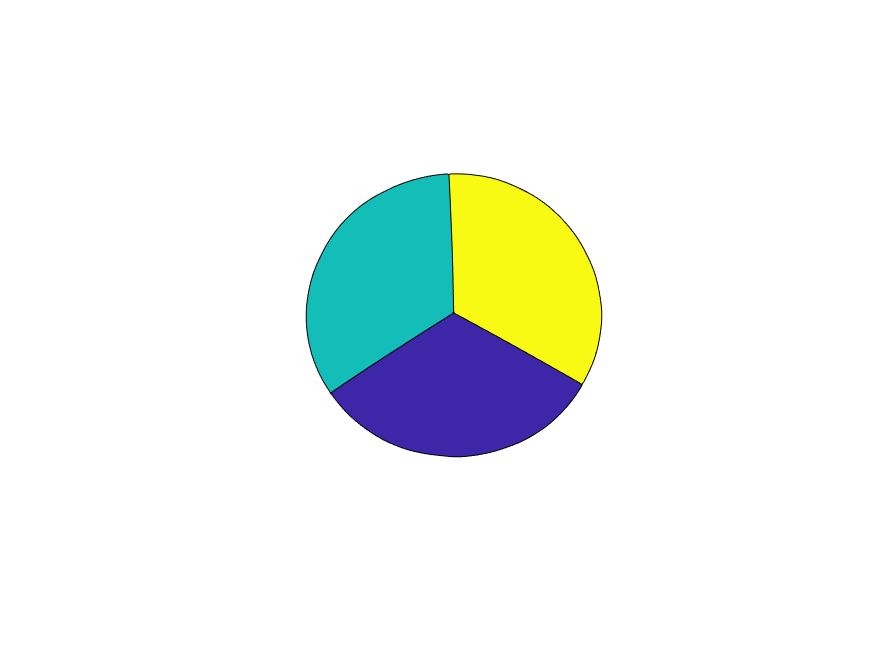}}&
		\raisebox{-.5\height}{\includegraphics[width=0.15\textwidth, clip, trim = 4.5cm 3cm 4cm 2.5cm]{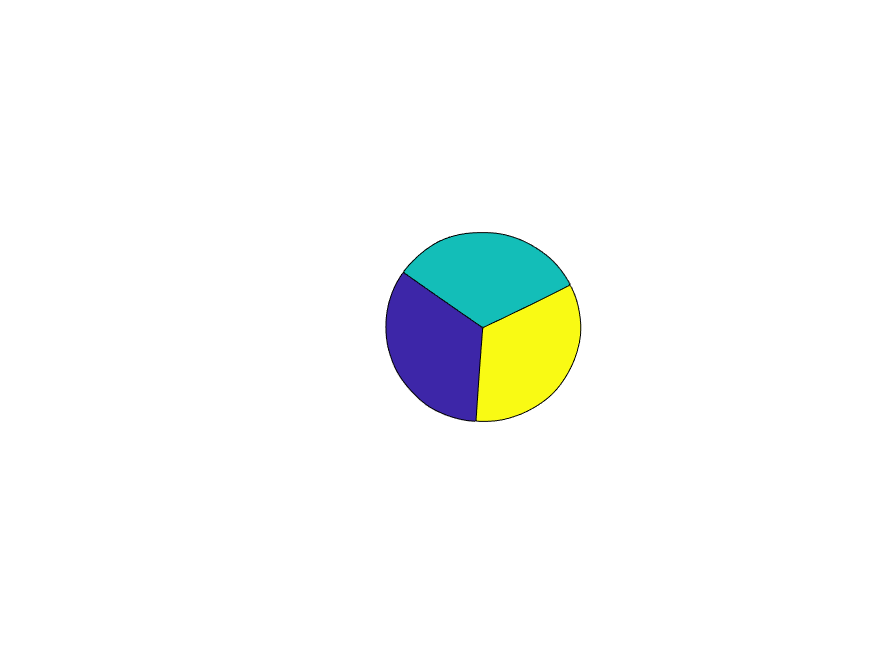}}&
		\raisebox{-.5\height}{\includegraphics[width=0.15\textwidth, clip, trim = 4.5cm 3cm 4cm 2.5cm]{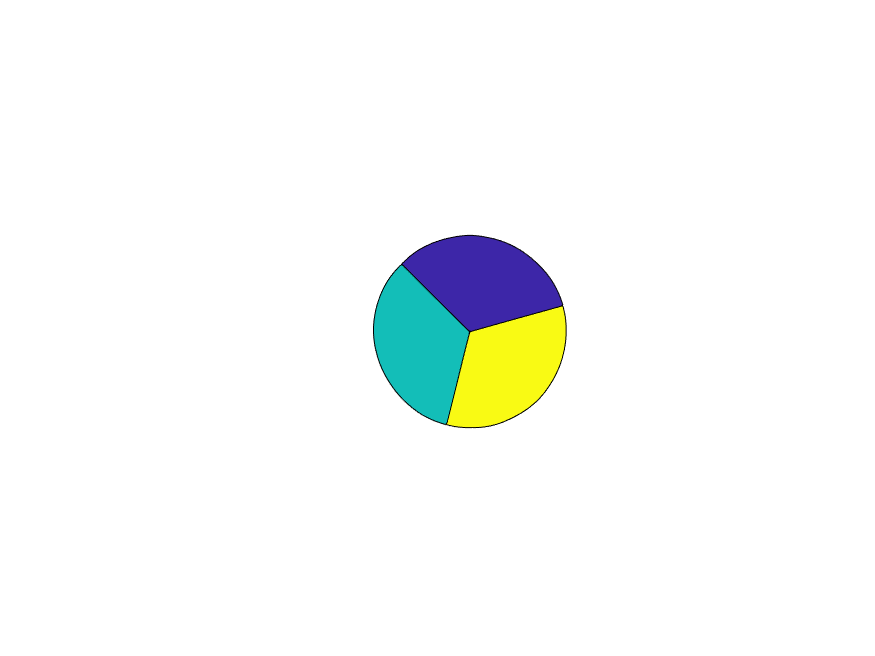}}&
		\raisebox{-.5\height}{\includegraphics[width=0.15\textwidth, clip, trim = 4.5cm 3.5cm 4cm 2cm]{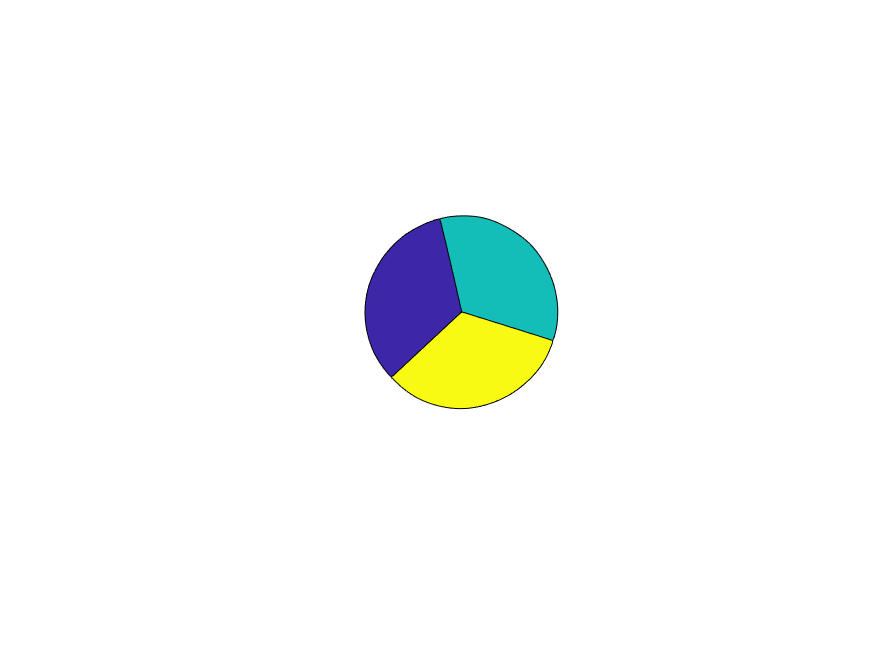}}\\
		\hline
	\end{tabular}
	\caption{Three partitions in two dimensions, sensitivity to initial shape. The proportion is set to be $\mathbf{c}=(\frac{1}{3},\frac{1}{3},\frac{1}{3})$. The total regions and their shortest partitions of both methods in the final iterations are plotted for different shape of initial conditions.}\label{fig: different shape}
\end{figure}

\subsubsection{Six and nine partitions}
In this set of experiments, the initial condition is set to be the five-petal flower, whose indicator function is given in Section \ref{sec.numerical.2d.two}. The two methods are tested to solve the six-partition and nine-partition problems. For each problem, the volume proportions of every part in the partitions are set to be equal. The total regions and their shortest partitions of both methods in the initial, 5th, 10th, 20th and final iterations are plotted in Figure \ref{fig: multi-partition}.

\begin{figure}[t!]
	\centering
	\begin{tabular}{|c|c|c|c|c|c|}
		\hline 
		 & Initial & 5th iteration & 10th iteration & Final  \\
        \hline
        \makecell{First\\method\\six\\ partitions} &
		\raisebox{-.5\height}{\includegraphics[width=0.15\textwidth, clip, trim = 4cm 2.5cm 3cm 1.5cm]{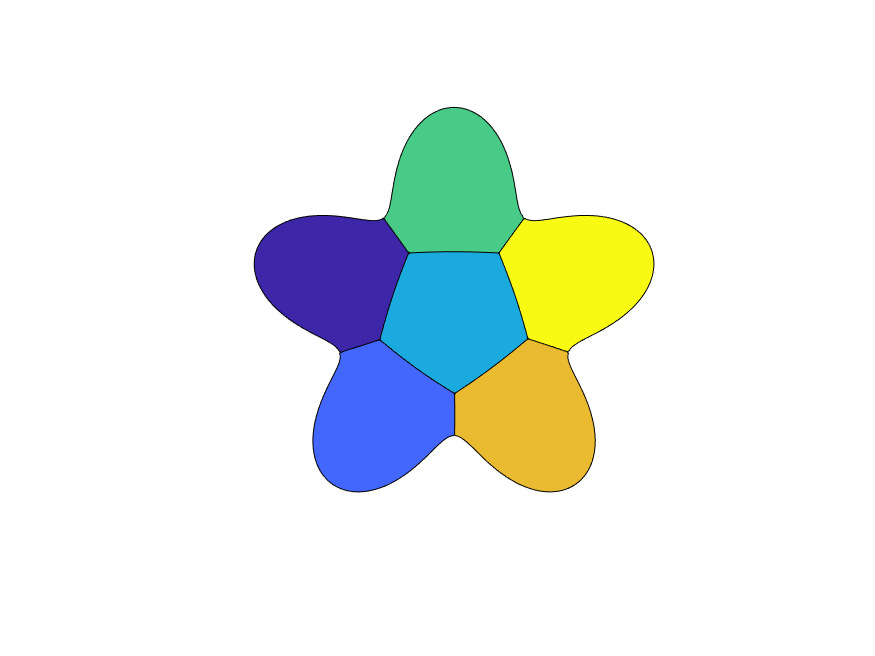}}&  
		\raisebox{-.5\height}{\includegraphics[width=0.15\textwidth, clip, trim = 4cm 2.5cm 3cm 1.5cm]{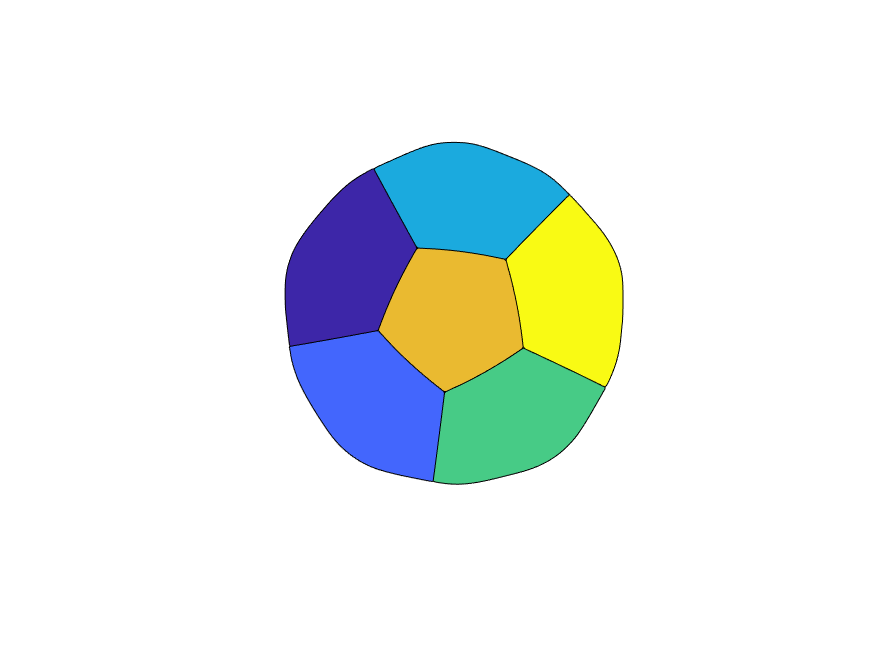}}&
		\raisebox{-.5\height}{\includegraphics[width=0.15\textwidth, clip, trim = 4cm 2.5cm 3cm 1.5cm]{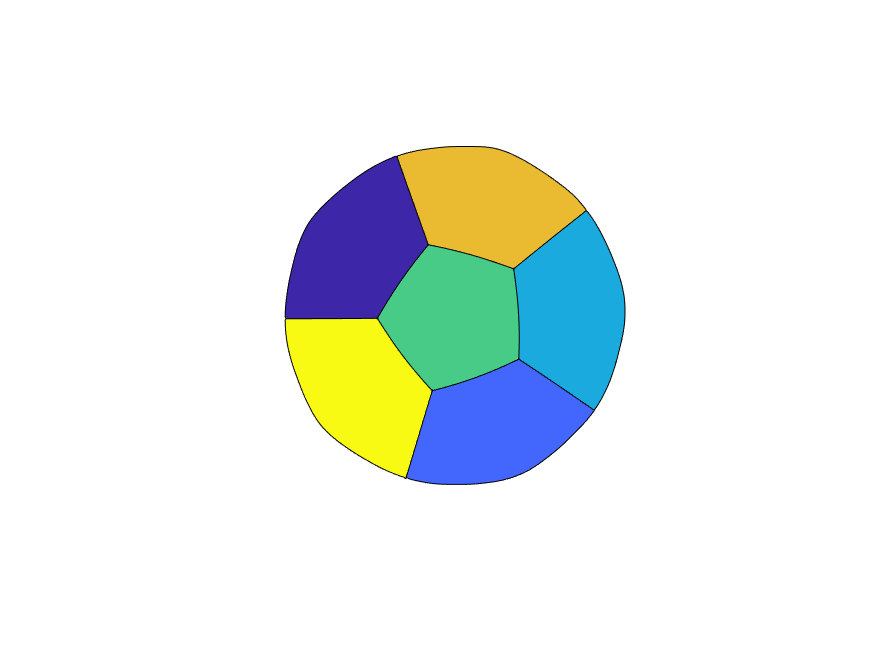}}&
		\raisebox{-.5\height}{\includegraphics[width=0.15\textwidth, clip, trim = 4cm 2.5cm 3cm 1.5cm]{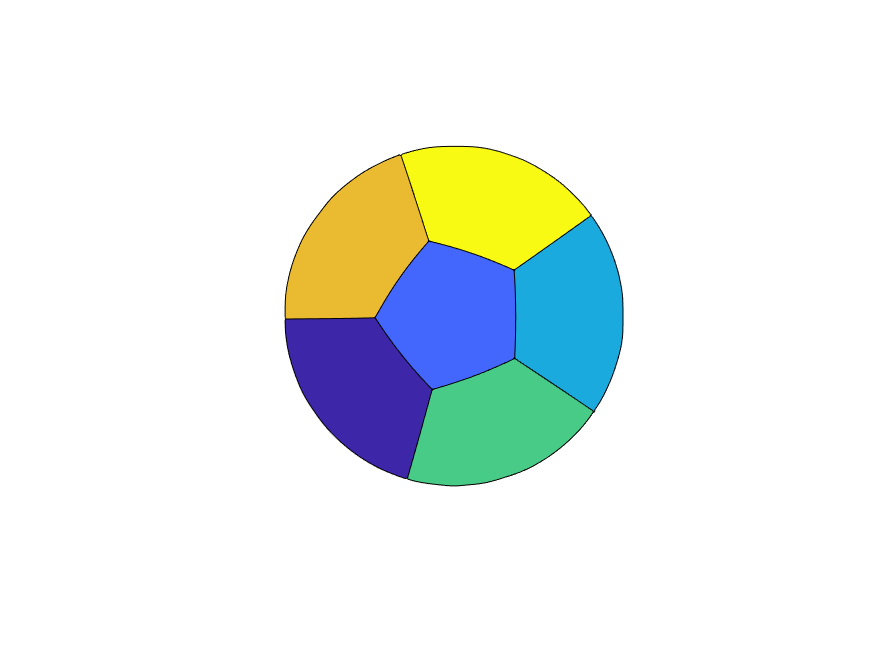}}\\ 
        \hline
        \makecell{Second\\method\\six\\ partitions}  &
		\raisebox{-.5\height}{\includegraphics[width=0.15\textwidth, clip, trim = 4cm 2.5cm 3cm 1.5cm]{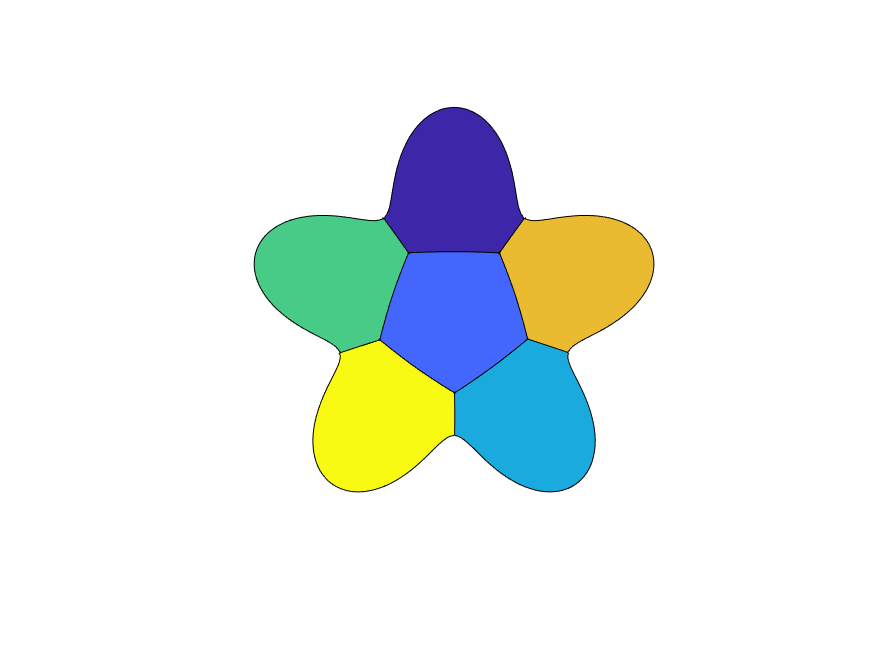}}&  
		\raisebox{-.5\height}{\includegraphics[width=0.15\textwidth, clip, trim = 4cm 2.5cm 3cm 1.5cm]{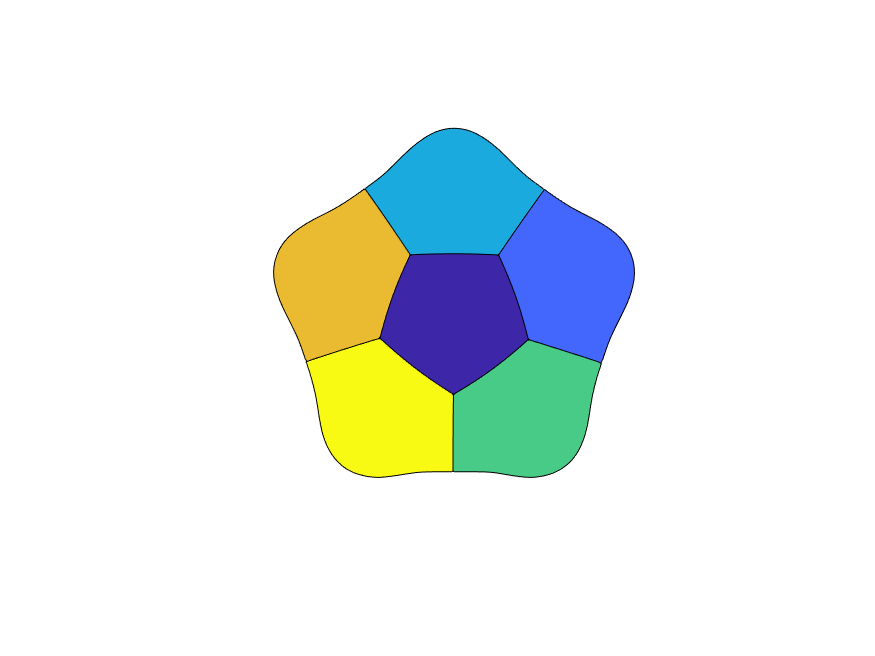}}&
		\raisebox{-.5\height}{\includegraphics[width=0.15\textwidth, clip, trim = 4cm 2.5cm 3cm 1.5cm]{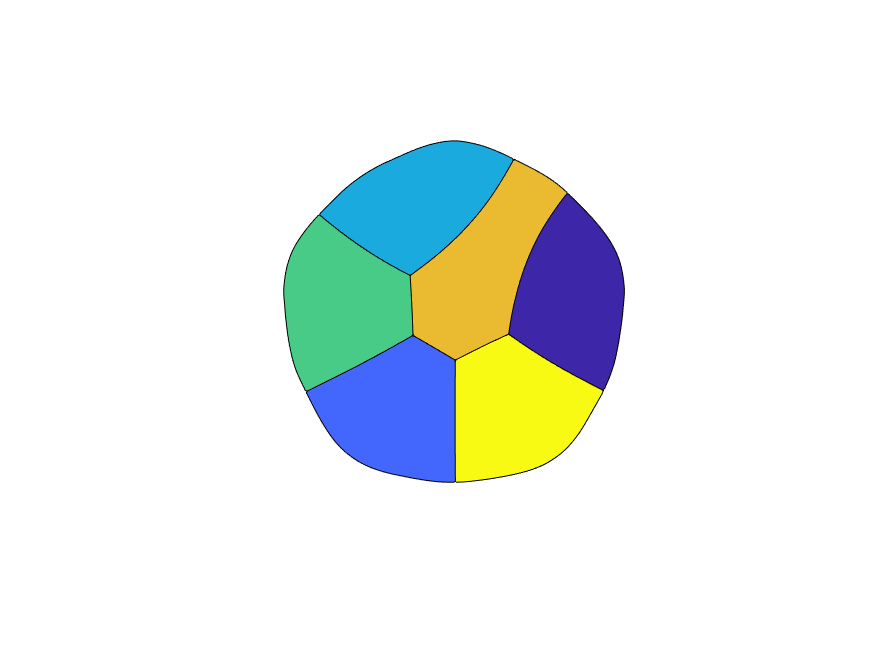}}&
		\raisebox{-.5\height}{\includegraphics[width=0.15\textwidth, clip, trim = 4cm 2.5cm 3cm 1.5cm]{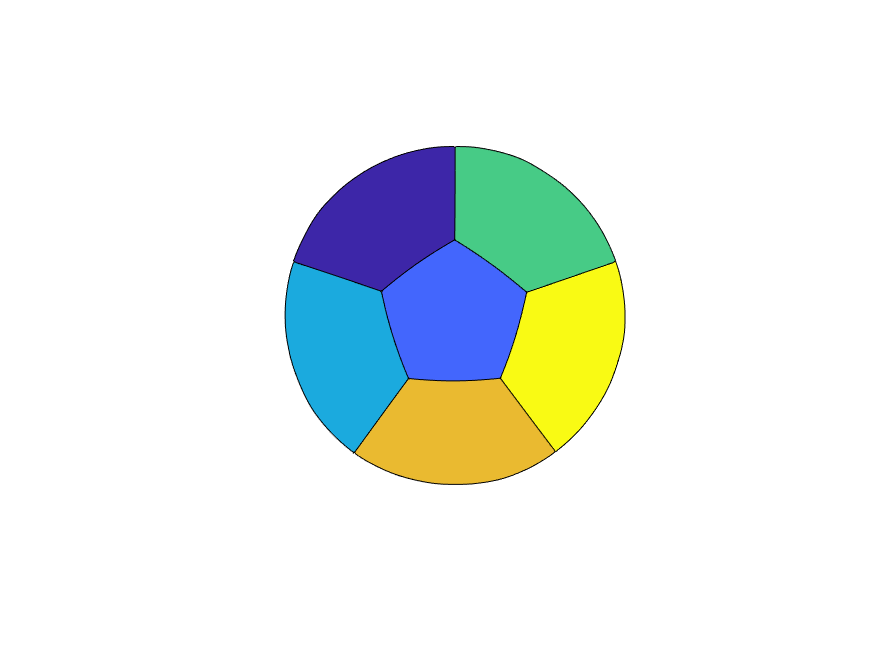}}\\
		\hline
        \makecell{First\\method\\nine\\ partitions}  &
		\raisebox{-.5\height}{\includegraphics[width=0.15\textwidth, clip, trim = 4cm 2.5cm 3cm 1.5cm]{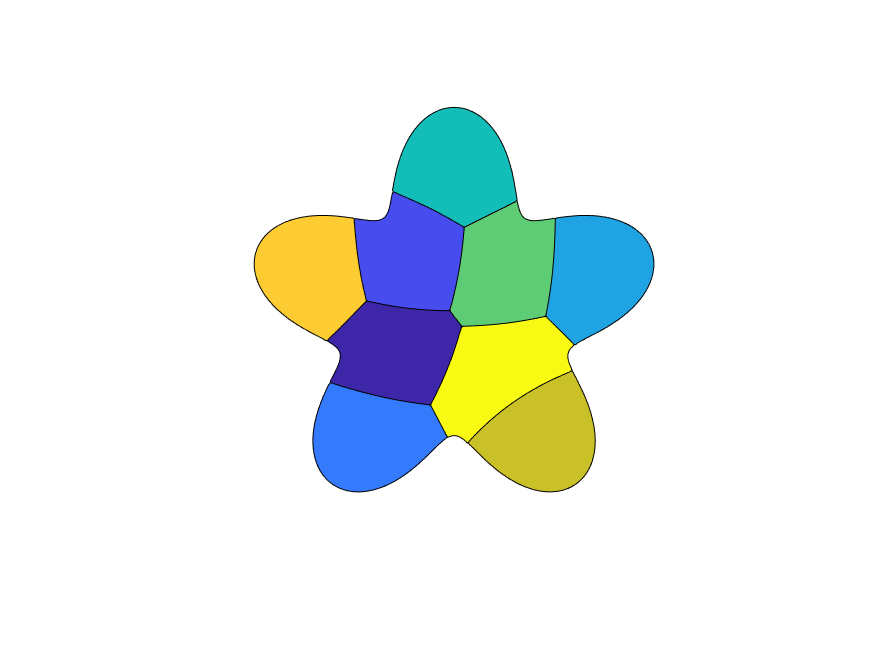}}&  
		\raisebox{-.5\height}{\includegraphics[width=0.15\textwidth, clip, trim = 4cm 2.5cm 3cm 1.5cm]{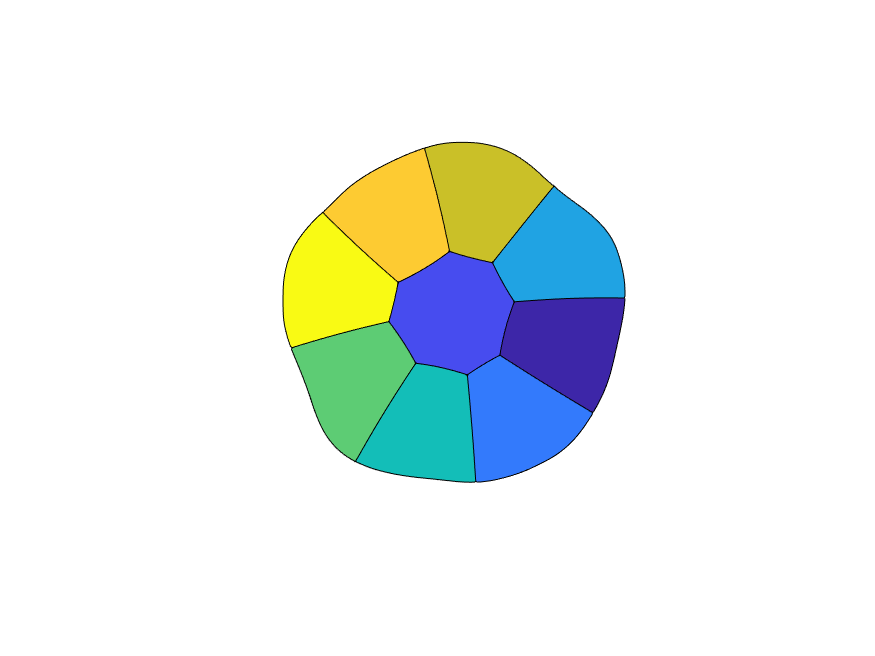}}&
		\raisebox{-.5\height}{\includegraphics[width=0.15\textwidth, clip, trim = 4cm 2.5cm 3cm 1.5cm]{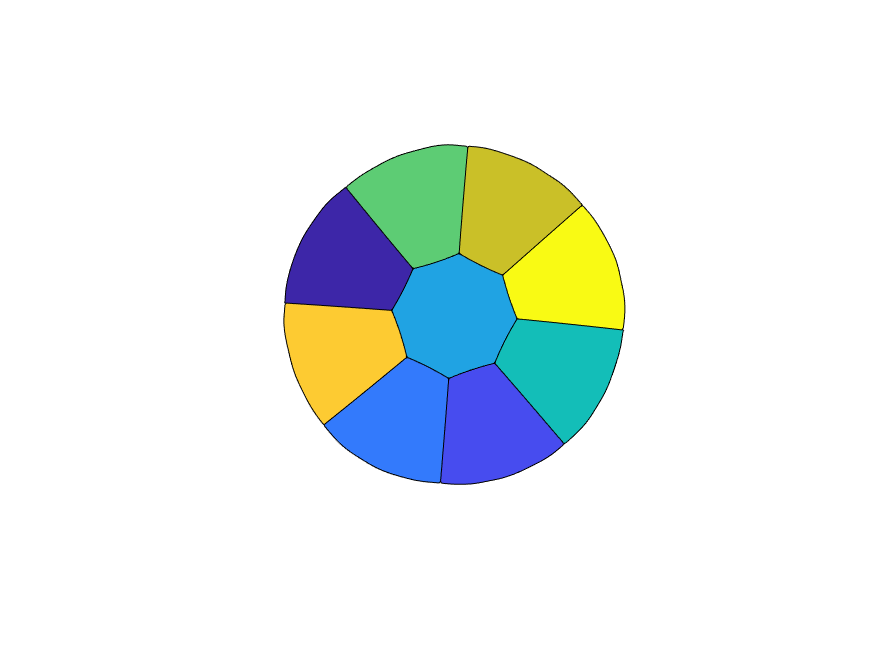}}&
		\raisebox{-.5\height}{\includegraphics[width=0.15\textwidth, clip, trim = 4cm 2.5cm 3cm 1.5cm]{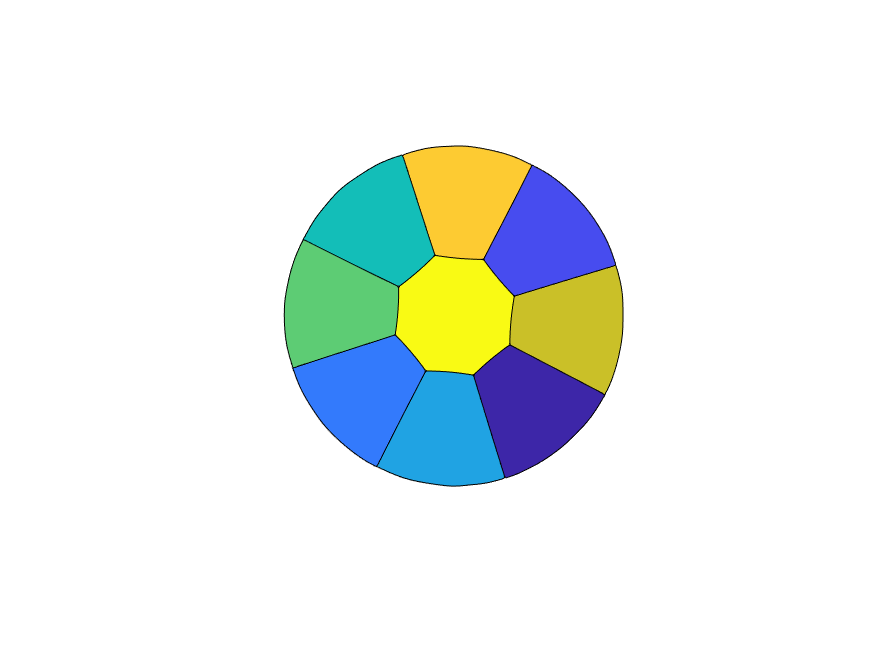}}\\
		\hline
        \makecell{Second\\method\\nine\\ partitions}  &
		\raisebox{-.5\height}{\includegraphics[width=0.15\textwidth, clip, trim = 4cm 2.5cm 3cm 1.5cm]{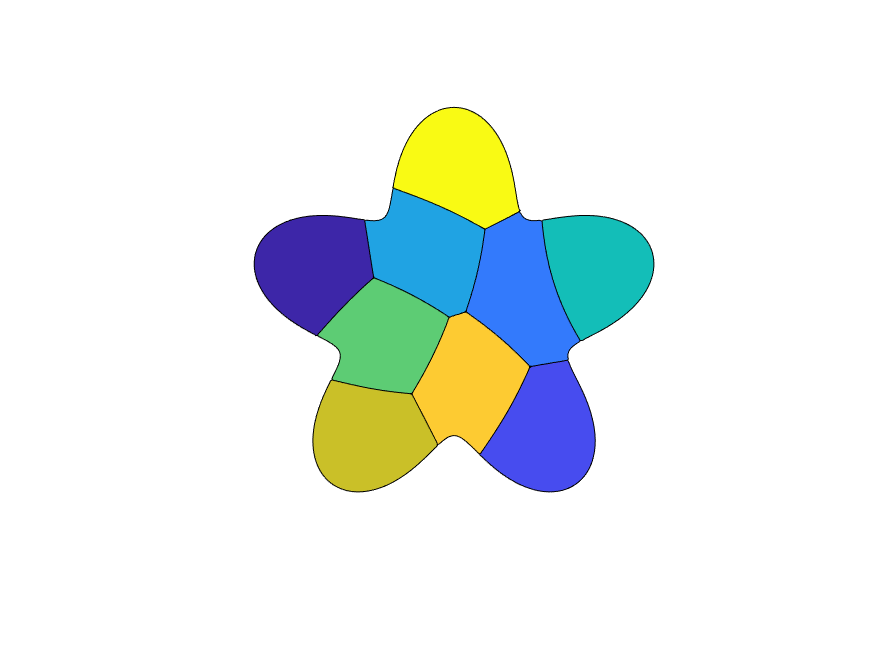}}&  
		\raisebox{-.5\height}{\includegraphics[width=0.15\textwidth, clip, trim = 4cm 2.5cm 3cm 1.5cm]{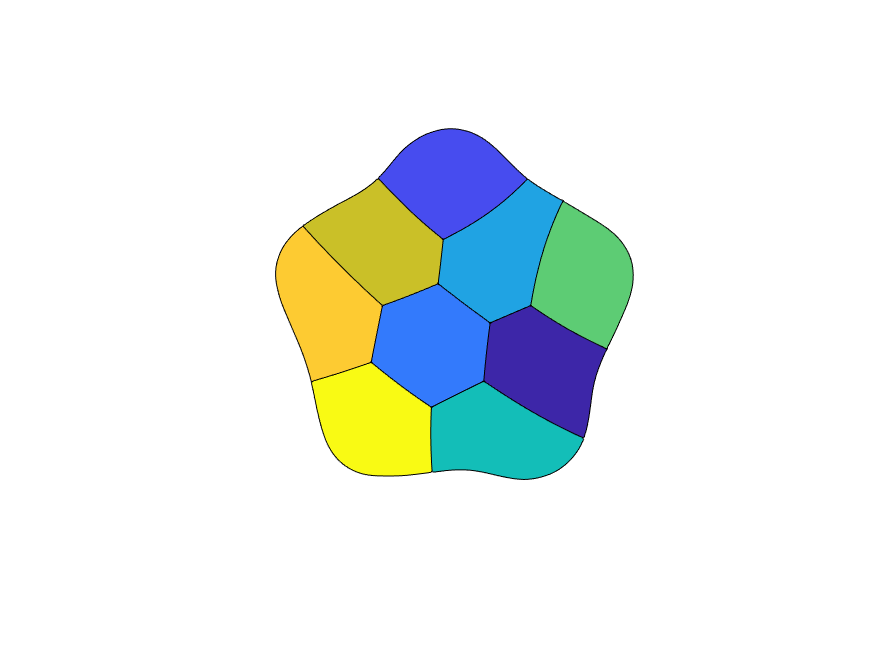}}&
		\raisebox{-.5\height}{\includegraphics[width=0.15\textwidth, clip, trim = 4cm 2.5cm 3cm 1.5cm]{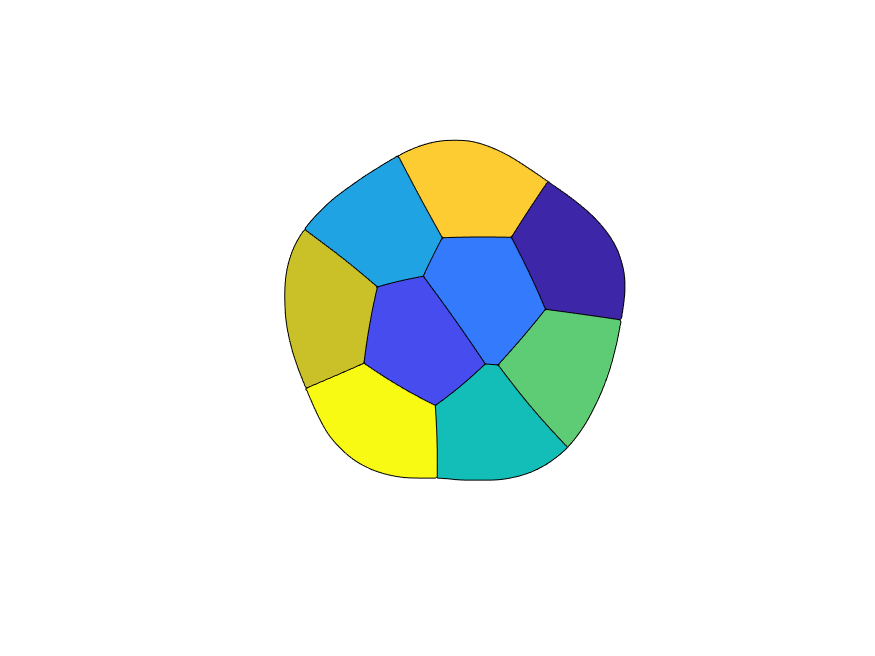}}&
		\raisebox{-.5\height}{\includegraphics[width=0.15\textwidth, clip, trim = 4cm 2.5cm 3cm 1.5cm]{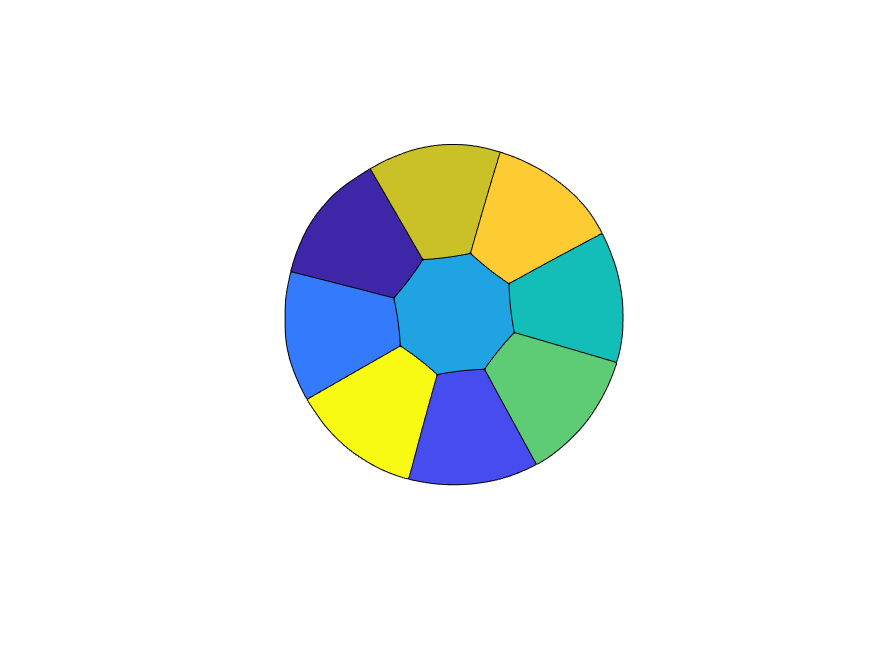}}\\
		\hline
	\end{tabular}
	\caption{Six and nine uniform partitions in two dimensions. The initial condition is set to be a five-petal flower. The total regions and their shortest partitions of both methods in the initial, $5th$, $10th$, and final iterations are plotted. }\label{fig: multi-partition}
\end{figure}

The evolution of the approximate objective functional $\Tilde{E}_{\tau}(u_{\Omega}^{k},(u_i^k)_{i=1}^{n})$ for both problems are plotted in Figure \ref{fig:energy multi-partition}. We observe some irregular protrusions in the objective functional evolution. This is because the auction dynamics may give a local shortest partition instead of a global one in some iterations. Then the approximate objective functional is computed to be larger than the real objective functional, which leads to a protrusion. Since the first method repeats the auction dynamics several times, it is more likely to get a global shortest partition than the second method. Thus the first method has less protrusions on the objective functional evolution. This implies that the first method is more stable than the second one. However, since it repeats auction dynamics many times, the first method requres more time than the second one in general. 
We show the CPU times of both methods with different numbers of partitions in Table \ref{tab:cpu time 2d}. As mentioned above, even though the first method requires fewer iterations, since it repeats auction dynamics several times, its overall CPU time is higher than that of the second method.

\begin{figure}[t!]
    \centering
    \subfigure[First method with six partitions]{\includegraphics[width=0.49\textwidth, clip, trim = 1cm 0.8cm 0.5cm 0.5cm]{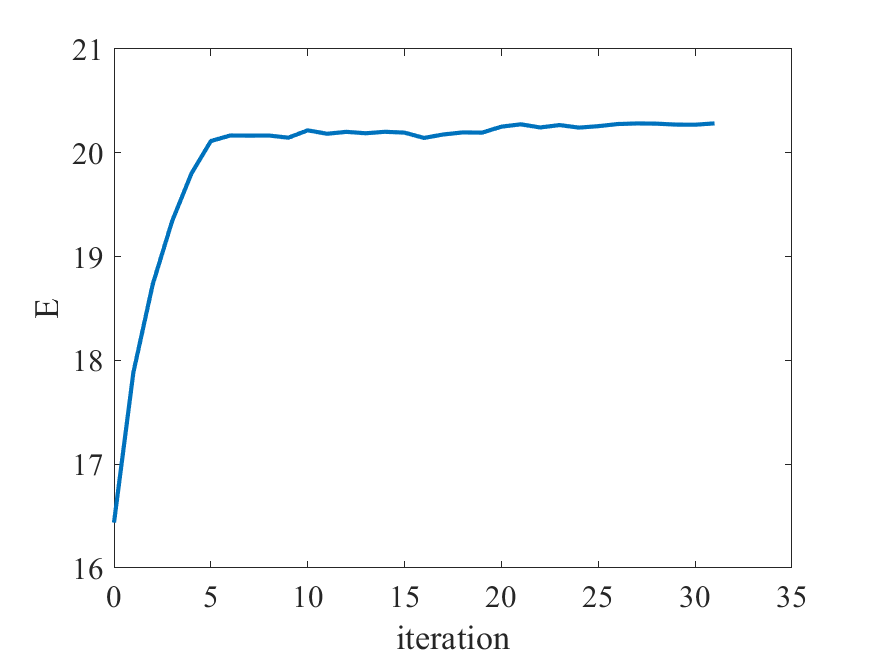}}
    \subfigure[First method with nine partitions]{\includegraphics[width=0.49\textwidth, clip, trim = 1cm 0.8cm 0.5cm 0.5cm]{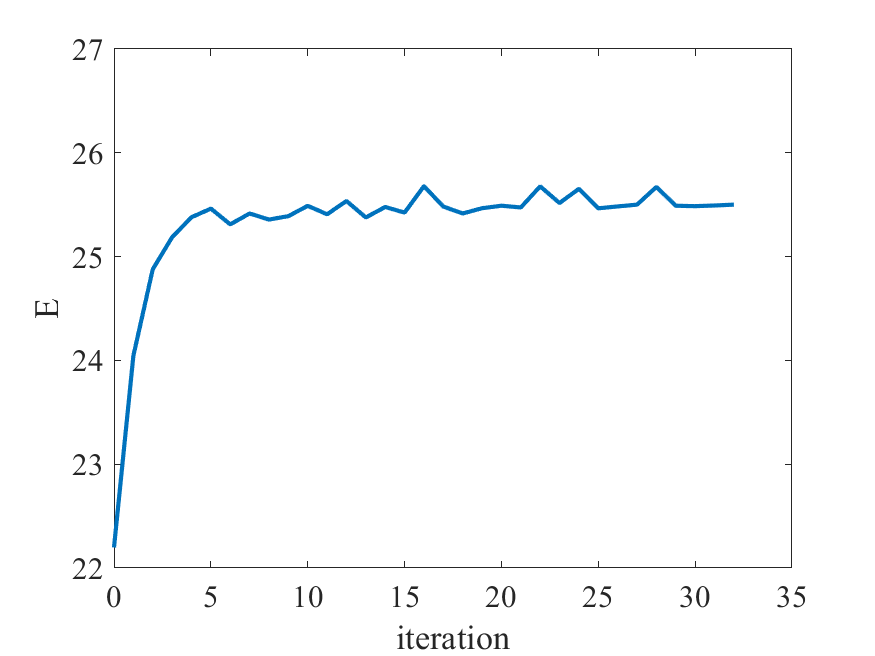}}
    \subfigure[Second method with six partitions]{\includegraphics[width=0.49\textwidth, clip, trim = 1cm 0.8cm 0.5cm 0.5cm]{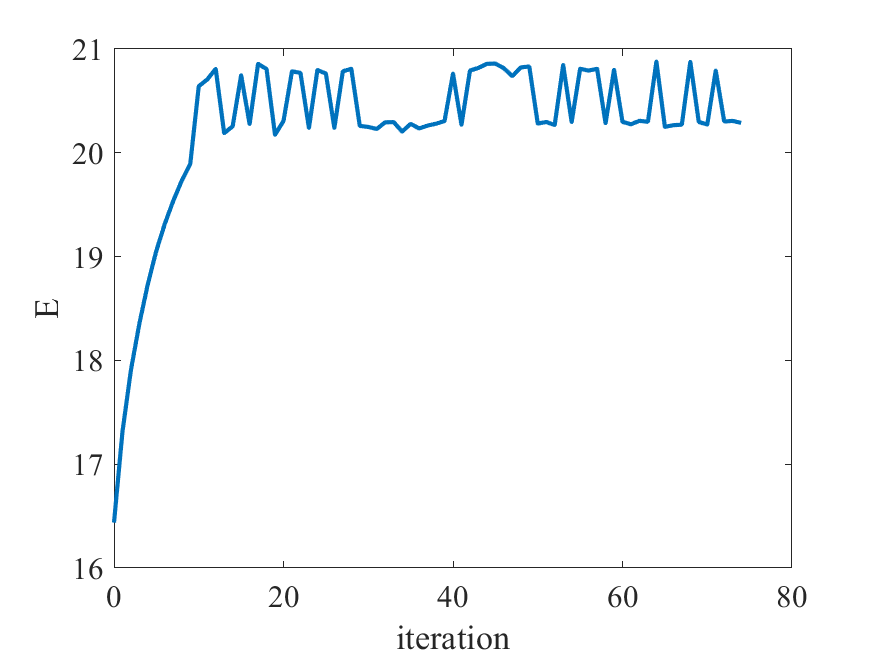}}
    \subfigure[Second method with nine partitions]{\includegraphics[width=0.49\textwidth, clip, trim = 1cm 0.8cm 0.5cm 0.5cm]{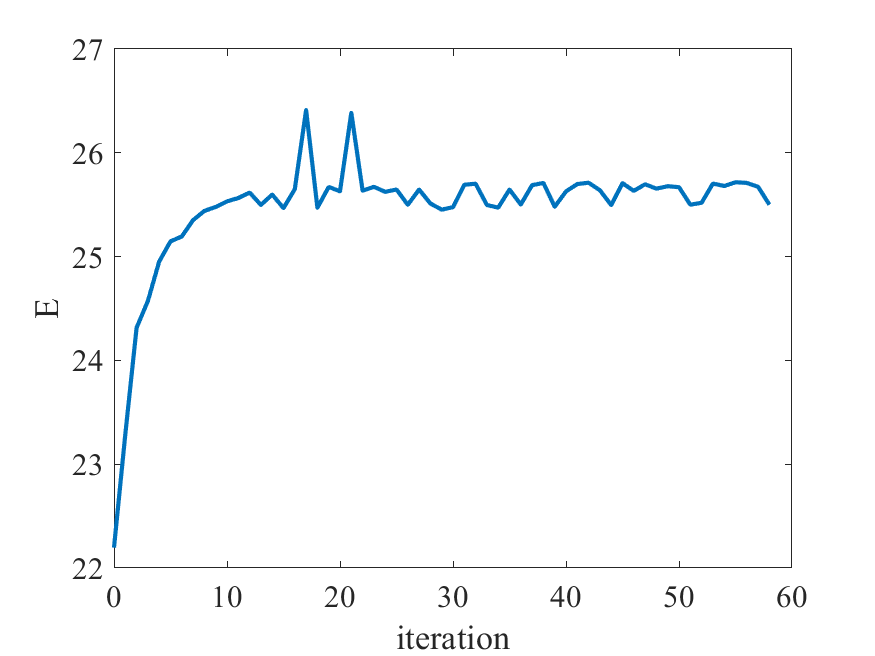}}
    \caption{Six and nine uniform partitions in two dimensions. The approximate objective functionals $\Tilde{E}_{\tau}(u_{\Omega}^{k},(u_i^k)_{i=1}^{n})$ of two methods versus iteration times. The horizontal axis represents the number of iterations. The vertical axis represents the functional value.}
    \label{fig:energy multi-partition}
\end{figure}

\begin{table}[t!]
    \centering
    \begin{tabular}{|c|c|c|c|c|}
    \hline
    \multirow{2}{*}{No. of partitions}& \multicolumn{2}{c|}{No. of Iter.} & \multicolumn{2}{c|}{CPU time(s)}\\
    \cline{2-5}
    & First method & Second method& First method & Second method\\
    \hline
      2 &34&63&  281.49 & 112.72 \\
      6 &31&74& 995.67 & 427.05 \\
      9 &32&59& 1611.79 & 650.43 \\
      \hline
    \end{tabular}
    \caption{Uniform partitions in two dimensions. Number of iterations and CPU times of both methods with different numbers of partition.} 
    \label{tab:cpu time 2d}
\end{table}


\subsection{Numerical experiments in three dimensions}
To simulate the problem in three-dimensional cases, an $128\times128\times128$ regular grid is formed in $[-\pi,\pi]^3$. The tolerance of objective functional rate is set to be $r_{tol}=0.0005$. For the first method, auction dynamics repeats $p=3$ times in each iteration. Other parameters are the same as in two-dimensional cases. The initial total region is set to be a cube. We test both methods for three-partition, six-partition and nine-partition cases. The volume proportions of every part in the partitions are set to be equal in each case.
 The results of both methods are shown in Figure \ref{fig: multi-partition 3d}, together with an expanded view and a translucent expanded view.
 
\begin{figure}[t!]
	\centering
	\begin{tabular}{|c|c|c|c|c|c|c|}
		\hline 
		 & \multicolumn{3}{|c|}{First method} & \multicolumn{3}{|c|}{Second method} \\
        \hline
        \makecell{Partition \\number} & 3 & 6 & 9 & 3 & 6 & 9\\
        \hline
        \makecell{Final \\iteration} &
		\raisebox{-.5\height}{\includegraphics[width=0.12\textwidth, clip, trim = 6cm 4cm 4cm 3.5cm]{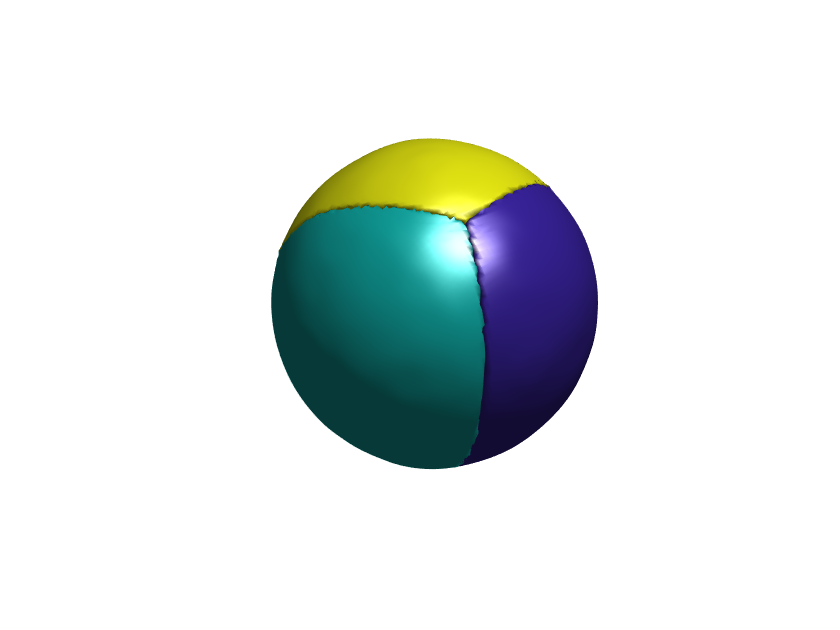}}&  
		\raisebox{-.5\height}{\includegraphics[width=0.12\textwidth, clip, trim = 6cm 4cm 4cm 3.5cm]{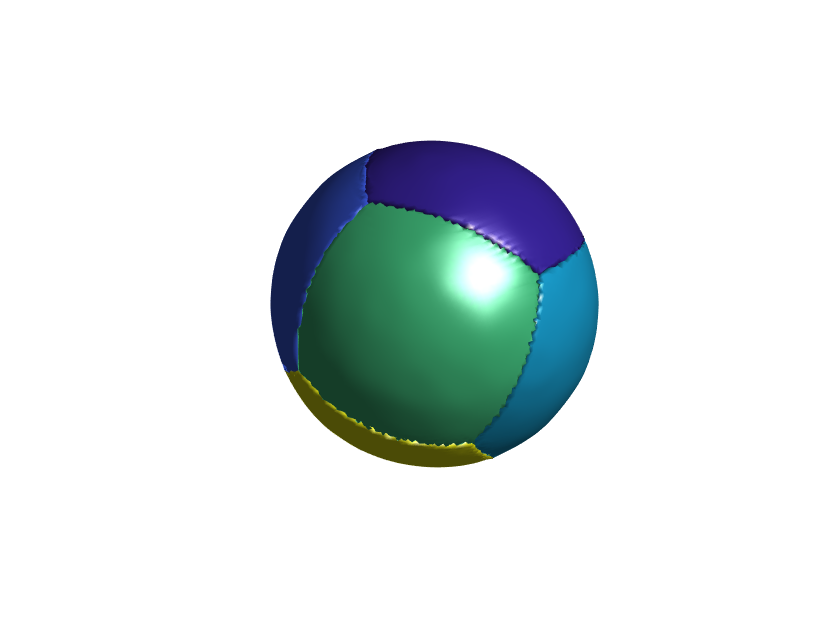}}&
		\raisebox{-.5\height}{\includegraphics[width=0.12\textwidth, clip, trim = 6cm 4cm 4cm 3.5cm]{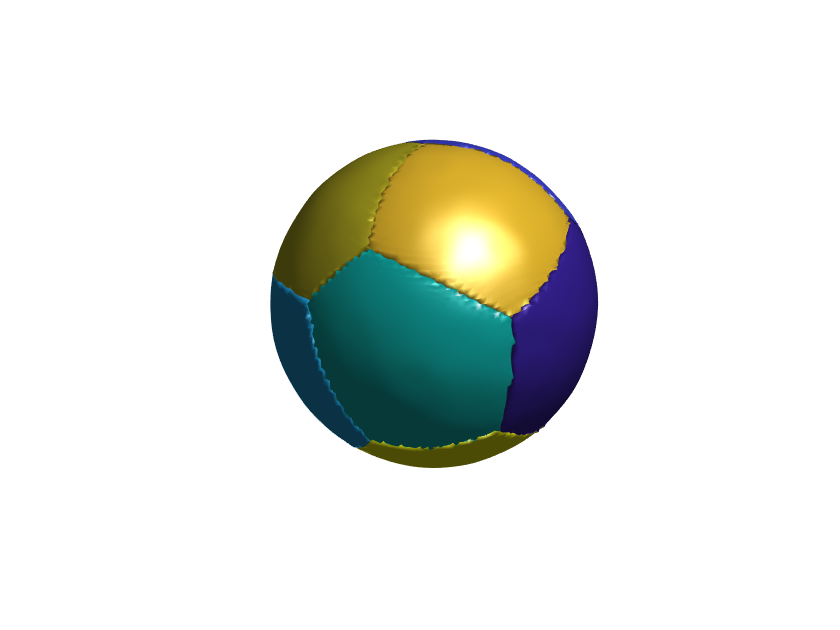}}&
		\raisebox{-.5\height}{\includegraphics[width=0.12\textwidth, clip, trim = 6cm 4cm 4cm 3.5cm]{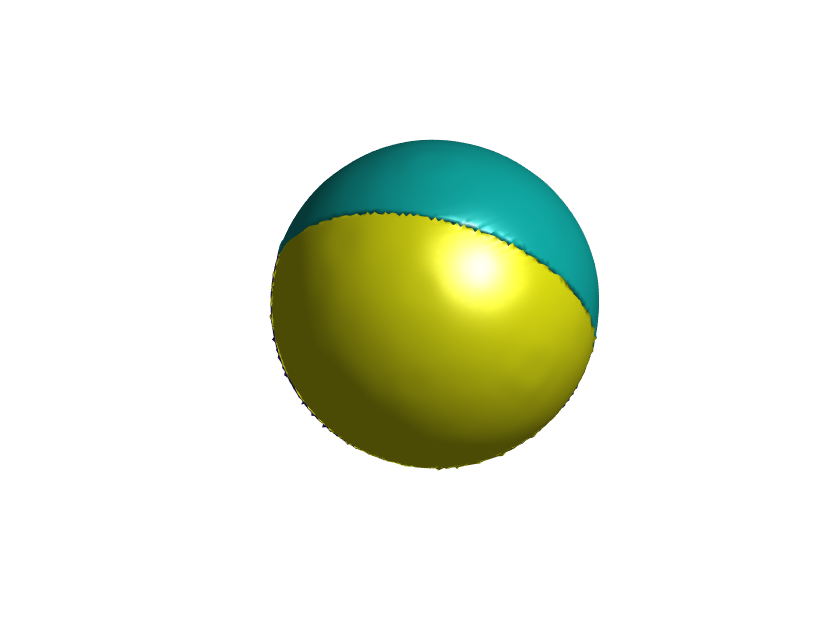}}&
		\raisebox{-.5\height}{\includegraphics[width=0.12\textwidth, clip, trim = 6cm 4cm 4cm 3.5cm]{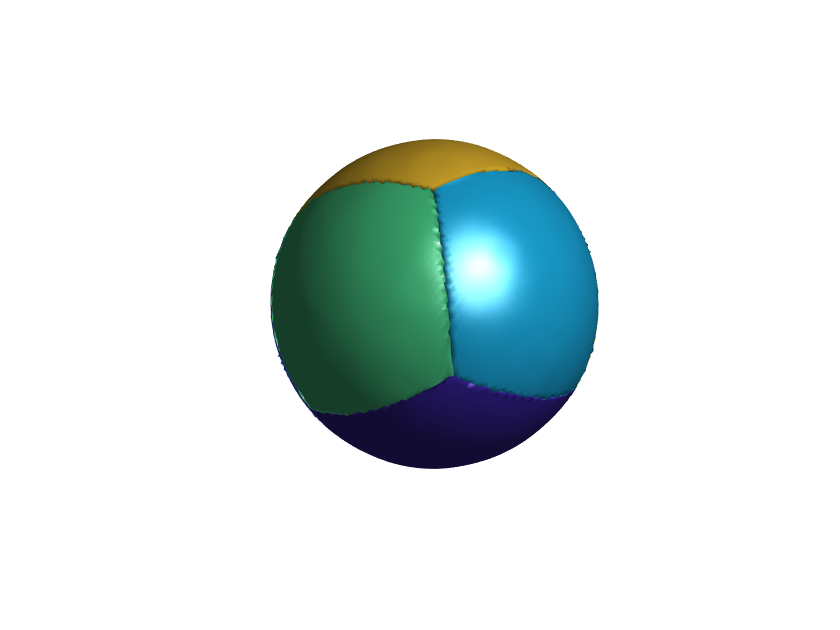}}&
        \raisebox{-.5\height}{\includegraphics[width=0.12\textwidth, clip, trim = 6cm 4cm 4cm 3.5cm]{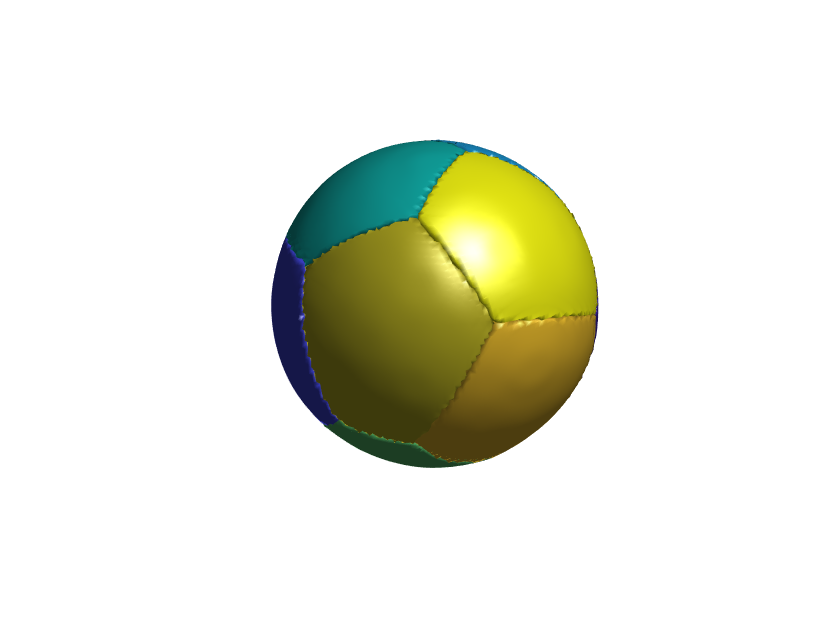}}
         \\ 
        \hline
        \makecell{An \\ expanded \\view} &
		\raisebox{-.5\height}{\includegraphics[width=0.12\textwidth, clip, trim = 4cm 2.5cm 3cm 2cm]{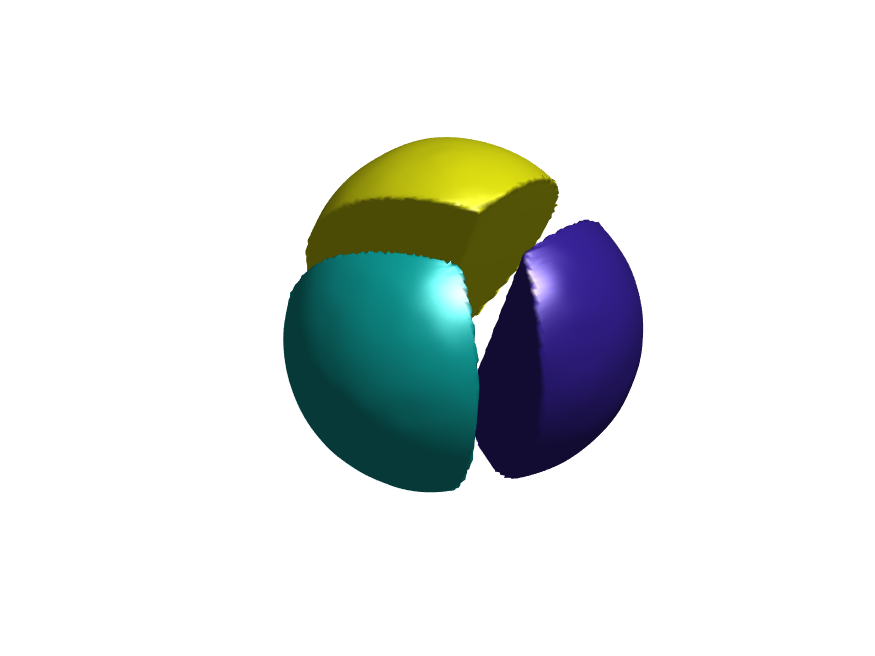}}&  
		\raisebox{-.5\height}{\includegraphics[width=0.12\textwidth, clip, trim = 4cm 2.5cm 3cm 2cm]{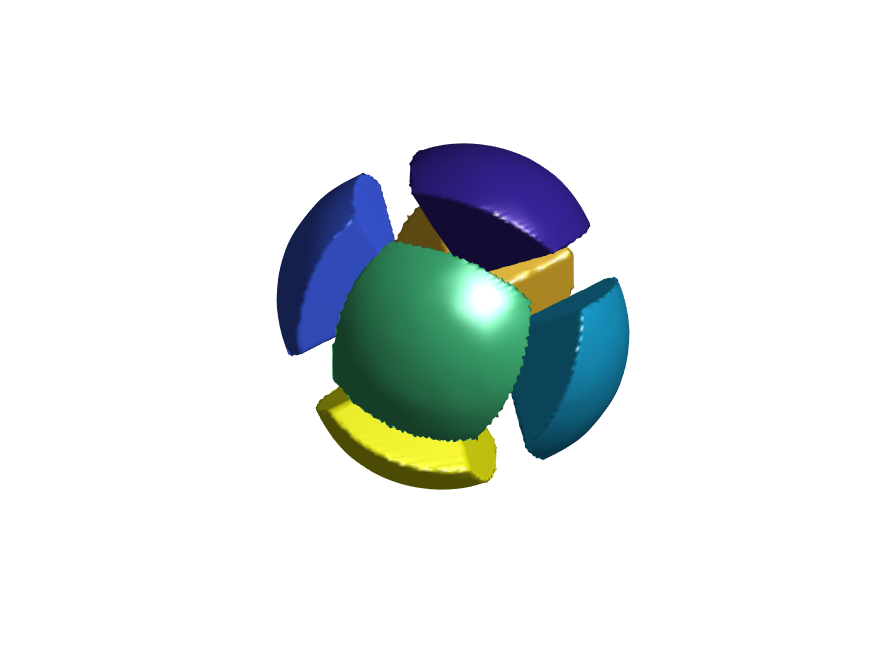}}&
		\raisebox{-.5\height}{\includegraphics[width=0.12\textwidth, clip, trim = 4cm 2.5cm 3cm 2cm]{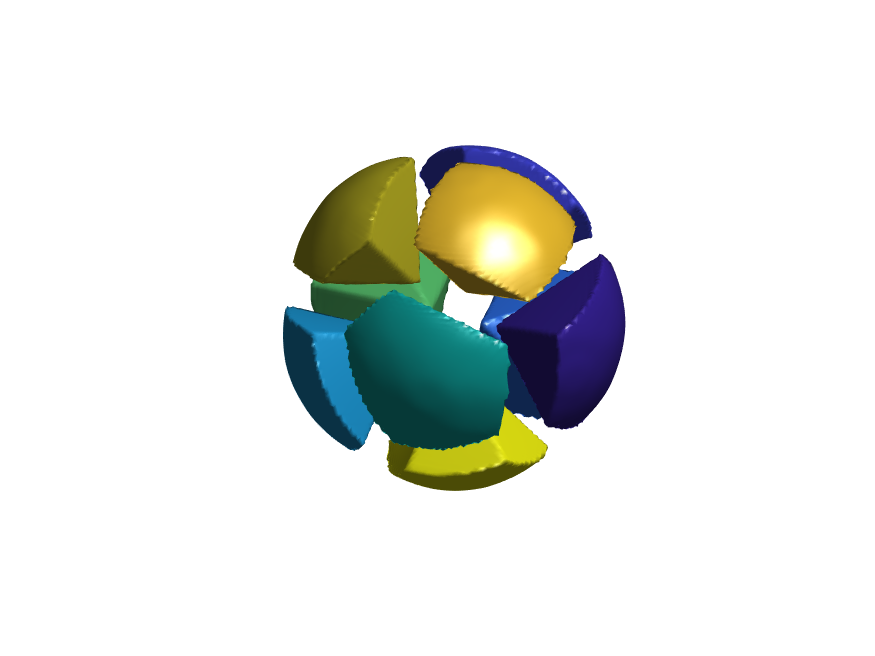}}&
		\raisebox{-.5\height}{\includegraphics[width=0.12\textwidth, clip, trim = 4cm 2.5cm 3cm 2cm]{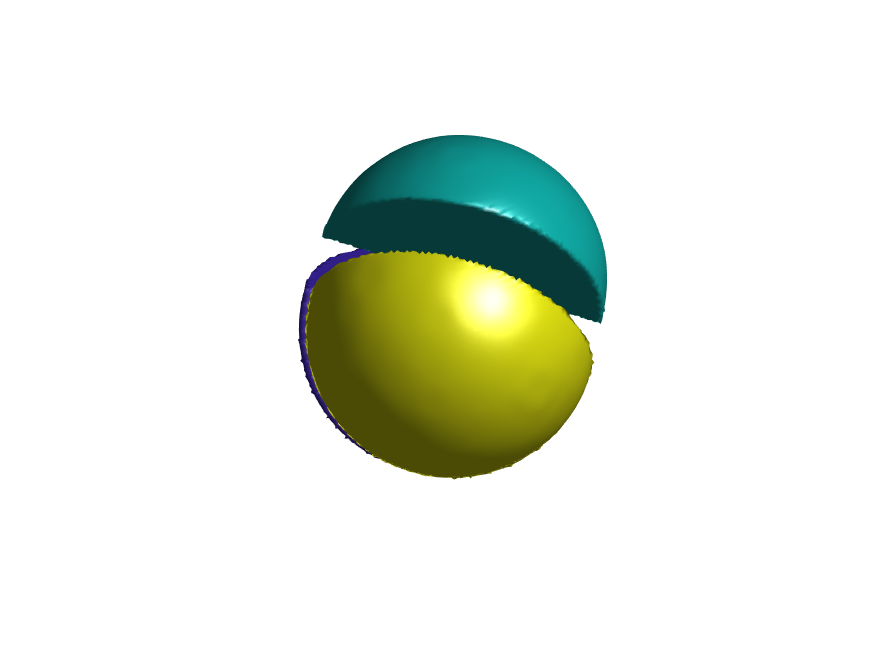}}&
		\raisebox{-.5\height}{\includegraphics[width=0.12\textwidth, clip, trim = 4cm 2.5cm 3cm 2cm]{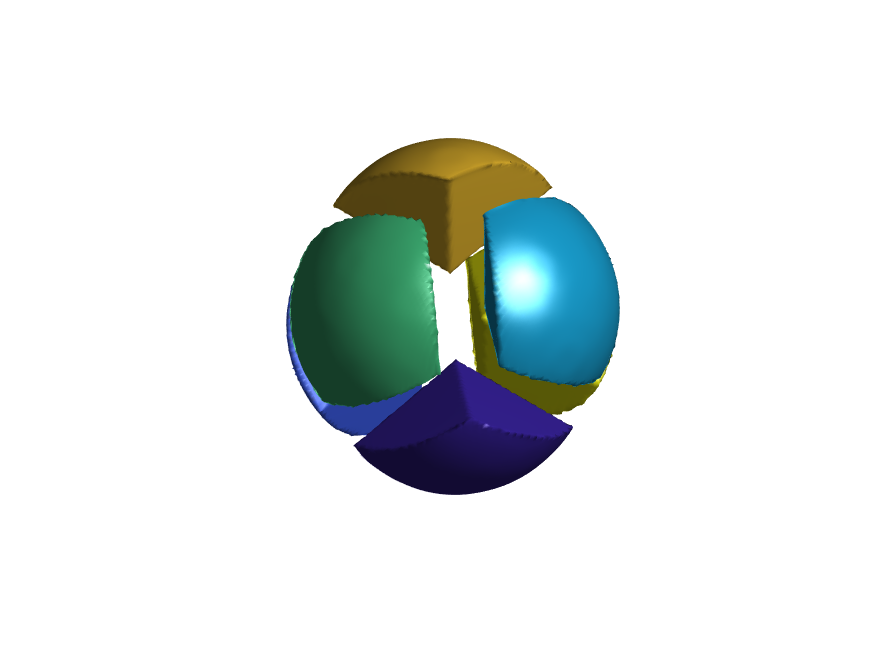}}&
        \raisebox{-.5\height}{\includegraphics[width=0.12\textwidth, clip, trim = 4cm 2.5cm 3cm 2cm]{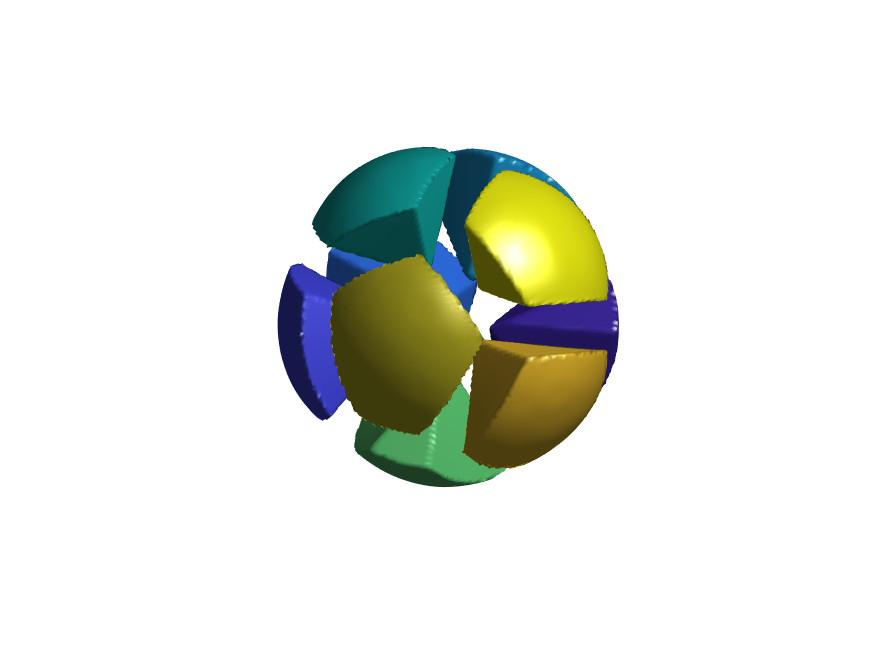}}
         \\ 
        \hline
        \makecell{A \\ translucent \\ expanded \\ view} &
		\raisebox{-.5\height}{\includegraphics[width=0.12\textwidth, clip, trim = 4cm 2.5cm 3cm 2cm]{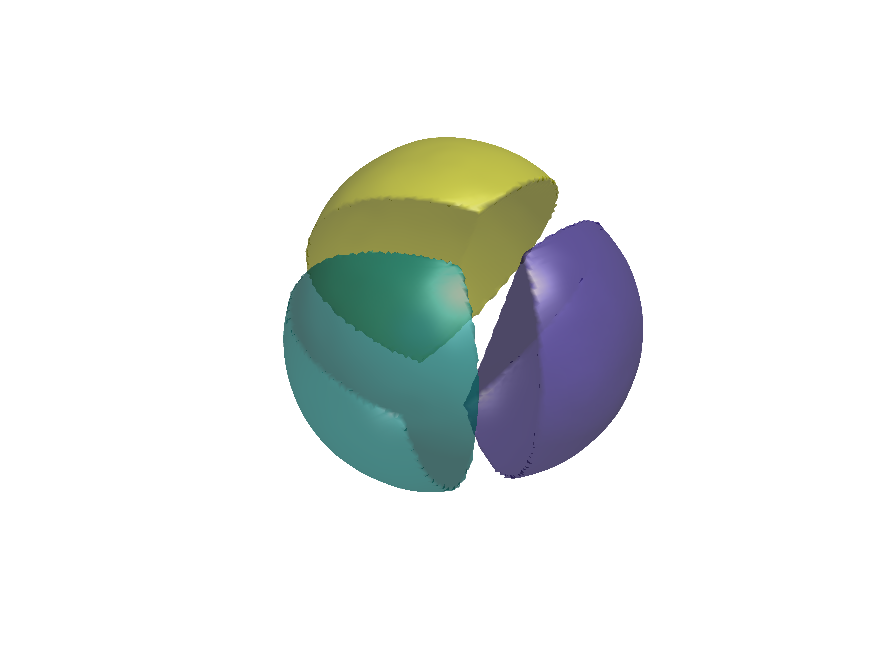}}&  
		\raisebox{-.5\height}{\includegraphics[width=0.12\textwidth, clip, trim = 4cm 2.5cm 3cm 2cm]{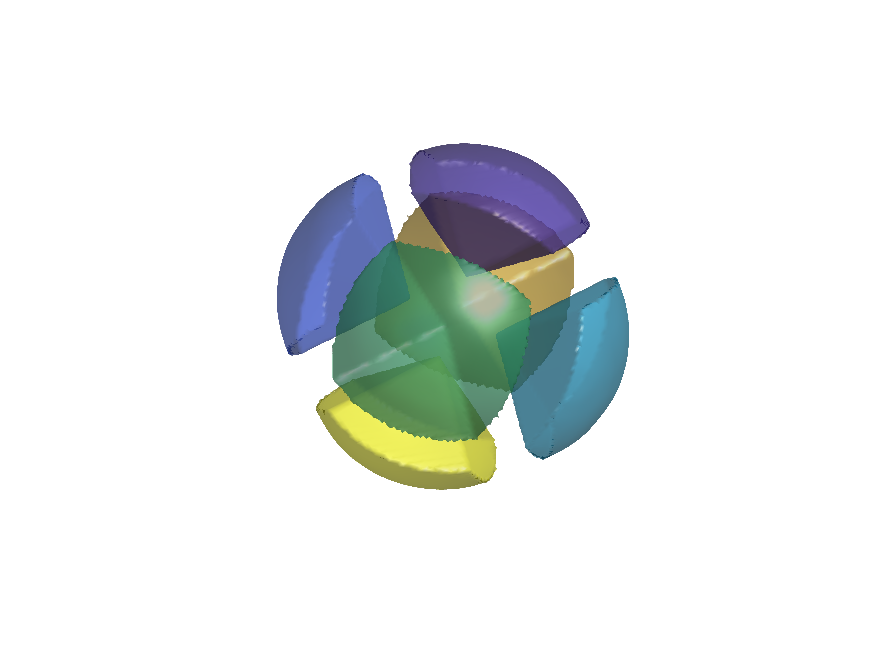}}&
		\raisebox{-.5\height}{\includegraphics[width=0.12\textwidth, clip, trim = 4cm 2.5cm 3cm 2cm]{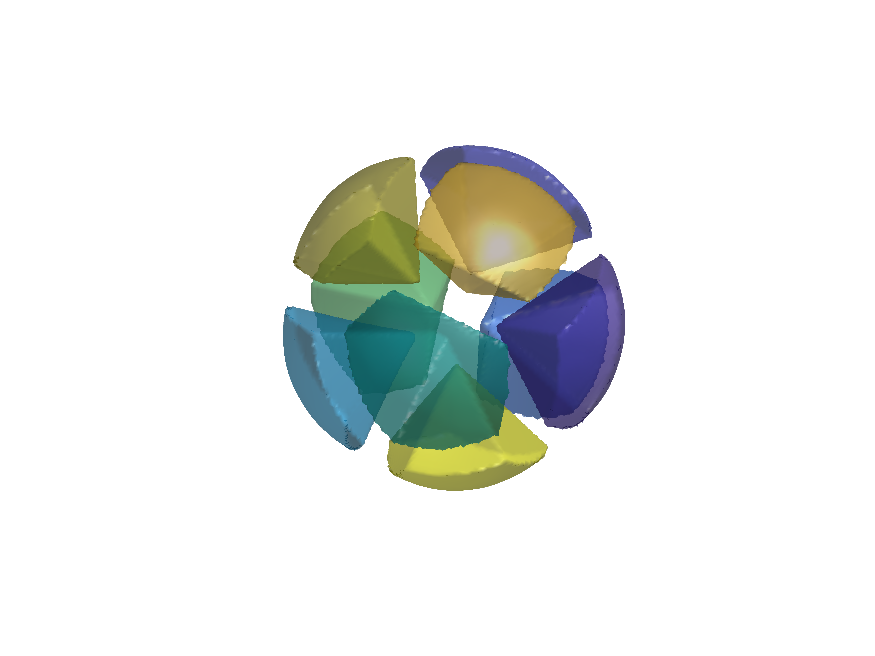}}&
		\raisebox{-.5\height}{\includegraphics[width=0.12\textwidth, clip, trim = 4cm 2.5cm 3cm 2cm]{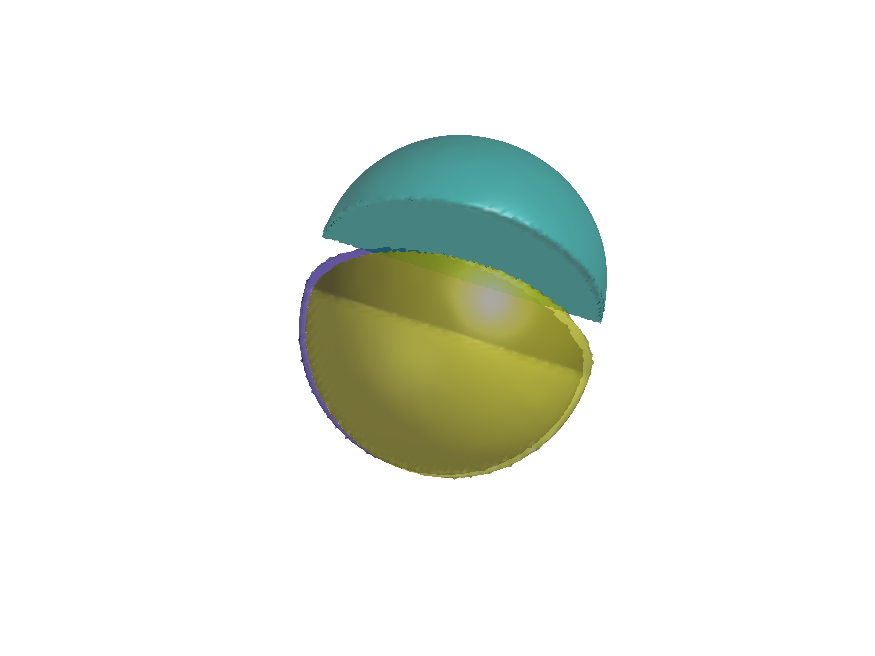}}&
		\raisebox{-.5\height}{\includegraphics[width=0.12\textwidth, clip, trim = 4cm 2.5cm 3cm 2cm]{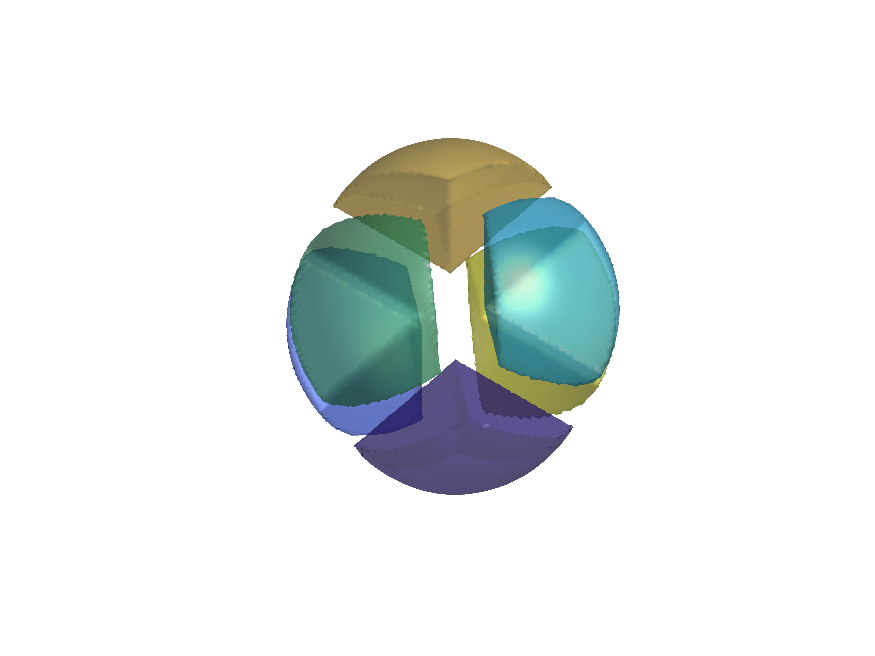}}&
        \raisebox{-.5\height}{\includegraphics[width=0.12\textwidth, clip, trim = 4cm 2.5cm 3cm 2cm]{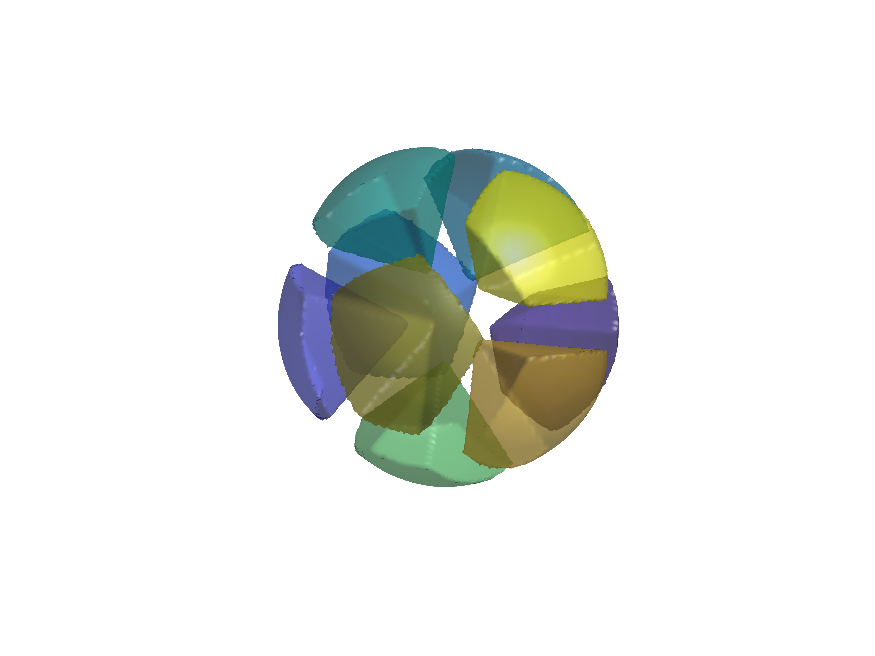}}
         \\ 
        \hline
	\end{tabular}
	\caption{Uniform partitions in three dimensions. The initial condition is set to be a cube. The total regions and their shortest partitions of both methods in the final iteration (together with an expanded view and a translucent expanded view) are plotted for three-partition case, six-partition case and nine-partition case.  }\label{fig: multi-partition 3d}
\end{figure}

In Figure \ref{fig: multi-partition 3d}, the solutions of the total region after convergence by the first method is consistent with the solutions of the second method. They both give a ball as the optimal solution for different numbers of partitions. This numerically supports the conjecture that the ball is the maximizer of the longest minimal length partitions problem in three dimensions. 

The evolution of approximate objective functionals for each experiment are shown in Figure \ref{fig:energy multi-partition 3d}. As the number of partitions increases, the approximate objective functional of final total region also increases. In each case, the objective functional by the first method is consistent to the objective functional by the second method. 


The CPU time of both methods with different numbers of partitions are given in Table \ref{tab:cpu time 3d}. Again, the second method uses more iterations but less GPU time compared to the first method.

\begin{figure}[t!]
    \centering
    \subfigure[First method with three partitions]{\includegraphics[width=0.32\textwidth, clip, trim = 1.1cm 0.8cm 0.5cm 0.5cm]{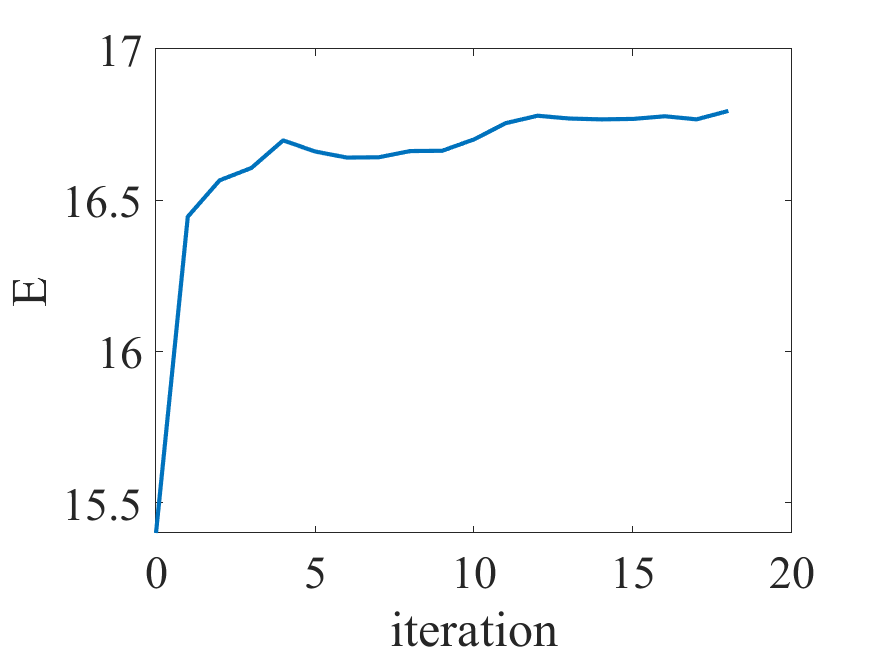}}
    \subfigure[First method with six partitions]{\includegraphics[width=0.32\textwidth, clip, trim = 1.1cm 0.8cm 0.5cm 0.5cm]{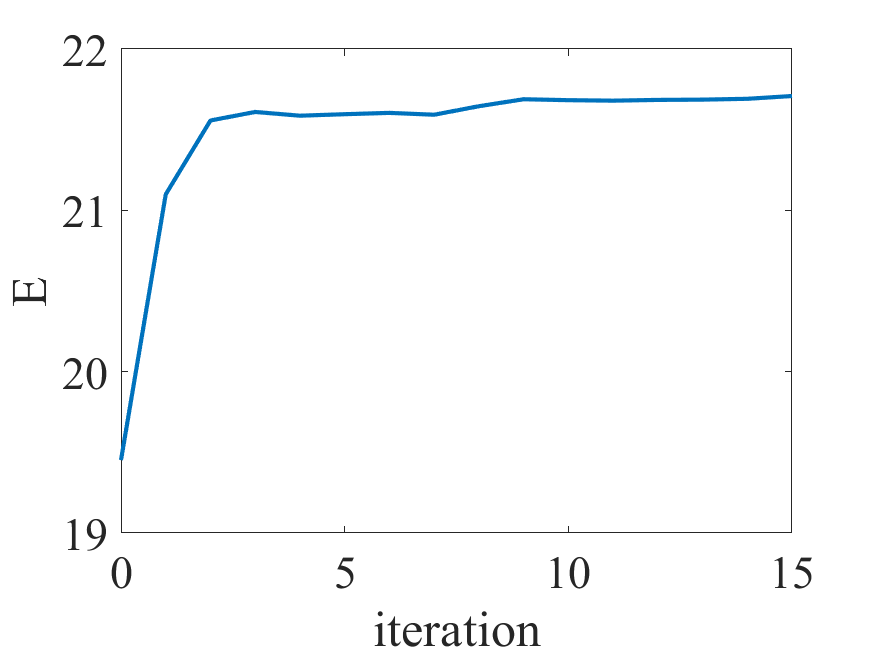}}
    \subfigure[First method with nine partitions]{\includegraphics[width=0.32\textwidth, clip, trim = 1.1cm 0.8cm 0.5cm 0.5cm]{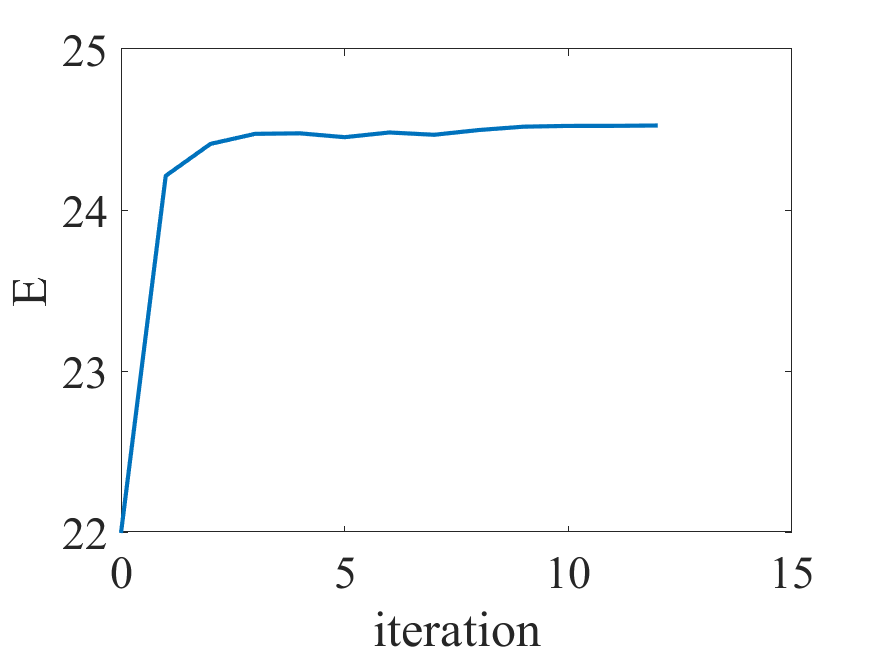}}
    \subfigure[Second method with three partitions]{\includegraphics[width=0.32\textwidth, clip, trim = 1.1cm 0.8cm 0.5cm 0.5cm]{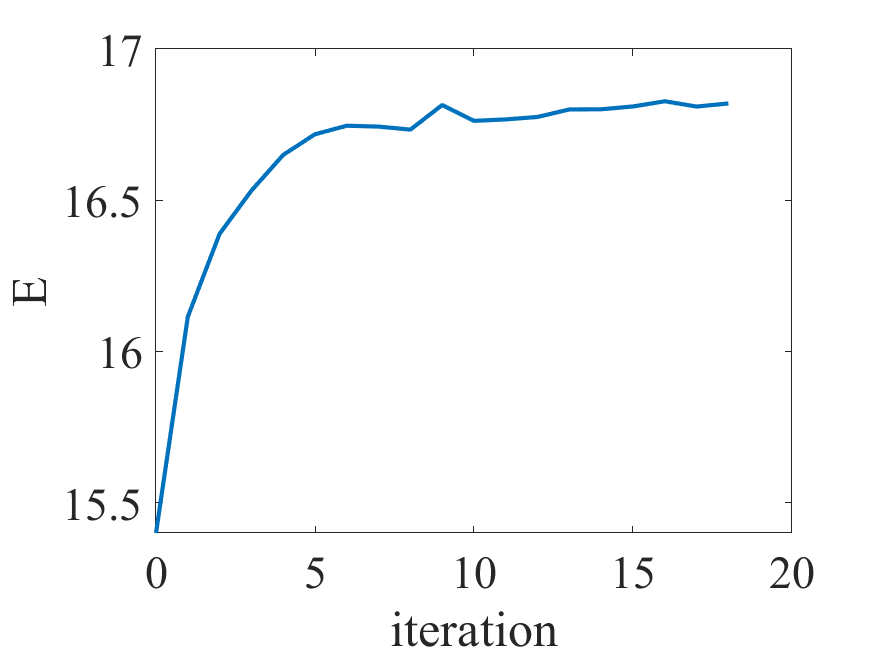}}
    \subfigure[Second method with six partitions]{\includegraphics[width=0.32\textwidth, clip, trim = 1.1cm 0.8cm 0.5cm 0.5cm]{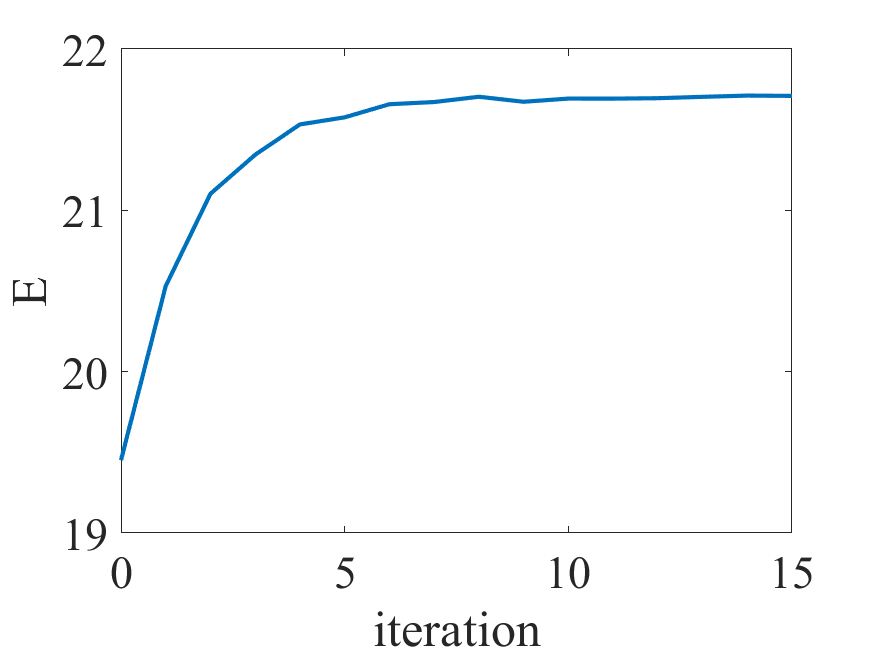}}
    \subfigure[Second method with nine partitions]{\includegraphics[width=0.32\textwidth, clip, trim = 1.1cm 0.8cm 0.5cm 0.5cm]{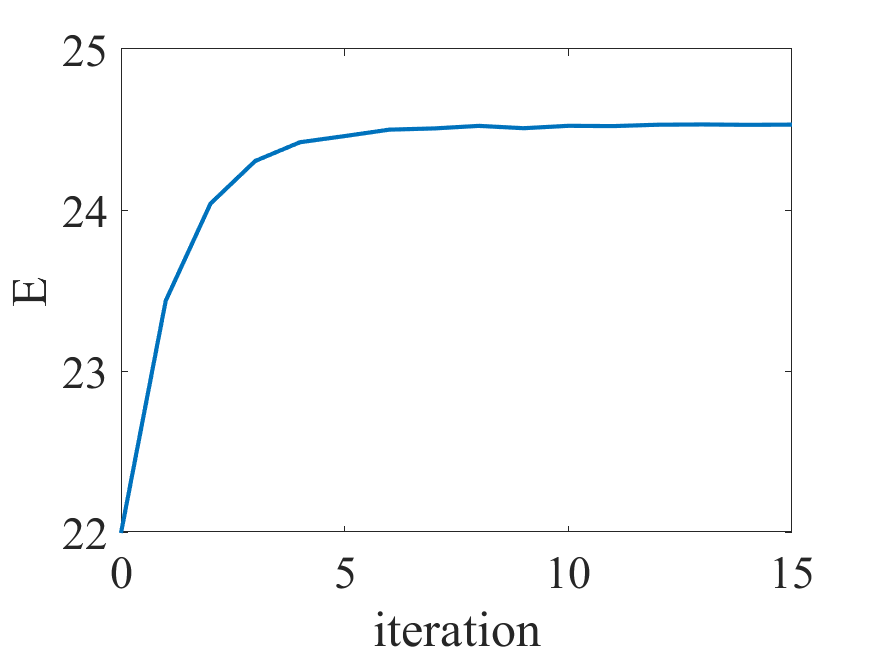}}
    \caption{Uniform partitions in three dimensions. The approximate objective functionals $\Tilde{E}_{\tau}(u_{\Omega}^{k},(u_i^k)_{i=1}^{n})$ of the two methods versus iteration times. The horizontal axis represents the number of iterations. The vertical axis represents the functional value. }
    \label{fig:energy multi-partition 3d}
\end{figure}


\begin{table}[t!]
    \centering
    \begin{tabular}{|c|c|c|c|c|}
    \hline
    \multirow{2}{*}{No. of partitions}& \multicolumn{2}{c|}{No. of Iter.} & \multicolumn{2}{c|}{CPU time(s)}\\
    \cline{2-5}
    & First method & Second method& First method & Second method\\
    \hline
      3 &18&18&  10678.05 & 2768.09 \\
      6 &15&15& 11053.22 & 4260.25 \\
      9 &12&15& 15129.92 & 6282.31 \\
      \hline
    \end{tabular}
    \caption{Uniform partitions in three dimensions. Number of iterations and CPU times of both methods.}
    \label{tab:cpu time 3d}
\end{table}

\section{An objective-functional-monotone method}\label{sec:monotone}

Based on the first method, we further develop a method that promises the approximate objective functional is increasing during iterations. The idea is to check whether $\Tilde{E}_{\tau}(u_{\Omega}^{k+1},(u_i^{k+1})_{i=1}^{n})$ is greater than or equal to $\Tilde{E}_{\tau}(u_{\Omega}^{k},(u_i^k)_{i=1}^{n})$ in each step. If not, then $(u_i^k)_{i=1}^{n}$ is possible to be a local minimizer instead of global shortest partition, or $u_{\Omega}^{k}$ is near the optimal solution. In this case, we just reduce $\beta$ from $1$, get new $u_{\Omega}^{k+1},(u_i^{k+1})_{i=1}^{n}$, and check if the objective functional is increasing. If the objective functional is increasing, then the iterations keep going. Otherwise, $\beta$ is further reduced, and new $u_{\Omega}^{k+1},(u_i^{k+1})_{i=1}^{n}$ are computed to compare the objective functional. If $u_{\Omega}^{k+1},(u_i^{k+1})_{i=1}^{n}$ are not found to increase the objective functional, and $\beta$ is reduced to $0$, then $(u_i^k)_{i=1}^{n}$ is not a global shortest partition, or $u_{\Omega}^{k}$ is likely to be the optimal total region. To check which case it is, auction dynamics is repeated multiple times to find a partition of $u_{\Omega}^{k}$ with less objective functional than $(u_i^k)_{i=1}^{n}$. If such partition is found, then it is set to be the new $(u_i^k)_{i=1}^{n}$, and we take a step back to check whether $\Tilde{E}_{\tau}(u_{\Omega}^{k},(u_i^{k})_{i=1}^{n})$ is greater than or equal to $\Tilde{E}_{\tau}(u_{\Omega}^{k-1},(u_i^{k-1})_{i=1}^{n})$. If such partition is not found, then we accept $u_{\Omega}^{k}$ to be the result, and $(u_i^k)_{i=1}^{n}$ is its shortest partition.

This method is tested for two-dimensional examples with two and three partitions and equal proportions. The total regions and their shortest partitions of both cases in some iterations are shown in Figure \ref{fig: energy monotone result}. The approximate objective functionals are shown in Figure \ref{fig:energy monotone}. For two-partition case, this method uses 12 iterations and 3147.61 seconds. For three-partition case, this method uses 19 iterations and 3896.23 seconds. Although the method promises the objective functional is increasing, it spends ten times more CPU time than the first method for the two-partition case. This is because the new method uses a lot of time (many repetition of auction dynamics) for checking whether $u^k_{\Omega}$ and $(u^k_{i})_{i=1}^n$ are global optimizers.

\begin{figure}[t!]
	\centering
	\begin{tabular}{|c|c|c|c|c|c|c|}
		\hline 
		 Iterations& 2  & 4 & 6  & 8  & 10  & 12 \\
        \hline
        \makecell{Two\\-partition\\case}  &
		\raisebox{-.5\height}{\includegraphics[width=0.1\textwidth, clip, trim = 4cm 2.5cm 3cm 1.5cm]{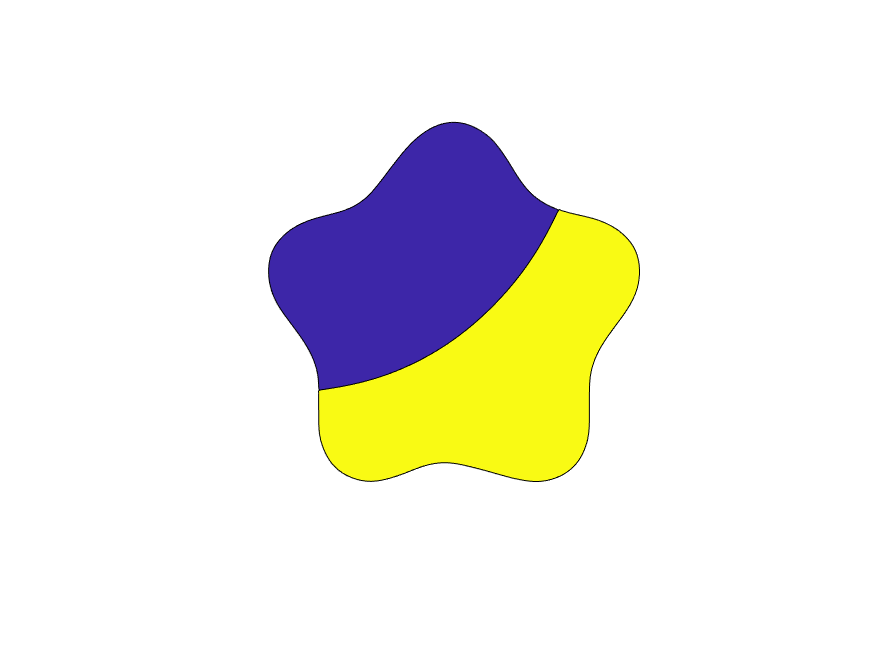}}&  
		\raisebox{-.5\height}{\includegraphics[width=0.1\textwidth, clip, trim = 4cm 2.5cm 3cm 1.5cm]{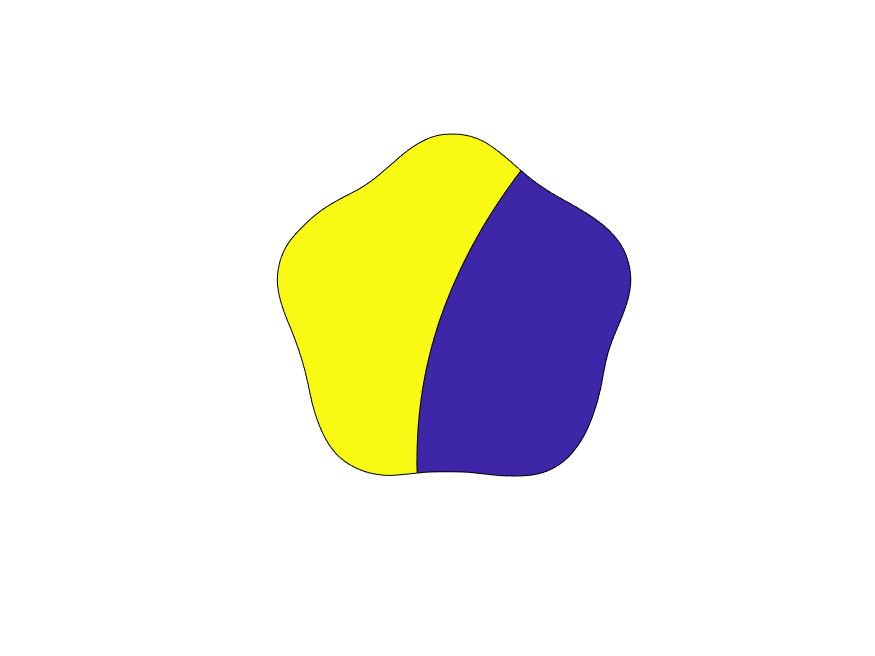}}&
		\raisebox{-.5\height}{\includegraphics[width=0.1\textwidth, clip, trim = 4cm 2.5cm 3cm 1.5cm]{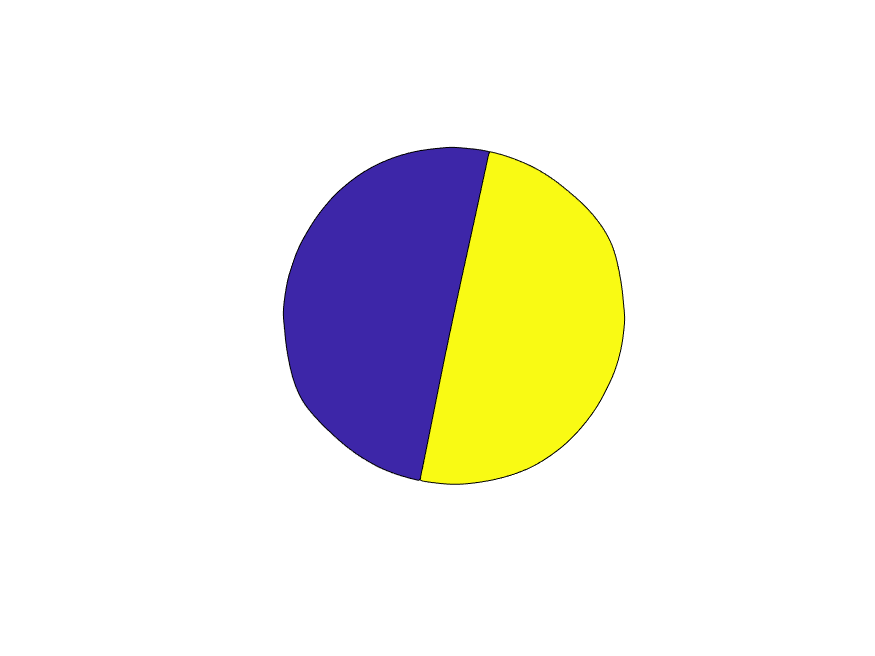}}&
		\raisebox{-.5\height}{\includegraphics[width=0.1\textwidth, clip, trim = 4cm 2.5cm 3cm 1.5cm]{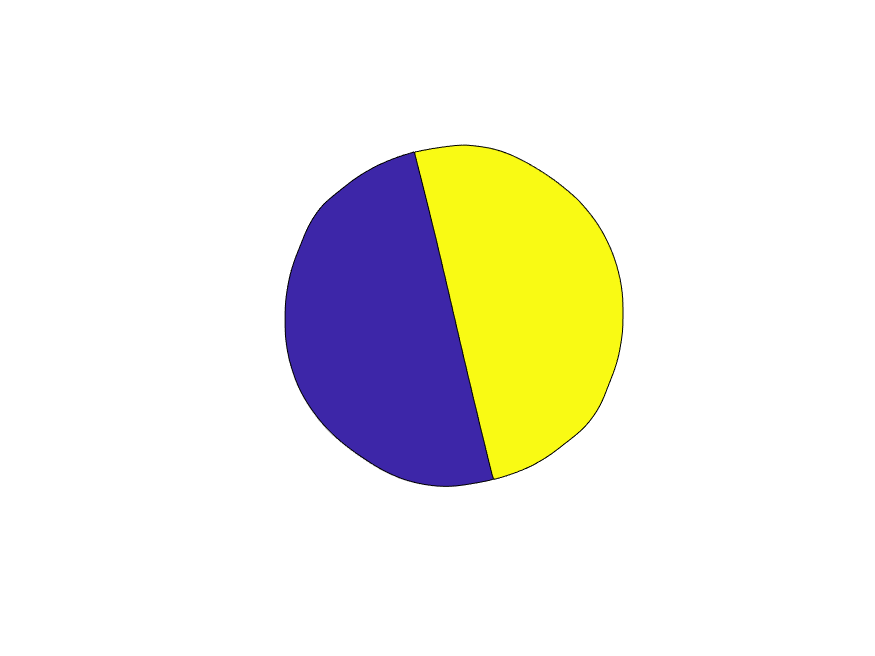}}&
  	\raisebox{-.5\height}{\includegraphics[width=0.1\textwidth, clip, trim = 4cm 2.5cm 3cm 1.5cm]{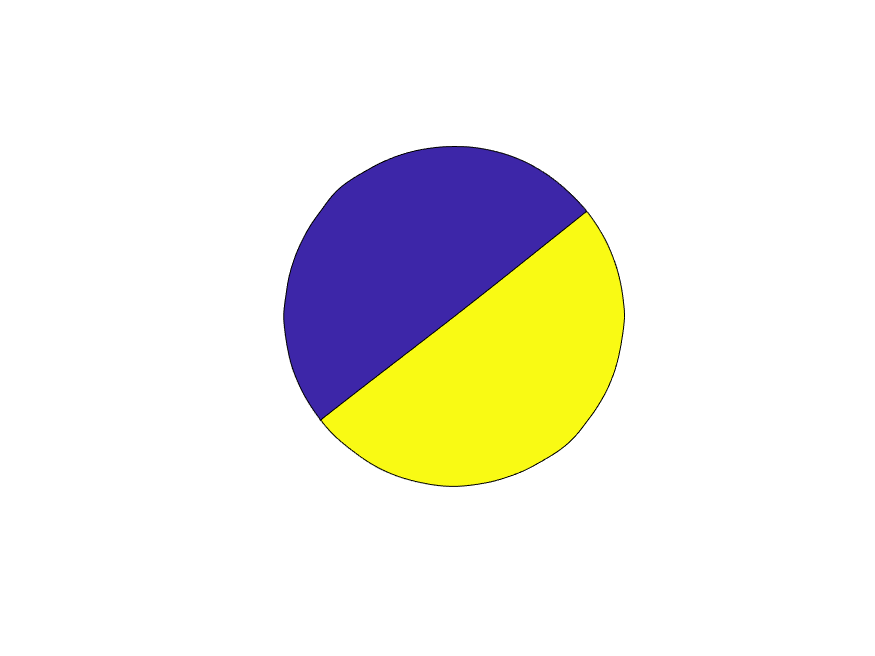}}&
   		\raisebox{-.5\height}{\includegraphics[width=0.1\textwidth, clip, trim = 4cm 2.5cm 3cm 1.5cm]{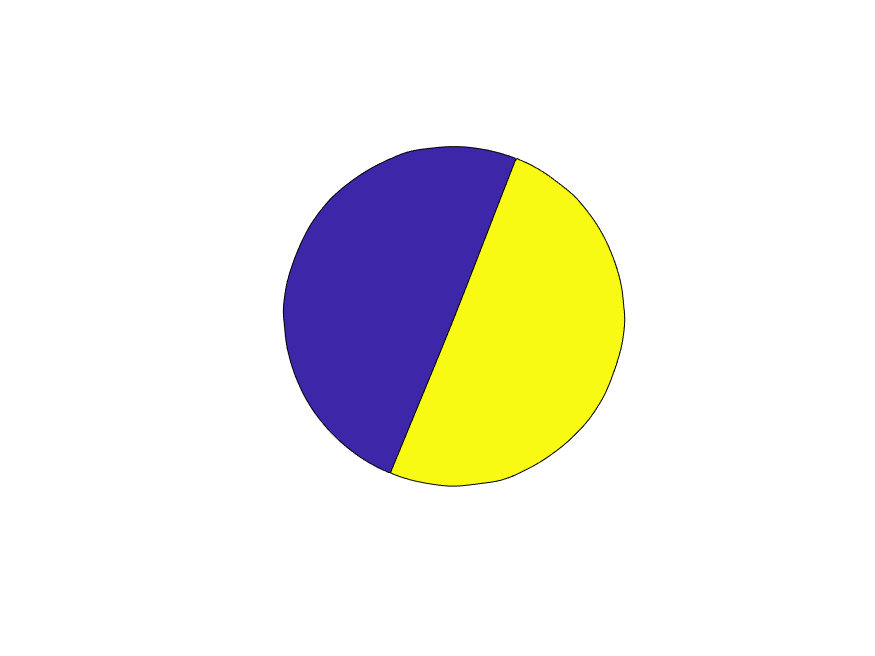}}\\
		\hline 
		 Iterations& 2 & 4 & 8 & 12 & 16 & 19\\
     \hline
        \makecell{Three-\\partition\\case}  &
		\raisebox{-.5\height}{\includegraphics[width=0.1\textwidth, clip, trim = 4cm 2.5cm 3cm 1.5cm]{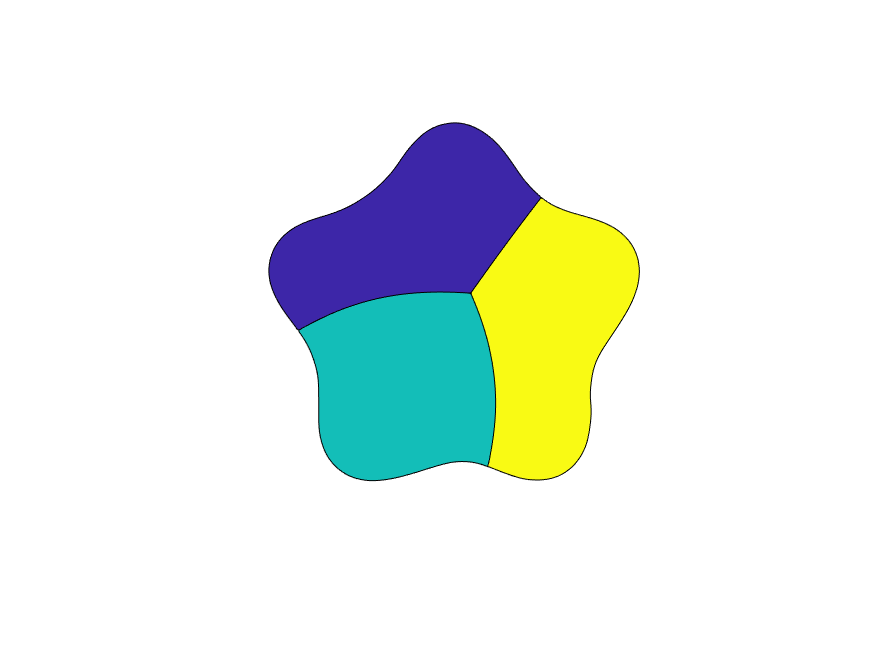}}&  
		\raisebox{-.5\height}{\includegraphics[width=0.1\textwidth, clip, trim = 4cm 2.5cm 3cm 1.5cm]{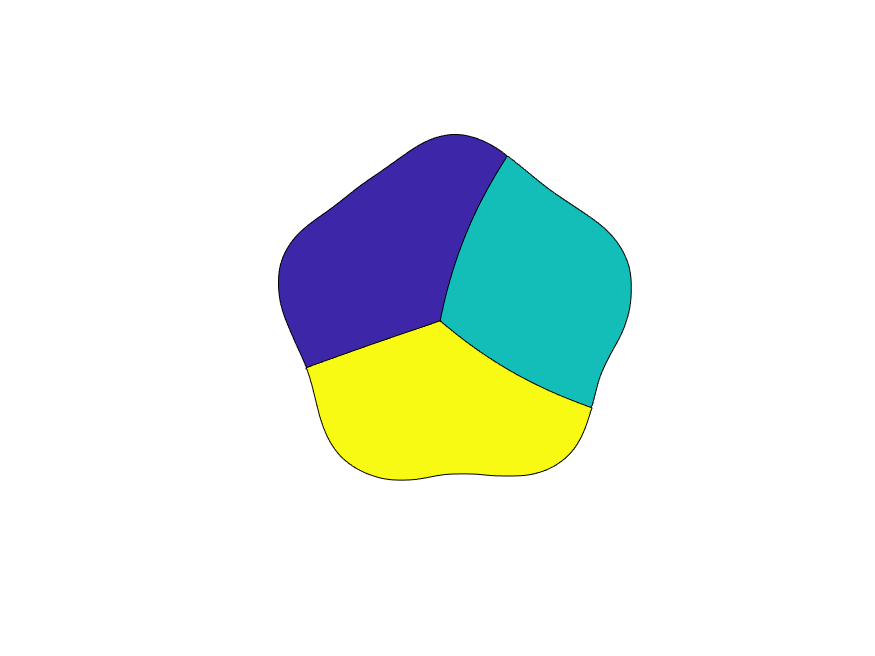}}&
		\raisebox{-.5\height}{\includegraphics[width=0.1\textwidth, clip, trim = 4cm 2.5cm 3cm 1.5cm]{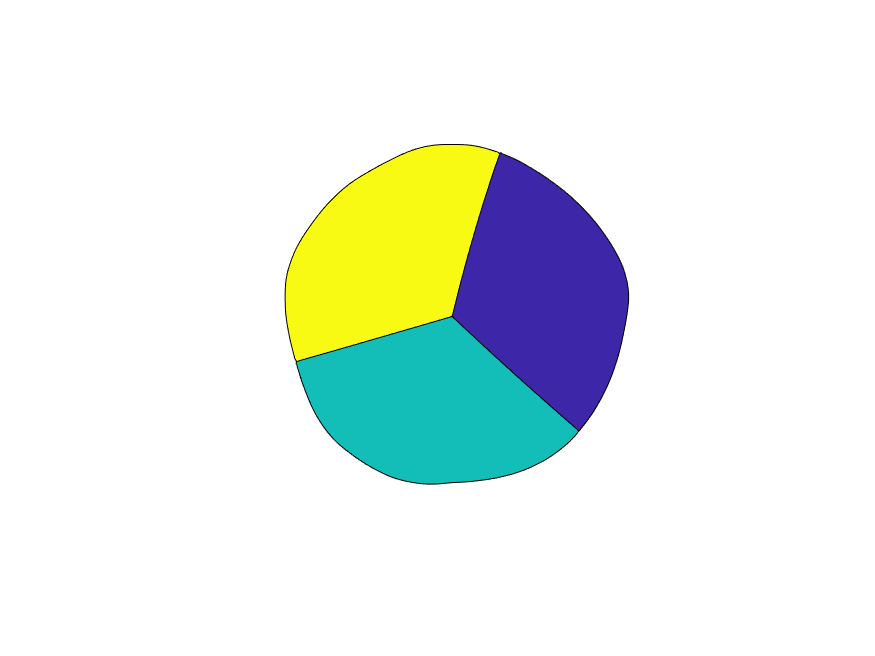}}&
		\raisebox{-.5\height}{\includegraphics[width=0.1\textwidth, clip, trim = 4cm 2.5cm 3cm 1.5cm]{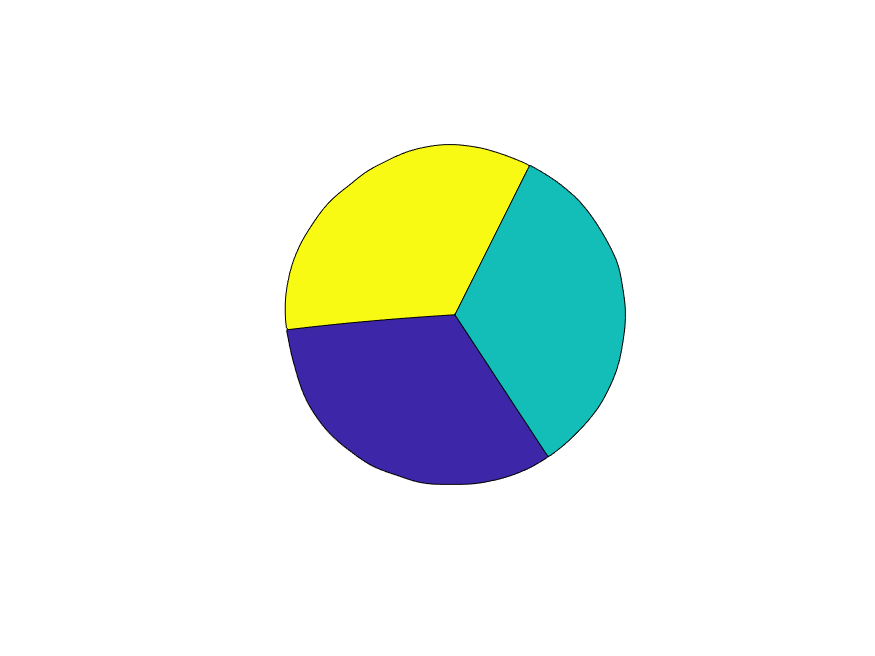}}&
		\raisebox{-.5\height}{\includegraphics[width=0.1\textwidth, clip, trim = 4cm 2.5cm 3cm 1.5cm]{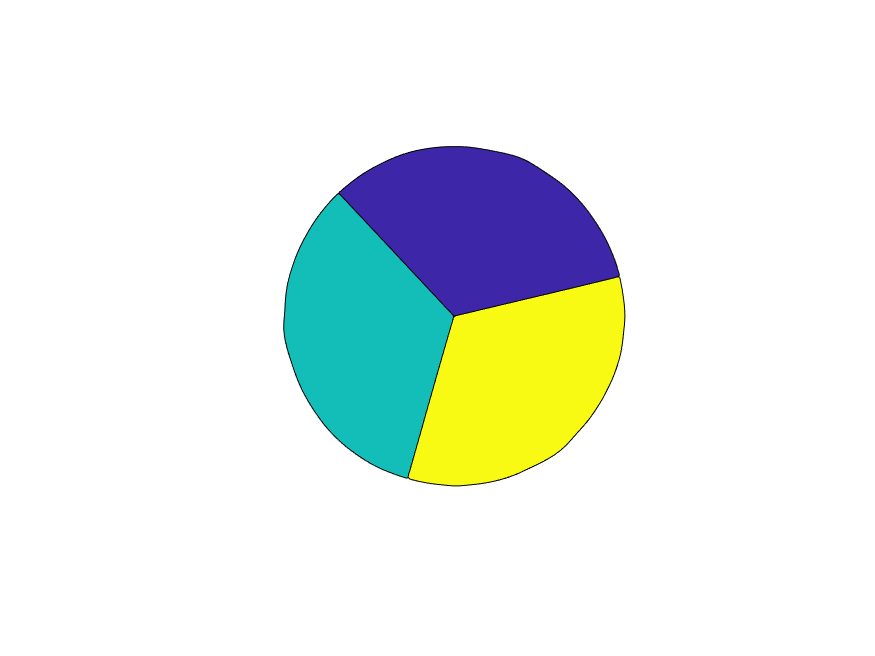}}&
		\raisebox{-.5\height}{\includegraphics[width=0.1\textwidth, clip, trim = 4cm 2.5cm 3cm 1.5cm]{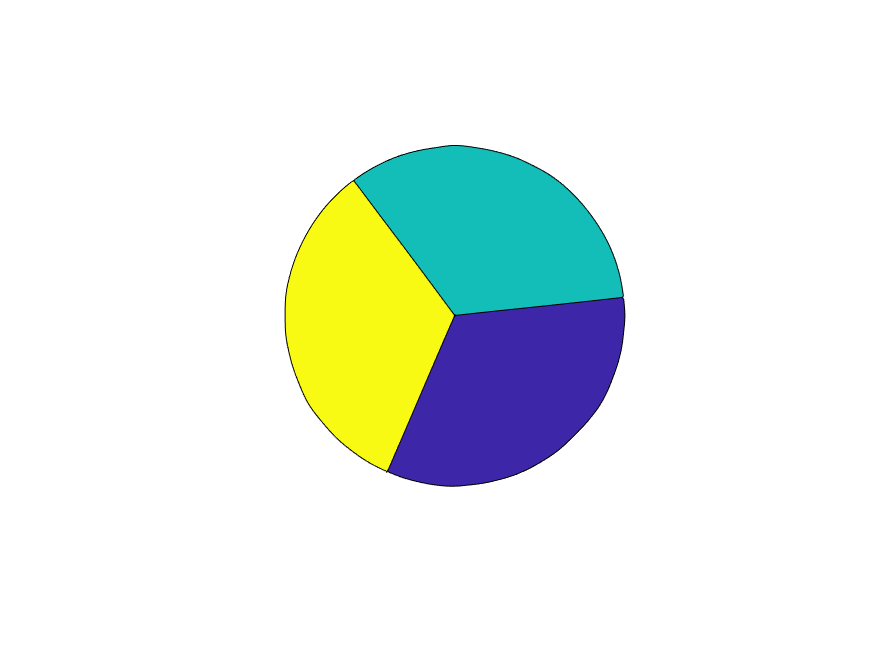}}\\
		\hline
	\end{tabular}
	\caption{Objective-functional-monotone method. The initial condition is set to be a "five-petal flower". The total regions and their shortest partitions of the objective functional monotone method in some iterations are plotted for two-partition and three-partition cases with equal proportions.} \label{fig: energy monotone result}
\end{figure}

\begin{figure}[t!]
    \centering
    \subfigure[Two-partition case]{ \includegraphics[width=0.49\textwidth, clip, trim = 1.1cm 0.5cm 0.5cm 0.5cm]{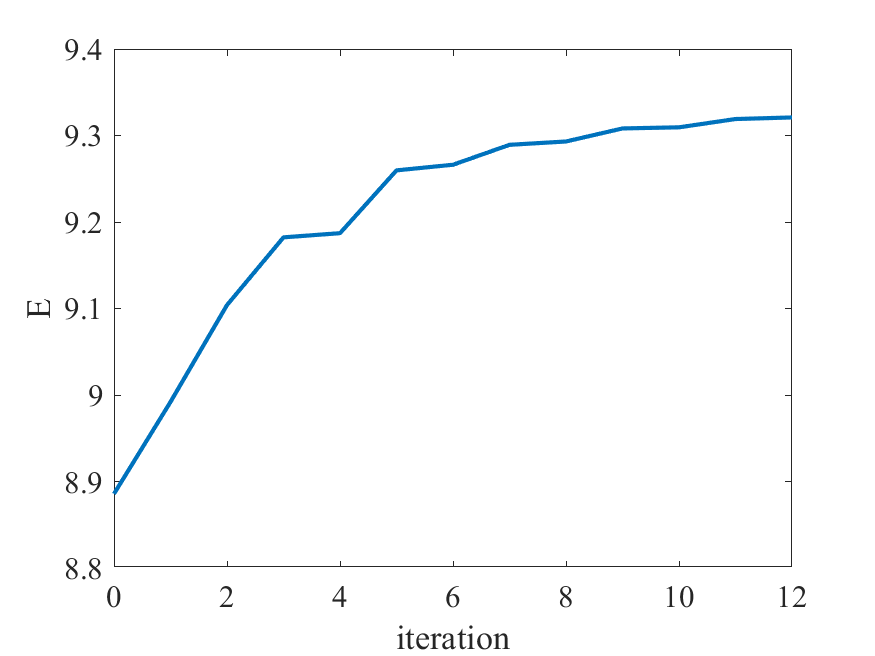}}
    \subfigure[Three-partition case]{\includegraphics[width=0.49\textwidth, clip, trim = 0.8cm 0.5cm 0.5cm 0.5cm]{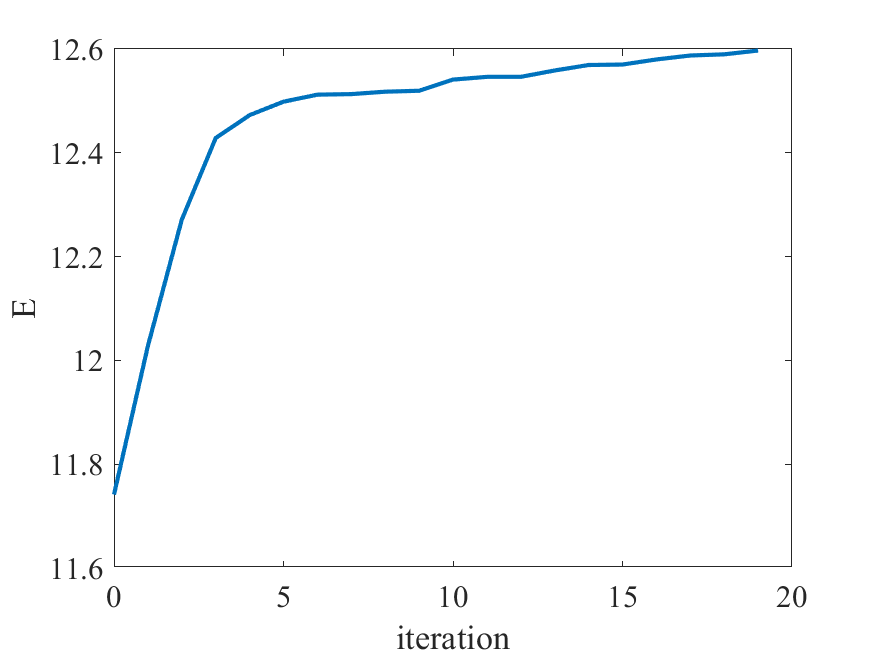}}
    \caption{Objective-functional-monotone method for uniform partitions in two dimensions. The approximate objective functionals $\Tilde{E}_{\tau}(u_{\Omega}^{k},(u_i^k)_{i=1}^{n})$ of both cases versus iteration times are plotted. The horizontal axis represents the number of iterations. The vertical axis represents the functional value.}
    \label{fig:energy monotone}
\end{figure}

\section{Conclusions}
\label{sec:conclusion}

In this paper, two numerical methods are derived for the simulation of longest minimal length partitions problem. The methods are based on approximations of the objective functional by a short time heat flow, threshold dynamics, and splitting methods to solve the relative constrained max-min optimization problem. In both methods, auction dynamics is used to find the shortest partitions, and the update of total region is based on threshold dynamics. The shortest partitions and total region are updated in an alternative manner. To reduce the influence of local minimal partitions, the first method repeats auction dynamics several times in each iteration, and the second method introduces a proximal regularization term, which is shown to be equivalent to a gradient ascent flow. Both methods update the total region by introducing an update step-length, which decreases gradually according to some criterion during iterations.
The effectiveness of both methods are demonstrated by comprehensive numerical experiments, which validate the conjecture that the disc is the minimizer for two-dimensional problems and the ball is the minimizer for three-dimensional problems. 

This paper focuses on the simulation to an approximate problem to provide the numerical evidence to the conjecture for the longest minimal length partitions problem. It would be interesting to consider the proof of the conjecture in the sense of the approximate objective functional and further consider the limit as $\tau$ goes to $0$.
It would be also interesting to use the level-set-based framework \cite{osher1988fronts,merriman1994motion,chan2004level,lie2005piecewise} to approximate such problems.





\section*{Acknowledgements}
H. Liu was  partially supported by National Natural Science Foundation of China  12201530, HKRGC ECS 22302123, HKBU 179356. D. Wang was partially supported by National Natural Science Foundation of China (Grant No. 12101524), Guangdong Basic and Applied Basic Research Foundation (Grant No. 2023A1515012199) and Shenzhen Science and Technology Innovation Program (Grant No. JCYJ20220530143803007, RCYX20221008092843046), Hetao Shenzhen-Hong Kong Science and Technology Innovation Cooperation Zone Project (No.HZQSWS-KCCYB-2024016) and the Guangdong Key Lab of Mathematical Foundations for Artificial Intelligence (2023B1212010001).

 \bibliographystyle{elsarticle-num} 
 \bibliography{refs}





\end{document}